\begin{document}

\title{Étale structures and the Joyal--Tierney representation theorem in countable model theory}
\author{Ruiyuan Chen}
\date{}
\maketitle

\begin{abstract}
An étale structure over a topological space $X$ is a continuous family of structures (in some first-order language) indexed over $X$.
We give an exposition of this fundamental concept from sheaf theory and its relevance to countable model theory and invariant descriptive set theory.
We show that many classical aspects of spaces of countable models can be naturally framed and generalized in the context of étale structures, including the Lopez-Escobar theorem on invariant Borel sets, an omitting types theorem, and various characterizations of Scott rank.
We also present and prove the countable version of the Joyal--Tierney representation theorem, which states that the isomorphism groupoid of an étale structure determines its theory up to bi-interpretability; and we explain how special cases of this theorem recover several recent results in the literature on groupoids of models and functors between them.
\let\thefootnote=\relax
\footnotetext{2020 \emph{Mathematics Subject Classification}:
    03E15, 
    03C15, 
    22A22. 
}
\footnotetext{\emph{Key words and phrases}:
    infinitary logic,
    étale,
    sheaf,
    Polish groupoid,
    Joyal--Tierney,
    interpretation,
    Lopez-Escobar.
}
\end{abstract}

\tableofcontents

\section{Introduction}
\label{sec:intro}

A standard technique in countable model theory associates, to each (possibly infinitary) first-order theory $\@T$ in some language $\@L$, a topological space $X$ parametrizing all of its countable models up to isomorphism.
This enables the application to model theory of powerful tools from topology, dynamics, descriptive set theory, and computability theory, such as Baire category techniques and Borel complexity theory.
See \cite[\S16.C]{Kcdst}, \cite{BKgrp}, \cite[\S3.6, \S11.4]{Gidst}, \cite{Mcpstr}.

In fact, there are several variants of such a ``topological space of models'' $X$.
Perhaps the best-known consists of all models on a \emph{fixed} countably infinite set such as $\#N$, regarded as a subspace of a Cantor space $X \subseteq \prod_i 2^{\#N^{n_i}}$ specifying the interpretations of each relation and function symbol in the language $\@L$; see \cite[\S3.6]{Gidst}.
By taking instead models on (certain) subsets of $\#N$, one can also encompass finite models up to isomorphism; see \cite{Cscc}.
One can also broaden (e.g., to all finitary first-order) or narrow (e.g., to only atomic) the class of formulas which define an open subset of $X$; see \cite[\S11.4]{Gidst}, \cite{BFRSVle}.
Alternatively, one can consider \emph{marked models} over a fixed generating set, e.g., groups generated by $\#N$, hence quotients of the free group $\ang{\#N}$ by a normal subgroup, represented as the space of all normal subgroups of $\ang{\#N}$; see e.g., \cite{Tqiso}.

This landscape is clarified by the observation that such $X$, as a space of \emph{codes} of countable structures, may be meaningfully distinguished from the structures themselves.
That is, rather than a ``space of models'' $X$, one really has a topological space $X$ together with a ``continuous map''
\begin{align*}
X &--> \{\text{all $\@L$-structures}\} \\
x &|--> \@M_x
\end{align*}
where the right-hand side is \emph{not} a topological space, but a higher-order analog thereof, crucially differing in that two ``points'' (i.e., structures) may be ``equal'' (i.e., isomorphic) in more than one way.
Such a generalized ``space'' is made precise by the concept of the \emph{classifying topos} of $\@L$-structures.
Classifying toposes have been widely influential in such areas as algebraic geometry, topology, and category theory; see \cite{AGVsga4}, \cite{MMtopos}, \cite{Jeleph}.
However, to our knowledge, topos-theoretic ideas are not very well-known or used in the countable model theory literature, perhaps due to the substantial amount of category theory needed to define and work with them.%
\footnote{There are of course exceptions, such as \cite{Catomic}, \cite{DLwap}, \cite{Kubis}, as well as parts of model theory further from the countable realm where categorical tools are more routinely used, e.g., \cite{MRfocl}, \cite{BRpure}, \cite{Prest}, \cite{LRVfork}.}

\subsection{Étale structures}

The goal of this article is to give a self-contained development of the above perspective from a classical model-theoretic and descriptive set-theoretic angle, minimizing the category theory needed.
The central concept of a ``continuous family of structures'' $(\@M_x)_{x \in X}$ parametrized over a topological space $X$, as above, may be represented more concretely as an \defn{étale bundle of structures} $\@M -> X$, where $\@M$ is the disjoint union of the structures $\@M_x$ equipped with a global topology that captures the ``continuity'' of the fibers $\@M_x$ over $x \in X$.
This concept is well-known in topos theory, perhaps more commonly in the equivalent form of a \emph{sheaf of structures}.
However, by taking an étale structure $\@M -> X$ as the formal meaning of a ``continuous map'' $X -> \{\text{$\@L$-structures}\}$ as above, one may develop much of classifying topos theory (for countable structures and theories) in a point-set topological manner, without reference to toposes, sheaves, or other categorical notions.

A recurring theme of our account is that many standard concepts and constructions in point-set topology and descriptive set theory have \emph{precise} analogs for étale structures, thought of as ``continuous families of structures'' as above.
Often, these analogies yield more precise formulations of well-known folklore connections.
For instance, it is well-known that the classical Baire category theorem is closely related to the omitting types theorem in (infinitary) first-order logic, and in fact these two results may be used to prove each other; see e.g., \cite{ETomit}.
In \cref{sec:omittype}, we explain how the omitting types theorem is the literal first-order generalization of the Baire category theorem; and we show how the former may be reduced to the latter via an étale structure, as a higher-order instance of the fact that a continuous open surjection (is category-preserving, hence) may be used to transfer Baire category from its domain to its codomain.
\Cref{tbl:analogy} depicts various other topological concepts with an étale model-theoretic analog that we will discuss.

\begin{table}[hbtp]
\centering
\begin{tabularx}{\textwidth}{XX}
\toprule
topology &
étale model theory
\\
\midrule
continuous map $X --->{f} Y$ &
étale structure $\@M -> X$ (\cref{def:estr})
\\
continuous open map (onto its image) &
étale structure with $\Sigma_1$ saturations (\ref{def:estr-osat})
\\
Borel ($\*\Sigma^0_\alpha$) set $V \subseteq Y$ &
$\@L_{\omega_1\omega}$ ($\Sigma_\alpha$) formula $\phi$ (\ref{def:sigmapi})
\\
preimage $f^{-1}(V)$ of $V \subseteq Y$ &
interpretation $\phi^\@M$ of formula $\phi$ (\ref{rmk:estr-cts})
\\
image $f(U)$ of $U \subseteq X$ &
theory/type of $U \subseteq M$
\\
saturation $[U]_{\ker(f)} = f^{-1}(f(U))$ of $U \subseteq X$ &
saturation $\Iso_X(\@M) \cdot U$ of $U \subseteq M$
\\
Baire quantifier $\exists^*_f(U)$ of $U \subseteq X$ &
Vaught transform $U^{\triangle \Iso_X(\@M)}$ of $U \subseteq M$ (\ref{def:vaught})
\\
kernel $\ker(f) \subseteq X^2$ &
isomorphism groupoid $\Iso_X(\@M) -> X^2$ (\ref{def:isogpd})
\\
composition \smash{$Z -> X --->{f} Y$} &
pullback $Z \times_X \@M$ (\ref{rmk:estr-pb})
\\
change of topology on $Y$ &
Morleyization (\ref{def:estr-morley})
\\
\bottomrule
\end{tabularx}
\caption{Correspondence between topological and étale model-theoretic notions, when $Y$ is replaced with the ``space of all structures''.}
\label{tbl:analogy}
\end{table}

In the course of developing this dictionary, we also prove generalizations to arbitrary étale parametrizations of classical descriptive set-theoretic results known for \emph{specific} parametrizations.
For instance, the Lopez-Escobar theorem \cite{Llo1o} and its strengthening by Vaught \cite{Vaught} show that every Borel isomorphism-invariant set of models, in the classical space of countably infinite models on $\#N$, is axiomatizable by an infinitary formula, of the same quantifier complexity as the Borel complexity of the given set.
Recently in \cite{BFRSVle}, the authors prove a similar result, for a different parametrizing space, that takes positive (i.e., negation-free) formulas into account.%
\footnote{\label{ft:effective}%
The papers \cite{BFRSVle}, \cite{CMRpos}, \cite{HMMMcomp}, \cite{HMMborel} are also largely concerned with tracking the effective (lightface) complexity of formulas and models, which we do not consider at all in this paper.}
In \cite{Cscc}, we proved such a result that also takes finite models into account.
One could also ask whether a Lopez-Escobar theorem holds for the space of marked structures.
We prove a result encompassing all of these as special cases:

\begin{theorem}[Lopez-Escobar for étale structures; see \cref{thm:lopez-escobar}]
For any étale parametrization of countable structures $\@M -> X$ with $\Sigma_1$ saturations, every isomorphism-invariant $\*\Sigma^0_\alpha$ set of tuples $A \subseteq M^n_X$ is defined by a $\Sigma_\alpha$ formula.
\end{theorem}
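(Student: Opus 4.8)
The plan is to separate the two ingredients hidden in the statement: the \emph{invariance} of $A$, which I will absorb into a Vaught transform, and the \emph{complexity bound}, which I will prove by transfinite induction about arbitrary (not necessarily invariant) sets. The reduction of invariance is elementary: if $A \subseteq M^n_X$ is invariant, then $A$ equals its own nonmeager Vaught transform $A^{\triangle \Iso_X(\@M)}$ over the full groupoid, since for a fixed tuple $\bar a$ the set of isomorphisms $g$ out of its fibre with $g\cdot\bar a \in A$ is either all of that (nonempty Baire) fibre or empty, according to whether $\bar a \in A$, hence is nonmeager exactly when $\bar a \in A$. So it suffices to prove, by simultaneous induction on $\alpha$, the two assertions: $(\Sigma_\alpha)$ for every $\*\Sigma^0_\alpha$ set $B$ and every basic neighbourhood $N_s$ in $\Iso_X(\@M)$, the transform $B^{\triangle N_s}$ is defined by a $\Sigma_\alpha$ formula; and $(\Pi_\alpha)$ for every $\*\Pi^0_\alpha$ set $B$, the comeager transform $B^{* N_s}$ is defined by a $\Pi_\alpha$ formula. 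Applying $(\Sigma_\alpha)$ with $N_s$ the trivial (unconstrained) neighbourhood, i.e.\ the full groupoid, then yields the theorem.

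The inductive engine rests on two facts about the category quantifiers. First, $(-)^{\triangle N_s}$ commutes with countable unions (a fibrewise countable union is nonmeager iff some term is), and dually $(-)^{* N_s}$ commutes with countable intersections; this disposes of the outermost union defining a $\*\Sigma^0_\alpha$ set. Second, the localization identity $B^{\triangle N_s} = \bigcup_{t \supseteq s} B^{* N_t}$, valid for sets with the Baire property, rewrites a nonmeager transform as a union of comeager transforms over all finer neighbourhoods. The crucial translation is that refining $s$ to $t$ amounts to naming finitely many further elements of the fibre, so that the union over $t \supseteq s$ is realized logically as an existential quantifier over the corresponding new variables. Thus, writing $B = \bigcup_m B_m$ with $B_m \in \*\Pi^0_{\beta_m}$ and $\beta_m < \alpha$, the hypothesis $(\Pi_{\beta_m})$ supplies $\Pi_{\beta_m}$ formulas for the $B_m^{* N_t}$, and assembling them gives $B^{\triangle N_s} = \bigl(\bigvee_m \exists \bar w_m\, \psi_m\bigr)^{\@M}$ with each $\psi_m \in \Pi_{\beta_m}$ --- precisely the shape of a $\Sigma_\alpha$ formula, establishing $(\Sigma_\alpha)$. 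The companion $(\Pi_\alpha)$ follows by duality from $B^{* N_s} = \neg\,\bigl((\neg B)^{\triangle N_s}\bigr)$, negating the $\Sigma_\alpha$ formula produced for $\neg B \in \*\Sigma^0_\alpha$.

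The base case $\alpha = 1$ is where the standing hypothesis of $\Sigma_1$ saturations enters. An open $A \subseteq M^n_X$ is a countable union of basic opens, each cut out by a quantifier-free condition on tuples lying over an open subset of $X$; for open sets the transform $(-)^{\triangle N_s}$ coincides with the localized saturation under the groupoid, and $\Sigma_1$ saturations (\cref{def:estr-osat}) is exactly the assumption guaranteeing this saturation is again $\Sigma_1$, i.e.\ defined by an existential formula. Distributing over the union stays within $\Sigma_1$, giving $(\Sigma_1)$, and $(\Pi_1)$ follows by the same duality.

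The hard part, I expect, will be setting up the Baire-category calculus over the \emph{groupoid} $\Iso_X(\@M)$ rather than over a single Polish group: I must check that the morphism fibres of $\Iso_X(\@M)$ over points of $X^2$ are Baire spaces, that the Kuratowski--Ulam theorem and the definability of the category quantifier hold fibrewise, and in particular that the localization identity $B^{\triangle N_s} = \bigcup_{t\supseteq s} B^{* N_t}$ persists in this relative setting. The second delicate point is the precise bookkeeping identifying ``refining $N_s$ to $N_t$'' with ``existentially quantifying over newly named variables,'' since it is exactly this identification that pins the quantifier complexity of the resulting formula to $\Sigma_\alpha$, neither higher nor lower.
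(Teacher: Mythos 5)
Your overall strategy is the paper's: reduce the invariant case to the Vaught transform $\Iso_X(\@M) * A$, induct on $\alpha$ with the base case supplied by $\Sigma_1$ saturations, use the property of Baire to localize, and realize ``passing to a finer basic neighbourhood of $\Iso_X(\@M)$'' as an existential quantifier over newly named elements of the fibre. (The paper packages this last point as \cref{rmk:lopez-escobar}, expressing the localized transform $\brbr{U |-> V} * A$ in terms of the global transform of $A \times_X U$, which lets it induct only on the global transform rather than running your simultaneous induction over all basic neighbourhoods; this is an organizational difference only.) The fibrewise Baire-category facts you flag as the hard part are in fact routine: each $\cod^{-1}(x)$ is a $\*\Pi^0_2$, hence quasi-Polish, hence Baire subspace of $\Iso_X(\@M)$, and only the property of Baire, not Kuratowski--Ulam, is needed.

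The one step that genuinely fails as written is the decomposition ``$B = \bigcup_m B_m$ with $B_m \in \*\Pi^0_{\beta_m}$, $\beta_m < \alpha$.'' The paper works with Selivanov's hierarchy (\cref{def:borel}) on non-metrizable spaces, where a $\*\Sigma^0_\alpha$ set is a countable union of \emph{differences} $A_i \setminus B_i$ of $\*\Sigma^0_{<\alpha}$ sets; such sets need not be countable unions of $\*\Pi^0_{<\alpha}$ sets. Already in Sierpinski space $\#S$ the open set $\{1\}$ is $\*\Sigma^0_2$ but is not a countable union of closed sets, and the spaces $M^n_X$ relevant here are typically of this kind (e.g.\ the parametrization of \cref{ex:estr-per} is built from Sierpinski powers). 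The repair stays entirely within your machinery: for a difference $A \setminus B$ of $\*\Sigma^0_\beta$ sets, the property of Baire gives that $\{g \in N_s \mid \vec{a} \in g(A \setminus B)\}$ is nonmeager iff there is a finer basic $N_t$ on which $\{g \mid \vec{a} \in gA\}$ is nonmeager and $\{g \mid \vec{a} \in gB\}$ is meager; applying your inductive statement for level $\beta$ to both $A$ and $B$ (negating the formula obtained for $B$) and then your refinement-as-existential bookkeeping yields a formula of the shape $\bigvee_i \exists \vec{y}_i\, (\phi_i \wedge \neg\psi_i)$ with $\phi_i, \psi_i$ of class $\Sigma_\beta$, which is exactly the paper's normal form for $\Sigma_\alpha$. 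This is precisely how the proof of \cref{thm:lopez-escobar} proceeds.
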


Here $\Sigma_\alpha$ refers to the $\alpha$th level of the complexity hierarchy of the countably infinitary logic $\@L_{\omega_1\omega}$.
To say that the étale structure $\@M -> X$ has \defn{$\Sigma_1$ saturations} means that the isomorphism saturation of each open set in $M$ (or a finite power) is $\Sigma_1$-definable; see \cref{def:estr-osat}.
This is the key property on an étale structure that implies it is a ``nice'' parametrization of countable models obeying the usual results, and holds for all of the aforementioned ``spaces of countable models'' commonly considered in practice (subject to minor conditions on the theory); see \cref{sec:param}.

As shown in \cref{tbl:analogy}, this condition is analogous to a continuous map between topological spaces $f : X -> Y$ being open, or more precisely, being \emph{open onto its image} $f(X) \subseteq Y$, since that is equivalent to each open $U \subseteq X$ having open saturation by the equivalence relation $\ker(f) \subseteq X^2$ induced by $f$.
Such maps play an important role in descriptive set theory: for nice (e.g., Polish) spaces $X, Y$, the image $f(X) \subseteq Y$ must be $\*\Pi^0_2$, by a combination of classical results of Sierpinski \cite[8.19]{Kcdst} (that an open $T_3$ quotient of a Polish space is Polish) and Alexandrov \cite[3.11]{Kcdst} (that Polish subspaces are $\*\Pi^0_2$).
We prove the analogous result for étale structures:

\begin{theorem}[see \cref{thm:estr-osat-pi02}]
\label{intro:thm:estr-osat-pi02}
For any étale parametrization of countable structures $\@M -> X$ with $\Sigma_1$ saturations, over a (quasi-)Polish base $X$, the class of parametrized models (i.e., those isomorphic to a fiber of $\@M$) is axiomatizable among countable structures by a $\Pi_2$ sentence.
\end{theorem}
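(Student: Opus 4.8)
The plan is to realize the class of parametrized models as an isomorphism-invariant $\*\Pi^0_2$ subset of the classical space of countable structures, and then to feed this descriptive complexity into the Lopez-Escobar theorem for étale structures (\cref{thm:lopez-escobar}). Let $S$ be the classical Polish space of countable $\@L$-structures with universe $\#N$, carrying its tautological étale structure (which has $\Sigma_1$ saturations, this being exactly the classical Lopez-Escobar and Vaught setting), and let $C \subseteq S$ consist of those $s$ whose structure is isomorphic to some fiber $\@M_x$. Then $C$ is patently isomorphism-invariant, and it suffices to prove that $C$ is $\*\Pi^0_2$: for then its complement is an invariant $\*\Sigma^0_2$ set, hence by \cref{thm:lopez-escobar} (applied to $S$) defined by a $\Sigma_2$ sentence $\sigma$, so that $C$ is axiomatized among countable structures by the $\Pi_2$ sentence $\neg\sigma$.

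To show $C$ is $\*\Pi^0_2$ I would follow the topological template of \cref{tbl:analogy}, presenting $C$ as the image of a continuous map, open onto its image, out of a quasi-Polish space, and then invoking the (quasi-)Polish forms of the Sierpinski and Alexandrov results recalled above (\cite[8.19]{Kcdst}, \cite[3.11]{Kcdst}): a $T_0$ continuous open image of a quasi-Polish space is quasi-Polish, and a quasi-Polish subspace of a quasi-Polish space is $\*\Pi^0_2$. The quasi-Polish domain is the space $E$ of enumerated parametrized models, i.e.\ the pairs $(x,e)$ with $x \in X$ and $e : \#N \to \@M_x$ a bijective enumeration of the fiber. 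Concretely $E$ is the subset of bijections inside the inverse limit $M^{\#N}_X := \varprojlim_n M^n_X$ of the finite-tuple spaces; since each $M^n_X$ is quasi-Polish whenever $X$ is (being built from $X$ along the étale, countable-fibered projections $M_X \to X$), and bijectivity of $(a_i)_{i \in \#N}$ is readily seen to be a $\*\Pi^0_2$ condition, $E$ is a $\*\Pi^0_2$, hence quasi-Polish, subspace. The comparison map $q : E \to S$ sending $(x,e)$ to the structure $e^*\@M_x$ obtained by transporting $\@M_x$ to $\#N$ along the bijection $e$ is continuous, because the interpretations of the relation and function symbols are open in the étale structure, and its image is exactly $C$.

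The crux, and the sole place where $\Sigma_1$ saturations is needed, is to check that $q$ is open onto its image $C$. A basic open set of $E$ constrains the base point to an open $V \subseteq X$ and finitely many enumerated elements $e(n_1), \dots, e(n_k)$ to lie in sections $U_1, \dots, U_k \subseteq M_X$; openness of $q$ then says that any $s' \in C$ agreeing with $q(x,e)$ on a large enough finite fragment of its atomic diagram admits witnessing data $(x',e')$ inside the same basic open set. This is a one-step back-and-forth extension statement, and it is exactly what $\Sigma_1$ saturations supplies: it forces the saturation $\Iso_X(\@M) \cdot (U_1 \times \dots \times U_k)$ to be $\Sigma_1$, i.e.\ open, so that lying in the saturation -- equivalently, being realizable by a tuple in $U_1 \times \dots \times U_k$ after applying some element of the isomorphism groupoid -- is certified by an open condition on the atomic type, which the finite fragment of $s'$ is chosen to satisfy. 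I expect this openness verification, tracing the interaction of $\Iso_X(\@M)$ with the neighbourhood basis of $S$, to be the main obstacle; granting it, quasi-Polishness of $E$, continuity of $q$, and the closing appeal to \cref{thm:lopez-escobar} are routine within the framework already in place.
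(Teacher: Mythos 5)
Your proof is a genuinely different route from the paper's. The paper proves \cref{thm:estr-osat-pi02} directly: it writes down an explicit $\Pi_2$ axiomatization from a countable posite presentation of the fiber powers $M^n_X$ together with the $\Sigma_1$ formulas $\phi_U$ defining saturations, and then verifies by a Baire-category back-and-forth (on a lax colimit of the spaces of partial homomorphisms $F_{\vec b}$) that every countable model of these axioms is isomorphic to a fiber. Your proposal instead reduces the theorem to the Lopez-Escobar theorem plus the topological Sierpinski/Alexandrov facts, by exhibiting the class of parametrized models as an invariant $\*\Pi^0_2$ subset of a universal parametrization. The heart of your argument --- that $q^{-1}(q(B))$ for a basic open $B$ constraining finitely many enumerated elements to lie in $W \subseteq M^k_X$ is cut out by the condition $\vec e(\vec n) \in \Iso_X(\@M)\cdot W$, so that $\Sigma_1$-definability of the saturation gives an \emph{ambient} open (not merely relatively open) set witnessing openness of $q$ onto its image --- is correct and isolates the role of the hypothesis accurately. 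There is no circularity: \cref{thm:lopez-escobar} and the verification of $\Sigma_1$ saturations for the universal parametrizations in \cref{sec:param} do not depend on \cref{thm:estr-osat-pi02}. What you lose relative to the paper is the explicit axiomatization and self-containedness; what you gain is a shorter argument given the later machinery.

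However, there is a genuine gap as written: the target space cannot be the classical Polish space of structures on $\#N$. First, your parenthetical claim that its tautological étale structure has $\Sigma_1$ saturations is false --- \cref{ex:estr-infdec} shows it has only $\Sigma_2$ saturations, because $\ne$ and negated atomics are clopen there but not $\Sigma_1$. Consequently \cref{thm:lopez-escobar} does not apply to it directly; after the necessary Morleyization, an invariant $\*\Sigma^0_2$ set is only defined by a formula that is $\Sigma_2$ over negated atomics, i.e.\ $\Sigma_3$ in the paper's negation-free hierarchy, so your argument as stated yields only a $\Pi_3$ sentence and misses the sharp bound that is the content of the theorem. Second, and relatedly, your comparison map $q : E \to \prod_R 2^{\#N^n}$ is not continuous: the étale structure makes each $R^\@M$ open but not clopen, so the preimage of the basic clopen set $\{s : \neg R^s(\vec n)\}$ is closed, not open, in $E$. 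Third, finite and empty fibers admit no bijective enumeration by $\#N$, so they fall outside your $E$ entirely. All three defects are repaired simultaneously by targeting the space of (partially) enumerated structures of \cref{ex:estr-per} (or \cref{ex:estr-er} if all fibers are nonempty), which carries the Sierpinski topology, parametrizes all countable structures, and genuinely has $\Sigma_1$ saturations; with that substitution your argument goes through and constitutes a valid alternative proof.
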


Here \emph{quasi-Polish} refers to a generalization \cite{dBqpol} of Polish spaces that we find more convenient; one definition is that they are precisely the open $T_0$ quotients of Polish spaces.
See \cref{def:qpol}.

In fact, \cref{intro:thm:estr-osat-pi02} generalizes not only the aforementioned classical results of Sierpinski and Alexandrov in topology, but also (by taking $X = 1$) the classical model-theoretic fact that an atomic model has a $\Pi_2$ Scott sentence (relative to a countable fragment of $\@L_{\omega_1\omega}$; see \cite[11.5.7]{Gidst}).
That these three classical results are connected at all may come as a surprise, and is an example of the conceptual clarification we believe is provided by the perspective of étale structures.

\subsection{The Joyal--Tierney and Moerdijk representation theorems}

Classically, given a continuous open surjection $f : X ->> Y$ between topological spaces, the topology on $Y$ is necessarily the quotient topology induced by the topology on $X$.
This fact means that if one is interested in studying a class of spaces $Y$, which can all be parametrized as open quotients of a restricted class of spaces $X$, then in some sense it ``suffices'' to study the spaces $X$ together with the equivalence relations $\ker(f) \subseteq X^2$.
For example, this provides one formal explanation of the idea that to do descriptive set theory, one really only needs to consider the Baire space $\#N^\#N$, whose open quotients yield all Polish spaces.

While this topological fact is rather easy, the analogous fact for étale structures with $\Sigma_1$ saturations, i.e., ``continuous open maps $X -> \{\text{all structures}\}$'', is much deeper, since the ``space'' we are parametrizing is a much more complicated object than the genuine space of parameters $X$.
The analog of the equivalence relation $\ker(f) \subseteq X^2$, which consists of pairs of points $x, y \in X$ such that $f(x) = f(y)$, is the \defn{isomorphism groupoid} $\Iso_X(\@M)$ of an étale structure $\@M -> X$, consisting of $x, y \in X$ \emph{together with an isomorphism} $g : \@M_x \cong \@M_y$.
There is a natural ``pointwise convergence'' topology on $\Iso_X(\@M)$ (generalizing the pointwise convergence topology on $\Aut(\@M)$ for a single countable structure $\@M$) that turns it into a topological groupoid with space of objects $X$; see \cref{def:isogpd}.
We now have one of the central results of topos theory, which we state and prove for the restricted context of countable structures and $\@L_{\omega_1\omega}$ in \cref{sec:jt}:

\begin{theorem}[Joyal--Tierney \cite{JTloc}; see \cref{thm:jt} and \cref{rmk:jt}]
\label{intro:thm:jt}
Let $\@M -> X$ be an étale parametrization of countable structures with $\Sigma_1$ saturations, and let $\@T$ be the $\Pi_2$ theory of its fibers from \cref{intro:thm:estr-osat-pi02}.
Then the $\Sigma_1$ imaginaries over $\@T$ are canonically equivalent (as a category) to the continuous étale actions of the groupoid $\Iso_X(\@M)$.
\end{theorem}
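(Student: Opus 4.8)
The plan is to realize the claimed equivalence by a single explicit functor, the \emph{interpretation functor} $I$, sending a $\Sigma_1$ imaginary to its interpretation in the parametrizing structure $\@M -> X$. A $\Sigma_1$ imaginary is the data of a $\Sigma_1$ formula $\phi(\bar x)$ together with a $\Sigma_1$ formula $\epsilon(\bar x, \bar y)$ that defines an equivalence relation on $\phi$ in every model of $\@T$; its interpretation is $I(\phi,\epsilon) := \phi^\@M/\epsilon^\@M$. By the $\Sigma_1$ saturations hypothesis (\cref{def:estr-osat}), $\phi^\@M \subseteq M^n_X$ and $\epsilon^\@M$ are open, so $\phi^\@M -> X$ is a local homeomorphism and the quotient $I(\phi,\epsilon) -> X$ is again étale; moreover the tautological action of $\Iso_X(\@M)$ on tuples descends to a continuous étale action on it. A morphism of imaginaries (a $\Sigma_1$ relation that is $\@T$-provably a function respecting $\epsilon$) maps to the induced equivariant map. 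First I would verify these well-definedness points, which are precisely the étale analogs of the ``interpretation $\phi^\@M$'' and ``isomorphism groupoid'' entries of \cref{tbl:analogy}; none is hard once $\Sigma_1$ saturations is in force.

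The equivalence then splits into fully faithfulness and essential surjectivity of $I$. Fully faithfulness I would reduce to the $\alpha = 1$ case of the Lopez-Escobar theorem (\cref{thm:lopez-escobar}). A morphism $I(\phi,\epsilon) -> I(\psi,\delta)$ of actions is a continuous equivariant map over $X$; its graph, being a continuous section of the étale projection, is open, and being equivariant it is isomorphism-invariant. Pulling back along the étale quotient maps $\phi^\@M ->> I(\phi,\epsilon)$ and $\psi^\@M ->> I(\psi,\delta)$ yields an invariant open, i.e.\ invariant $\*\Sigma^0_1$, subset of $\phi^\@M \times_X \psi^\@M$, which by \cref{thm:lopez-escobar} is defined by a $\Sigma_1$ formula $\theta(\bar x,\bar y)$; since its interpretation was a map, $\theta$ defines a function respecting $\epsilon,\delta$ in every model of $\@T$, giving fullness. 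Faithfulness is the statement that a $\Sigma_1$ formula is determined, up to equivalence in every model of $\@T$, by its interpretation in $\@M$; this holds because, by \cref{intro:thm:estr-osat-pi02}, the models of $\@T$ are exactly the fibers of $\@M$ up to isomorphism, so two formulas with equal interpretations hold in the same models.

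The crux is essential surjectivity: every continuous étale $\Iso_X(\@M)$-action $p : E -> X$ is isomorphic to some $I(\phi,\epsilon)$. This is the effective-descent half of the theorem, the genuine étale analog of the easy topological fact that an open surjection $f : X ->> Y$ realizes the étale spaces over $Y$ as the $\ker(f)$-equivariant étale spaces over $X$. The role of $Y$ is played by the ``space of models of $\@T$'', whose étale spaces are the $\Sigma_1$ imaginaries; by \cref{intro:thm:estr-osat-pi02} the parametrization is onto this space, and by $\Sigma_1$ saturations it is open, so the strategy is to run descent for this open surjection. Concretely: since $p$ is étale, cover $E$ by open sections $s_i : V_i -> E$, each a homeomorphism onto an open $W_i \subseteq E$ over an open $V_i \subseteq X$, and take the invariant opens $\Iso_X(\@M)\cdot W_i$. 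I would then produce a single $\Sigma_1$-definable set $D = \phi_D^\@M \subseteq M^n_X$ with an equivariant open surjection $D ->> E$ over $X$ — a \emph{definable cover} of $E$ — after which the relation ``$\bar a,\bar b \in D$ map to the same point of $E$'' is an invariant open equivalence relation, hence $\Sigma_1$-definable by \cref{thm:lopez-escobar}, exhibiting $E \cong I(\phi_D,\epsilon)$.

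The main obstacle is precisely this construction of the definable cover $D ->> E$: it is the step at which the parametrization must ``see enough'' open sets to present an \emph{arbitrary} equivariant étale space definably, rather than merely transport data that was definable to begin with. I expect it to rest on effective descent for the open surjection $X ->> \{\text{models of }\@T\}$ — equivalently, on the comonadicity underlying the classical Joyal--Tierney theorem — recast in the point-set language of saturations, with the $\Sigma_1$ saturations hypothesis supplying openness and \cref{intro:thm:estr-osat-pi02} supplying surjectivity. Once such a cover is in hand, everything else, including both appeals to Lopez-Escobar, is bookkeeping, so I would concentrate the real effort there.
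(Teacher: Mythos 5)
Your architecture is right and you have correctly located the crux, but the crux is exactly the part you have not supplied: the proposal never constructs the equivariant definable cover, and ``effective descent / comonadicity recast in point-set language'' is a description of what that construction must achieve, not an argument for it. The paper's entire proof of \cref{thm:jt} is precisely this construction, and it is not bookkeeping. Concretely, given an open section $S \subseteq A$ of the action, one must find a tuple-section $T \subseteq M^n_X$ whose ``stabilizer'' controls $S$. The paper does this by continuity of the action at identities: for $a \in S \cap A_x$, since $1_{\@M_x}$ fixes $a$, there are open sections $T, T' \subseteq M^n_X$ with $1_{\@M_x} \in \brbr{T |-> T'}$ and $\brbr{T |-> T'} \cdot S' \subseteq S$ for some smaller section $S' \ni a$; after shrinking, one arranges $p(T) = q(S)$ and $\brbr{T |-> T} \cdot S \subseteq S$, whereupon $g \cdot T_y |-> g \cdot S_y$ is a well-defined, continuous, equivariant map $\Iso_X(\@M) \cdot T -> A$ whose image contains $S$ (well-definedness is exactly the displayed stabilizer condition, and continuity requires its own check). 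Lindel\"ofness of $A$ then gives countably many such pairs $(S_i, T_i)$ covering $A$, and $\Sigma_1$ saturations names each $\Iso_X(\@M) \cdot T_i$ by a formula $\phi_i$. Without this step your $D$ does not exist; neither the surjectivity supplied by \cref{intro:thm:estr-osat-pi02} nor the openness supplied by saturations produces it on its own.

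A secondary but real slip: you take an imaginary to be a single pair $(\phi,\epsilon)$, whereas \cref{def:imag} requires a countable disjoint union $(\bigsqcup_i \phi_i)/(\bigsqcup_{i,j} \epsilon_{ij})$ of formulas of varying arities $n_i$, and essential surjectivity genuinely needs this: the sections of an arbitrary \'etale action require tuples of unbounded length to pin down, so no single $\phi_D^\@M \subseteq M^n_X$ for a fixed $n$ can cover $A$ in general. Your treatment of full faithfulness (graphs of continuous equivariant maps over $X$ are invariant open, hence $\Sigma_1$-definable by saturations; injectivity on morphisms because every countable model of $\@T$ is isomorphic to a fiber) is correct and is a reasonable direct substitute for the paper's appeal in \cref{rmk:jt} to the categorical criterion of being essentially surjective, conservative, and full on subobjects.
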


Here by \defn{$\Sigma_1$ imaginaries}, we mean the usual model-theoretic concept \cite[Ch.~5]{Hmod} adapted to the infinitary setting: namely, a $\Sigma_1$-definable quotient of a countable disjoint union of $\Sigma_1$-definable sets (represented syntactically as families of $\Sigma_1$ formulas); see \cref{def:imag}.
Analogously to the topological setting, this result really says that étale structures in some sense ``fully parametrize'' countable models, since the theory $\@T$ of the parametrized models may be ``recovered'' from the topological groupoid $\Iso_X(\@M)$.
More precisely, we recover $\@T$ up to $\Sigma_1$ bi-interpretability, since a theory is determined up to bi-interpretability by its imaginaries.

While \cref{intro:thm:jt} concerns the topological context, meaning on the model-theoretic side that we restrict to $\Sigma_1$ formulas, there is an analogous result in the Borel context, which shows that a theory may be recovered up to $\@L_{\omega_1\omega}$ bi-interpretability from its \emph{Borel} groupoid of isomorphisms.
The proof combines \cref{intro:thm:jt} with a (Becker--Kechris-type) topological realization theorem for groupoid actions from \cite{Cbk} in order to translate between the topological and Borel settings.
(As explained in \cref{sec:intro-connections} below, this generalizes results from \cite{Cscc}, \cite{HMMborel}.)

\begin{theorem}[see \cref{thm:scc} and \cref{rmk:scc}]
\label{intro:thm:scc}
Let $\@M -> X$ be an étale parametrization of countable structures with $\Sigma_1$ saturations, and let $\@T$ be the theory of its fibers.
Then the $\@L_{\omega_1\omega}$ imaginaries over $\@T$ are canonically equivalent (as a category) to the fiberwise countable Borel actions of the groupoid $\Iso_X(\@M)$.
\end{theorem}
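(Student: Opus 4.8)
The plan is to derive Theorem~\ref{intro:thm:scc} from the topological version Theorem~\ref{intro:thm:jt} by a ``transfer of topology'' argument, using the topological realization theorem for groupoid actions from \cite{Cbk} to bridge the $\Sigma_1$-topological world and the $\@L_{\omega_1\omega}$-Borel world. The two sides of the equivalence we must match up are: on the syntactic side, the $\@L_{\omega_1\omega}$ imaginaries over $\@T$ (quotients of countable disjoint unions of $\@L_{\omega_1\omega}$-definable sets), and on the semantic side, the fiberwise countable Borel actions of the groupoid $\Iso_X(\@M)$. The key observation driving the whole approach is that an $\@L_{\omega_1\omega}$ imaginary is nothing but a $\Sigma_1$ imaginary \emph{after a change of topology}: by Morleyization (row ``change of topology'' in \cref{tbl:analogy}), one can add the Borel structure $\*\Sigma^0_\alpha$ sets as new basic open sets, so that every $\@L_{\omega_1\omega}$ formula becomes $\Sigma_1$ over a finer étale parametrization $\@M' -> X'$ whose fibers are the Morleyizations of the fibers of $\@M$, while the isomorphism groupoid is unchanged up to the topology. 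Correspondingly, on the action side, a \emph{Borel} (rather than continuous) action of $\Iso_X(\@M)$ should become a \emph{continuous} action of the retopologized groupoid.

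First I would make precise the Morleyization/retopologization functor and verify it preserves the hypothesis of $\Sigma_1$ saturations: starting from $\@M -> X$ with $\Sigma_1$ saturations, I would refine the topology on $X$ (and correspondingly on $M_X$) by declaring all invariant Borel sets — equivalently all $\@L_{\omega_1\omega}$-definable sets, by the Lopez-Escobar Theorem~\ref{thm:lopez-escobar} — to be clopen, checking via \cref{def:estr-morley} that the result $\@M' -> X'$ is again an étale parametrization with $\Sigma_1$ saturations whose theory $\@T'$ is the Morleyization of $\@T$. Under this refinement, the $\Sigma_1$ imaginaries over $\@T'$ are \emph{exactly} the $\@L_{\omega_1\omega}$ imaginaries over $\@T$, since a $\Sigma_1$ formula relative to the finer logic is by construction a Boolean/countable-disjunctive combination of the old formulas, i.e.\ an arbitrary $\@L_{\omega_1\omega}$ formula. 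Applying Theorem~\ref{intro:thm:jt} to $\@M' -> X'$ then gives a canonical equivalence between the $\@L_{\omega_1\omega}$ imaginaries over $\@T$ and the continuous étale actions of the groupoid $\Iso_{X'}(\@M')$.

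The main obstacle, and the heart of the proof, is the comparison between the continuous étale actions of the retopologized groupoid $\Iso_{X'}(\@M')$ and the fiberwise countable Borel actions of the original groupoid $\Iso_X(\@M)$. Here I would invoke the Becker--Kechris-type topological realization theorem from \cite{Cbk}: it should say that every fiberwise countable Borel action of a (quasi-)Polish groupoid admits a compatible finer topology making the action continuous and étale, and that this topology is essentially unique up to the relevant notion of equivalence. Concretely, I must (i) check that the passage from $\Iso_X(\@M)$ to $\Iso_{X'}(\@M')$ is precisely the groupoid-level analog of the change of topology, i.e.\ that $\Iso_{X'}(\@M')$ realizes the finest ``compatible'' topology and that its continuous actions correspond exactly to Borel actions of $\Iso_X(\@M)$; and (ii) verify functoriality — that morphisms of actions (equivariant Borel maps) correspond under realization to morphisms of the continuous actions — so that the correspondence is an equivalence of \emph{categories} and not merely a bijection on objects. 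The realization theorem is what guarantees that no Borel actions are lost (every Borel action is realizable continuously over the finer topology) and none are spuriously added.

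Chaining these together yields the desired equivalence: $\@L_{\omega_1\omega}$ imaginaries over $\@T$ $\simeq$ $\Sigma_1$ imaginaries over $\@T'$ $\simeq$ continuous étale actions of $\Iso_{X'}(\@M')$ $\simeq$ fiberwise countable Borel actions of $\Iso_X(\@M)$, where the first equivalence is the Morleyization bookkeeping, the second is Theorem~\ref{intro:thm:jt}, and the third is the realization theorem of \cite{Cbk}. I expect the real technical work to lie in (a) setting up the change-of-topology on the groupoid so that quasi-Polishness is preserved — this is where the hypothesis that $X$ is quasi-Polish, via Theorem~\ref{intro:thm:estr-osat-pi02}, becomes essential, since the realization theorem needs a quasi-Polish (or Polish) groupoid to apply — and (b) tracking the coherence of all these equivalences at the level of morphisms, which is routine but laborious. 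The cleanest way to package the argument is likely to prove a single lemma identifying ``$\@L_{\omega_1\omega}$ over $\@T$'' with ``$\Sigma_1$ over the Morleyization,'' and then cite Theorem~\ref{intro:thm:jt} and \cite{Cbk} as black boxes.
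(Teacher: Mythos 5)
Your overall strategy --- reduce to the topological Joyal--Tierney theorem by a change of topology (Morleyization) combined with the topological realization theorem of \cite{Cbk} --- is the same as the paper's, but the way you order the steps introduces a genuine gap. You propose a single, universal retopologization $\@M' \to X'$ obtained by ``declaring all invariant Borel sets --- equivalently all $\@L_{\omega_1\omega}$-definable sets --- to be clopen,'' and then claim that the $\Sigma_1$ imaginaries over the Morleyized theory are exactly the $\@L_{\omega_1\omega}$ imaginaries over $\@T$. This cannot work as stated: there are uncountably many $\@L_{\omega_1\omega}$ formulas (and uncountably many invariant Borel sets), so the resulting topology on $X'$ is not second-countable, hence not quasi-Polish, and the expanded language $\@L'$ is uncountable. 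Both the Morleyization machinery (\cref{def:morley}, \cref{def:estr-morley}, and the lemma that $X'$ stays quasi-Polish, which relies on adjoining only countably many sets via \cref{it:qpol-dis}) and the topological Joyal--Tierney theorem (\cref{thm:jt}, which requires a second-countable étale structure) break down. Relatedly, there is no ``finest compatible quasi-Polish topology'' on the groupoid whose continuous actions are exactly the Borel actions of $\Iso_X(\@M)$; the realization theorem only gives, \emph{for each} Borel action, \emph{some} countable refinement realizing it.

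The paper's proof of \cref{thm:scc} fixes the order of quantifiers: given a single fiberwise countable Borel $\Iso_X(\@M)$-space $A$, it first applies the realization theorem of \cite{Cbk}, which makes $A$ étale and continuous after adjoining only \emph{countably many} Vaught transforms $W_i * B_i$ (for basic open $W_i \subseteq \Iso_X(\@M)$ and Borel $B_i \subseteq X$) to the topology of $X$; by \cref{rmk:lopez-escobar} each of these equals $p(U_i \cap \phi_i^{\@M})$ for some $\@L_{\omega_1\omega}$ formula $\phi_i$, and Morleyizing just these countably many formulas (via \cref{def:estr-morley} and \cref{thm:estr-osat-morley}) yields a second-countable quasi-Polish $\@M' \to X'$ with $\Sigma_1$ saturations to which \cref{thm:jt} applies; the resulting $\Sigma_1$ imaginary over the Morleyized theory then unwinds to an $\@L_{\omega_1\omega}$ imaginary over $\@T$. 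So the Morleyization must be tailored to the particular action being named rather than done once and for all; if you restructure your argument this way (realization first, then a countable Morleyization depending on the given action), the rest of your outline goes through.
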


As we explain in \cref{sec:interp}, it is in fact possible to make this recovery of a theory from its groupoid of models fully precise, provided that one is willing to make full use of category-theoretic language (which is why we postpone this discussion until \cref{sec:interp}).
We are saying that
\begin{align*}
\{\text{theories}\} &--> \{\text{groupoids}\} \\
\@T &|--> \text{isomorphism groupoid of some parametrization of $\@T$}
\end{align*}
is an ``embedding'': we can recover a theory $\@T$ from its isomorphism groupoid.
To make this precise, we should specify the kind of structure on the domain and codomain of this map.
Namely, they are \emph{2-categories}: between theories $\@T_1, \@T_2$, we have interpretations $\@F : \@T_1 -> \@T_2$ as morphisms; and between parallel interpretations, we have \emph{definable isomorphisms as 2-cells} (all either $\Sigma_1$ or $\@L_{\omega_1\omega}$):
\begin{equation*}
\begin{tikzcd}
\@T_1 \rar[bend left,"\@F"] \rar[bend right,"\@G"'] \rar[phantom,"\scriptstyle\Down h"] &
\@T_2 \rar["\@H"] &
\@T_3
\end{tikzcd}
\end{equation*}
Similarly, between groupoids, we have functors, between which we have natural isomorphisms (continuous or Borel, respectively).
See \cref{def:interp} and \cref{rmk:2cat} for details.

\begin{theorem}[see \cref{thm:interp2-equiv,thm:interp2-borel-equiv}]
\label{intro:thm:interp2-equiv}
We have contravariant equivalences of 2-categories
\begin{align*}
\brace*{\begin{gathered}
\text{countable $\Pi_2$ theories} \\
\text{$\Sigma_1$ interpretations} \\
\text{$\Sigma_1$-definable isomorphisms}
\end{gathered}}
&\simeq
\brace*{\begin{gathered}
\text{open non-Archimedean quasi-Polish groupoids} \\
\text{continuous functors} \\
\text{continuous natural transformations}
\end{gathered}},
\\
\brace*{\begin{gathered}
\text{countable $\@L_{\omega_1\omega}$ theories} \\
\text{$\@L_{\omega_1\omega}$ interpretations} \\
\text{$\@L_{\omega_1\omega}$-definable isomorphisms}
\end{gathered}}
&\simeq
\brace*{\begin{gathered}
\text{open non-Archimedean quasi-Polish groupoids} \\
\text{Borel functors} \\
\text{Borel natural transformations}
\end{gathered}},
\end{align*}
taking a theory to the isomorphism groupoid of some parametrization of it.
\end{theorem}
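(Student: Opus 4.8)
The plan is to verify that the indicated contravariant assignment $\@T \mapsto \Iso_X(\@M)$ (for $\@M \to X$ any parametrization of $\@T$) extends to a 2-functor satisfying the two conditions sufficient for a biequivalence of 2-categories: it is biessentially surjective on objects, and it induces an equivalence on each hom-category (a local equivalence). The engine throughout is the Joyal--Tierney dictionary of \cref{intro:thm:jt}, which identifies the $\Sigma_1$ imaginaries over $\@T$ with the continuous étale actions of $G := \Iso_X(\@M)$; this is precisely what lets me translate the model-theoretic $1$- and $2$-cells into groupoid-theoretic ones. I would treat the $\Sigma_1$/continuous case in full and then transport the argument to the $\@L_{\omega_1\omega}$/Borel case.

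For the local equivalence, fix theories $\@T_1, \@T_2$ with chosen parametrizations yielding $G_1, G_2$. An interpretation $\@F : \@T_1 \to \@T_2$ is, unwound, a structure-preserving functor from the category of $\Sigma_1$ imaginaries of $\@T_1$ to that of $\@T_2$; equivalently, it applies fiberwise to the parametrization of $\@T_2$ to produce a new étale structure $\@F(\@M^{(2)}) \to X_2$ whose fibers model $\@T_1$ and which still has $\Sigma_1$ saturations. Functoriality of $\@F$ makes each isomorphism in $G_2$ act on this new structure, yielding a continuous functor $G_2 \to \Iso_{X_2}(\@F(\@M^{(2)}))$; composing with an equivalence onto $G_1$ (from the essential uniqueness of parametrizations, below) gives the contravariant image functor $G_2 \to G_1$. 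Conversely, a continuous functor $G_2 \to G_1$ lets me pull back the generic action of $G_1$ to a $G_2$-action carrying a $\@T_1$-structure, which by \cref{intro:thm:jt} is a $\Sigma_1$-imaginary copy of $\@T_1$ inside $\@T_2$, i.e.\ an interpretation. I would check that these two passages are mutually inverse up to definable isomorphism and continuous natural isomorphism respectively, thereby matching the $2$-cells on both sides; this is essentially the statement that $G \mapsto \{\text{continuous étale } G\text{-actions}\}$ is $2$-fully faithful on the relevant categories, which is the Morita-theoretic heart of Joyal--Tierney.

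For biessential surjectivity I must realize an arbitrary open non-Archimedean quasi-Polish groupoid $G$ as $\Iso_X(\@M)$ for some parametrization. Taking $X := G_0$, the non-Archimedean hypothesis provides at each object a neighborhood basis of open subgroupoids, equivalently a countable family of $\Sigma_1$ relations on the fibers whose stabilizer data recover $G$ exactly; assembling these into a language produces an étale $\@L$-structure $\@M \to X$ with $\Iso_X(\@M) \cong G$ over $X$, and openness of $G$ is what forces the $\Sigma_1$-saturation condition. The theory $\@T$ is then the $\Pi_2$ theory of its fibers supplied by \cref{intro:thm:estr-osat-pi02}. I expect this reconstruction --- together with the essential uniqueness used above, namely that any two parametrizations of a single theory have canonically equivalent isomorphism groupoids --- to be the main obstacle: it is the genuine ``representation'' content of the theorem, and it requires care that strict continuous functors (rather than the weaker bibundles appearing in the general topos-theoretic statement) already suffice to present all interpretations in this open non-Archimedean setting, and that the construction is natural enough to cohere into a $2$-functor.

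Finally, the $\@L_{\omega_1\omega}$/Borel version reuses the same objects --- the class of open non-Archimedean quasi-Polish groupoids is identical --- so biessential surjectivity is unchanged. Only the hom-categories differ, and I would rerun the local-equivalence argument with \cref{intro:thm:scc} in place of \cref{intro:thm:jt}, now matching $\@L_{\omega_1\omega}$ interpretations with fiberwise countable Borel actions, hence with Borel functors. The one extra ingredient is the topological realization theorem of \cite{Cbk}, which I would invoke to pass between Borel actions of the quasi-Polish groupoid and their continuous realizations wherever the earlier arguments assumed continuity, ensuring that the Borel $1$- and $2$-cells correspond exactly to $\@L_{\omega_1\omega}$ interpretations and definable isomorphisms.
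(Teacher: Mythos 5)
Your overall architecture matches the paper's: full faithfulness of $\@T \mapsto \Iso_X(\@M)$ via Joyal--Tierney, plus essential surjectivity via a Moerdijk-style reconstruction from open subgroupoids. But two steps that you treat as routine are in fact the substantive content, and one of them is a genuine gap. In the reconstruction, building the canonical structure $\@M$ over the object space $X$ of $G$ out of the coset spaces $G/U$ (with right-multiplication maps between them as function symbols) only yields, a priori, a topological embedding $\iota : G \to \Iso_X(\@M)$ whose image is $\cod$-fiberwise \emph{dense} (\cref{thm:onagpd-dense}); your claim that the stabilizer data ``recover $G$ exactly'' is precisely what does not follow from the construction alone. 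The upgrade from dense image to surjectivity is where quasi-Polishness is essentially used: the paper applies Pettis's theorem for the translation action of the open quasi-Polish groupoid $G$ on $\cod : \Iso_X(\@M) \to X$ to conclude $G = GG = \Iso_X(\@M)$ (\cref{thm:onagpd-qpol}). Without this Baire-category step you have only shown that $G$ embeds densely into an isomorphism groupoid (the ``étale-completeness'' issue of \cref{rmk:onagpd-qpol}), not that it is one.

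The second soft spot is your appeal to ``essential uniqueness of parametrizations'' and to ``composing with an equivalence onto $G_1$.'' To turn an interpretation $\@F : \@T_1 \to \@T_2$ into an actual continuous functor $\Iso_{X_2}(\@M_2) \to \Iso_{X_1}(\@M_1)$ one must produce a continuous map $f : X_2 \to X_1$ with $f^*(\@M_1) \cong \@F^{\@M_2}$; the paper obtains this by applying Michael's continuous selection theorem to the open surjection $\cod : \Iso_{X_1,X_2}(\@M_1,\@F^{\@M_2}) \to X_2$, and this is exactly why the continuous-case equivalence is routed through \emph{zero-dimensional Polish} parametrizations in \cref{thm:interp2}, with Kechris's large-section uniformization playing the same role in the Borel case. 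The same selection argument is what proves your ``essential uniqueness'' claim (apply it to the identity interpretation), so it cannot be assumed as an input. Finally, the paper manages the non-canonical coding choices by interposing an auxiliary 2-category of parametrized theories that maps to both sides; some such device is needed for your assignment to cohere into an honest 2-functor rather than a choice-dependent recipe.
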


As alluded to above, the ``injectivity'' of these maps (more precisely, that they are fully faithful 2-functors) boils down to the Joyal--Tierney \cref{intro:thm:jt} and its Borel analog, \cref{intro:thm:scc}.

The ``surjectivity'', that every topological groupoid satisfying certain conditions listed on the right-hand side above (see \cref{def:gpd-subgpd} for details) can be represented as an isomorphism groupoid, boils down to a topos-theoretic argument of Moerdijk \cite{Mtop2}, which constructs from an \emph{abstract} topological groupoid $G$ (satisfying certain conditions) an étale structure $\@M$ over the objects of $G$.
We present a self-contained proof in \cref{thm:onagpd-dense}, and show that in the countable case, via a Baire category argument, we get a full representation of the groupoid $G$ as $\Iso(\@M)$; see \cref{thm:onagpd-qpol}.
This result simultaneously generalizes the Yoneda lemma from category theory (which shows that every \emph{discrete} groupoid is a canonically an isomorphism groupoid, via the left action on itself), and the classical result that non-Archimedean Polish groups are precisely the automorphism groups of countable structures (see e.g., \cite[2.4.4]{Gidst}).

\subsection{Connections to other work}
\label{sec:intro-connections}

This article was largely motivated by several recent works in the countable model theory literature showing connections between the syntax and semantics of infinitary logic.
The earliest inspiration for this line of work seems to be \cite{AZqfin}, showing that the automorphism group of an $\aleph_0$-categorical (in the usual finitary first-order sense) structure determines it up to (finitary) bi-interpretability.
Analogously, \cite{HMMborel} showed that the automorphism group of an arbitrary countable structure determines it up to $\@L_{\omega_1\omega}$ bi-interpretability.
In that paper, respectively \cite{HMMMcomp}, the authors also showed that Borel, respectively continuous, functors between the groupoids of all isomorphic copies of two structures on underlying set $\#N$ correspond to $\@L_{\omega_1\omega}$, respectively $\Sigma_1$, interpretations between those structures.
In \cite{CMRpos}, the authors prove a finer result in the continuous context keeping more detailed track of negations in formulas.

From the étale perspective, these results can be seen as special cases of the Joyal--Tierney \cref{intro:thm:jt} and its Borel version, \cref{intro:thm:scc}, for specific examples of étale parametrizations $\@M -> X$ (whose fibers are all isomorphic).\cref{ft:effective}
The special case of bi-interpretations follows from general categorical considerations, once Joyal--Tierney is recast into its 2-categorical ``injectivity'' form of \cref{intro:thm:interp2-equiv}.
In \cite{Cscc}, whose main result was yet another special case of \cref{intro:thm:scc} (again for a specific parametrization), we made this categorical perspective explicit, including the connections to the Joyal--Tierney and Becker--Kechris theorems (as noted above \cref{intro:thm:scc}).

We should also note that, in the categorical literature, there have been previous point-set topological presentations of the Joyal--Tierney theorem, such as \cite{BMgpd} and \cite{AFscc}.
The latter paper also used the Joyal--Tierney machinery to give a 2-categorical ``duality'' between theories and groupoids, as in \cref{intro:thm:interp2-equiv}, but for finitary first-order logic, and again considering only a specific étale parametrization (or rather, its sheaf-theoretic formulation).

We view this paper as a sequel to \cite{Cscc}, that is informed by the realization that working with an abstract étale structure, rather than a specific ``space of countable models'' as was done in that paper, yields a cleaner and more general presentation.
In the preprint \cite{Cgpd}, we proved the Borel version of \cref{intro:thm:interp2-equiv} using \cite{Cscc} and the aforementioned Moerdijk groupoid representation, in the process developing the needed parts of countable étale model theory in a rather dense fashion; the present paper is a more comprehensive and (hopefully) readable rewrite of the first half of \cite{Cgpd}.
The second half of \cite{Cgpd}, dealing with continuous logic for metric structures \cite{BBHU}, will be subsumed in a future work that also includes a metric version of the Joyal--Tierney theorem and \cite{Cscc}.

\paragraph*{Acknowledgments}

I would like to thank Anush Tserunyan for providing useful comments on a draft of this paper.
Research supported by NSF grant DMS-2224709.

\section{Preliminaries on topology}

Recall that a \defn{Polish space} is a second-countable, completely metrizable topological space.
We will be working in a more general non-Hausdorff setting, that obeys better closure properties:

\begin{definition}[{Selivanov \cite{Sdst}}]
\label{def:borel}
In an arbitrary topological space $X$, possibly non-metrizable (hence closed sets may not be $G_\delta$), we define the \defn{Borel hierarchy} as follows:
\begin{itemize}
\item
A $\*\Sigma^0_1$ set is an open set.
\item
For $\alpha \ge 2$, a $\*\Sigma^0_\alpha$ set is a countable union
$
\bigcup_i (A_i \setminus B_i)
$
where each $A_i, B_i$ is $\*\Sigma^0_\beta$ for some $\beta < \alpha$.
\item
A $\*\Pi^0_\alpha$ set is the complement of a $\*\Sigma^0_\alpha$ set, thus for $\alpha \ge 2$ is of the form
$\bigcap_i (A_i => B_i)$,
where $A_i, B_i \in \*\Sigma^0_\beta$ for some $\beta < \alpha$,
and $A_i => B_i := \neg A_i \cup B_i$.
As usual, $\*\Delta^0_\alpha$ means $\*\Sigma^0_\alpha$ and $\*\Pi^0_\alpha$.
\end{itemize}
\end{definition}

\begin{example}
In a second-countable $T_0$ space $X$, the equality relation is $\*\Pi^0_2$ in $X^2$:
\begin{align*}
x = y  \iff  \forall \text{ basic open } U\, (x \in U \iff y \in U).
\end{align*}
\end{example}

\begin{definition}
\defn{Sierpinski space} is the topological space $\#S := 2$ with $\{1\}$ open but not closed.
\end{definition}

\begin{definition}[{de~Brecht \cite{dBqpol}\footnotemark}]
\footnotetext{This was not the original definition used in \cite{dBqpol}, but was proved to be equivalent in that paper.}
\label{def:qpol}
A \defn{quasi-Polish space} is a topological space homeomorphic to a $\*\Pi^0_2$ subspace of $\#S^\#N$.
\end{definition}

In \cite{dBqpol}, de~Brecht introduced quasi-Polish spaces and proved that they satisfy almost all of the usual descriptive set-theoretic properties of Polish spaces, including the following which we will freely use.
For proofs, see \cite{dBqpol} or \cite{Cqpol}.
\begin{eqenum}

\item \label{it:qpol-t0-cb-baire}
All quasi-Polish spaces are $T_0$, second-countable, and obey the Baire category theorem (which automatically then holds also for intersections of dense $\*\Pi^0_2$ sets, not just dense $G_\delta$).

\item \label{it:qpol-pol}
A topological space is Polish iff it is quasi-Polish and regular ($T_3$).

\item \label{it:qpol-dis}
If $X$ is a quasi-Polish space, and $A_i \subseteq X$ are countably many $\*\Sigma^0_\alpha$ sets, then there is a finer quasi-Polish topology containing each $A_i$ and contained in $\*\Sigma^0_\alpha(X)$.
In more detail,
\begin{enumerate}[label=(\alph*)]
\item \label{it:qpol-dis:clopen}
adjoining a single $\*\Delta^0_2$ set to the topology of $X$ preserves quasi-Polishness;
\item \label{it:qpol-dis:join}
if the intersection of countably many quasi-Polish topologies contains a quasi-Polish topology, then their union generates a quasi-Polish topology.
\end{enumerate}

\item \label{it:qpol-sbor}
A quasi-Polish space can be made zero-dimensional Polish by adjoining countably many closed sets to the topology.

\item \label{it:qpol-prod}
Countable products of quasi-Polish spaces are quasi-Polish.

\item \label{it:qpol-loc}
A space with a countable cover by open quasi-Polish subspaces is quasi-Polish.
In particular, a countable disjoint union of quasi-Polish spaces is quasi-Polish.

\item \label{it:qpol-pi02}
A subspace of a quasi-Polish space is quasi-Polish iff it is $\*\Pi^0_2$.

\item \label{it:qpol-openquot}
A continuous open $T_0$ quotient of a quasi-Polish space is quasi-Polish.
Conversely, every quasi-Polish space is an open quotient of a zero-dimensional Polish space.

\end{eqenum}

\begin{definition}
\label{def:lowpow}
For a topological space $X$, its \defn{lower powerspace} $\@F(X)$ is the space of all closed subsets $F \subseteq X$ equipped with the topology generated by the open subbasis consisting of
\begin{align*}
\Dia U := \{F \in \@F(X) \mid F \cap U \ne \emptyset\}
\end{align*}
for each open $U \subseteq X$.
Equivalently, fix any open basis $\@U$ for $X$; then the topology on $\@F(X)$ is induced via the embedding
\begin{align*}
\yesnumber
\label{eq:lowpow}
\@F(X) &--> \#S^{\@U} \\
F &|--> \{U \in \@U \mid F \cap U \ne \emptyset\}.
\end{align*}
If $X$ is quasi-Polish, then so is $\@F(X)$ \cite{dBKpowsp} (see also \cref{eq:estr-osat-pi02:coidl} below).

We will also need the \defn{fiberwise lower powerspace} $\@F_X(Y)$ of a continuous map $p : Y -> X$ (thought of as a continuous bundle of spaces $Y_x := p^{-1}(x)$ indexed over $X$), which is the disjoint union of the lower powerspaces of the fibers $\@F(Y_x)$, i.e.,
\begin{equation*}
\@F_X(Y) := \{(x, F) \mid x \in X \AND F \in \@F(Y_x)\},
\end{equation*}
equipped with the topology generated by the first coordinate projection $\@F_X(Y) -> X$ as well as
\begin{equation*}
\Dia_X U := \{(x, F) \in \@F_X(Y) \mid F \cap U \ne \emptyset\}
\end{equation*}
for each open $U \subseteq Y$.
When $X, Y$ are quasi-Polish, so is $\@F_X(Y)$ \cite[2.5.9]{Cbk}.
\end{definition}

\section{Preliminaries on infinitary logic}
\label{sec:lo1o}

We now review some conventions on infinitary first-order logic.
Throughout, $\@L$ will denote a countable first-order language.
We note that we allow both nullary function symbols (i.e., constants) and nullary relation symbols (i.e., atomic propositions).
In particular, $\@L$ could consist entirely of nullary relation symbols, in which case we are working essentially with propositional logic.
Everything we do will also generalize straightforwardly to multi-sorted languages, although we will rarely bother to spell out the details of such a generalization.

The \defn{countably infinitary first-order logic $\@L_{\omega_1\omega}$} is constructed like the usual finitary first-order logic, but allowing countable conjunctions $\bigwedge_i \phi_i(\vec{x})$ and disjunctions $\bigvee_i \phi_i(\vec{x})$ of formulas.
We will use the symbols $\top, \bot$ for nullary conjunction and disjunction.
We adopt the convention that each $\@L_{\omega_1\omega}$ formula $\phi(\vec{x})$ may only have finitely many free variables $\vec{x}$.
For general background on $\@L_{\omega_1\omega}$, see \cite[Ch.~6]{AKstr}, \cite[Ch.~11]{Gidst}, \cite{Mlo1o}.

For a first-order $\@L$-structure $\@M$ and formula $\phi(x_0,\dotsc,x_{n-1})$, we write $\phi^\@M \subseteq M^n$ for the interpretation of $\phi$ in $\@M$.
We always allow structures to be empty.

There are completeness theorems for various deductive systems for $\@L_{\omega_1\omega}$ \cite{Klo1o}, \cite{Llo1o}, \cite{FGformal}, which say that an $\@L_{\omega_1\omega}$ sentence has a proof iff it is true in every countable $\@L$-structure.

\begin{definition}[complexity classes of $\@L_{\omega_1\omega}$ formulas]
\label{def:sigmapi}
\leavevmode
\begin{itemize}
\item
A \defn{basic formula} is a finite conjunction of atomic formulas (no negations, not even $\ne$).
\item
A \defn{positive-primitive formula} is one built from atomic formulas using $\wedge, \top, \exists$.
Up to logical equivalence, these may all be written in the prenex normal form $\exists \vec{y}\, \phi(\vec{x},\vec{y})$ where $\phi$ is basic; we will also call this a $\exists{\wedge}$ formula.
\item
A \defn{$\Sigma_1$ formula} is built from atomic formulas using $\wedge, \top, \exists, \bigvee$.
These have a ${\bigvee}\exists{\wedge}$ normal form.
\item
For $\alpha \ge 2$, a \defn{$\Sigma_\alpha$ formula} is built from $\Sigma_\beta$ formulas and their negations, for $\beta < \alpha$, using $\wedge, \top, \exists, \bigvee$.
These have a normal form
\begin{align*}
\bigvee_i \exists \vec{y}_i\, \paren[\big]{\phi_i(\vec{x},\vec{y}_i) \wedge \neg \psi_i(\vec{x},\vec{y}_i)}
\end{align*}
where each $\phi_i, \psi_i$ is $\Sigma_\beta$ for some $\beta < \alpha$.
\item
For $\alpha \ge 2$, a \defn{$\Pi_\alpha$ formula} is the dual of a $\Sigma_\alpha$ formula, or equivalently, for $\phi_i,\psi_i$ as above,
\begin{align*}
\bigwedge_i \forall \vec{y}_i\, \paren[\big]{\phi_i(\vec{x},\vec{y}_i) -> \psi_i(\vec{x},\vec{y}_i)}.
\end{align*}
\end{itemize}
\end{definition}

\begin{convention}
\label{cvt:geom}
We will assume whenever convenient that every $\@L_{\omega_1\omega}$ formula is built using $\wedge, \top, \exists, \bigvee, \neg$ only, with $\bigwedge, \forall$ treated as abbreviations for $\neg \bigvee \neg, \neg \exists \neg$ respectively.
Thus, every formula is $\Sigma_\alpha$ in the above sense for some $\alpha$.
\end{convention}

The above definitions of $\Sigma_\alpha, \Pi_\alpha$ are not the standard ones found in e.g., \cite{AKstr}, which count all finitary quantifier-free formulas as $\Sigma_1$.
(They are \emph{almost} the ``positive $\Sigma^p_\alpha$, $\Pi^p_\alpha$ formulas'' of \cite{BFRSVle}, except that we do not even admit $\ne$ as $\Sigma_1$.)
While the standard definitions interact well with the Borel hierarchy of the usual zero-dimensional Polish space parametrizing $\@L$-structures on $\#N$ (see \cite[Ch.~11]{Gidst}, \cite[\S16.C]{Kcdst}, and \cref{ex:estr-infdec} below), the above definitions are the natural counterpart to Selivanov's non-Hausdorff Borel hierarchy (\cref{def:borel}) for quasi-Polish spaces.

Just as the non-Hausdorff Borel hierarchy generalizes the usual Borel hierarchy for Polish spaces, so too can the above ``non-Hausdorff'' hierarchy of formulas be regarded as generalizing the usual hierarchy.
For given a theory $\@T$, we may replace it with another $\@T'$ whose $\Sigma_\alpha, \Pi_\alpha$ formulas, in the above sense, correspond to the usual $\Sigma_\alpha, \Pi_\alpha$ formulas of $\@T'$, via the following standard device:

\begin{definition}
\label{def:morley}
A \defn{fragment} of $\@L_{\omega_1\omega}$ is a set $\@F$ of formulas which contains all atomic formulas and is closed under variable substitutions and subformulas.
(Note that this generalizes Sami's notion of fragment from \cite{Svaught} which requires closure under $\neg$, which in turn generalizes the more common notion requiring closure under all finitary operations \cite[11.2.3]{Gidst}.)

The \defn{Morleyization} of a fragment $\@F$ of $\@L_{\omega_1\omega}$ consists of an expanded language $\@L' \supseteq \@L$ with a new $n$-ary relation symbol $R_\phi$ for each $\phi(x_0,\dotsc,x_{n-1}) \in \@F$, together with the following $\Pi_2$ $\@L'$-theory $\@T'$ which says that this is an expansion by definitions:
\begin{align*}
\forall \vec{x}\, \paren[\big]{R_\phi(\vec{x}) &<-> \phi(\vec{x})} \quad \text{for atomic $\phi$}, \\
\forall \vec{x}\, \paren[\big]{R_{\phi \wedge \psi}(\vec{x}) &<-> R_\phi(\vec{x}) \wedge R_\psi(\vec{x})}, \\
\forall \vec{x}\, \paren[\big]{R_\top(\vec{x}) &<-> \top}, \\
\forall \vec{x}\, \paren[\big]{R_{\bigvee_i \phi}(\vec{x}) &<-> \bigvee_i R_\phi(\vec{x})}, \\
\forall \vec{x}\, \paren[\big]{R_{\exists y\, \phi}(\vec{x}) &<-> \exists y\, R_\phi(\vec{x},y)}, \\
\forall \vec{x}\, \paren[\big]{R_\phi(\vec{x}) \wedge R_{\neg \phi}(\vec{x}) &-> \bot}, \\
\forall \vec{x}\, \paren[\big]{\top &-> R_\phi(\vec{x}) \vee R_{\neg \phi}(\vec{x})},
\end{align*}
whenever the formulas in the subscripts are in $\@F$, and adopting here \cref{cvt:geom}.
Then each $\@L$-structure $\@M$ has a unique expansion to a model $\@M'$ of $\@T'$, where $R_\phi^{\@M'} := \phi^\@M$; and $\Sigma_\alpha$ $\@L'$-formulas $\phi'$ up to equivalence mod $\@T'$ are in bijection with $\@L$-formulas $\phi$ which are \defn{``$\Sigma_\alpha$ over $\@F$''}, meaning constructed as in $\Sigma_\alpha$-formulas but replacing atomic formulas with formulas in $\@F$.
We also call such an expansion $\@M'$ a \defn{Morleyization of the structure $\@M$} (over by the fragment $\@F$).

If we start with an $\@L$-theory $\@T$, then we may instead add the above axioms to $\@T$ to obtain a new theory $\@T' \supseteq \@T$, whose models correspond bijectively to models of $\@T$.
We may furthermore replace each original axiom $\phi \in \@T$ which is ``$\Pi_\alpha$ over $\@F$'' with the equivalent $\Pi_\alpha$ $\@L'$-axiom $\phi'$.
\end{definition}

\begin{example}
\label{ex:morley-neg}
If we Morleyize the fragment $\@F$ consisting of all atomic and negations of atomic formulas, we obtain a theory $\@T'$ modulo which the $\Sigma_\alpha, \Pi_\alpha$ $\@L'$-formulas, in our ``non-Hausdorff'' sense, are equivalently the $\Sigma_\alpha, \Pi_\alpha$ $\@L$-formulas in the traditional sense.
\end{example}

\begin{example}
\label{ex:prop}
Suppose $\@L$ is a countable \emph{propositional} language, consisting only of nullary relation symbols.
It is best to regard such a language as being $0$-sorted, so that an ``$\@L$-structure'' has no underlying set, and is simply a truth assignment $\@M : \@L -> 2$.
The $\Sigma_1$ formulas define the open sets of a topology, namely the Sierpinski power topology on $\#S^\@L$ (also known as the \emph{Scott topology}).
The $\Sigma_\alpha, \Pi_\alpha$ formulas define precisely the $\*\Sigma^0_\alpha, \*\Pi^0_\alpha$ subsets of $\#S^\@L$.
Thus, quasi-Polish spaces are up to homeomorphism precisely the spaces of models of countable $\Pi_2$ propositional theories.

The Morleyization of the preceding remark corresponds in this case to changing topology on $\#S^\#N$ to turn it into $2^\#N$, as in \cref{it:qpol-sbor}.
More generally, Morleyization of an arbitrary countable fragment $\@F$ corresponds to changing topology to make countably many Borel sets open as in \cref{it:qpol-dis}.

(In \cref{def:estr-morley}, we relate Morleyization of \emph{non-propositional} languages to change of topology.)
\end{example}

\begin{remark}
\label{rmk:types}
More generally, given an arbitrary countable language $\@L$ and countable $\Pi_2$ $\@L$-theory $\@T$, and $n \in \#N$, the $\Sigma_1$ formulas $\phi(x_0,\dotsc,x_{n-1})$ up to $\@T$-provable equivalence yield a quasi-Polish topology on the space of \defn{$\Sigma_1$ $n$-types of $\@T$}, meaning countably prime (i.e., complement closed under countable disjunctions) filters of such formulas, or equivalently (by the completeness theorem) complete $\Sigma_1$ theories of $n$-tuples in countable models of $\@T$.
When $n = 0$, this space has equivalent names in other contexts: the \emph{localic reflection} of a topos (see \cite[A4.6.12]{Jeleph}), and the \emph{topological ergodic decomposition} of a Polish group(oid) action (see \cite[10.3]{Kerg}, \cite[\S2]{Cmd}).
\end{remark}

\section{Étale spaces and structures}

\begin{definition}
Let $X$ be a topological space.
An \defn{étale space over $X$} is another topological space $Y$ equipped with a continuous map $p : Y -> X$ which is a local homeomorphism, i.e., $Y$ has an open cover consisting of open sets $S \subseteq Y$ such that $p|S : S -> X$ is an open embedding.
We will refer to such an $S \subseteq Y$ as an \defn{open section}, and to $p$ as the \defn{projection map}.
\end{definition}

We may think of an étale space as a bundle of discrete sets indexed over $X$, namely the fibers $Y_x := p^{-1}(x)$, with the discreteness witnessed uniformly across all fibers by the open sections.
For this reason, when discussing étale spaces, we usually emphasize the space $Y$ rather than the map $p$.
For general background on étale spaces (and their close relatives, sheaves), see \cite{Tsh}.

\begin{remark}
By \cref{it:qpol-loc}, for a quasi-Polish base space $X$, an étale space $Y$ over it is quasi-Polish iff it is second-countable, in which case it has a countable basis of open sections.
\end{remark}

\begin{definition}
\label{def:pb}
If $p : Y -> X$ is an étale space, and $f : Z -> X$ is a continuous map, we write $f^*(Y) = Z \times_X Y$ for the \defn{pullback} or \defn{fiber product}
\begin{equation*}
f^*(Y) = Z \times_X Y := \{(z,y) \in Z \times Y \mid f(z) = p(y)\}
\end{equation*}
fitting into a commutative square:
\begin{equation*}
\begin{tikzcd}
f^*(Y) \dar["f^*(p)"'] \rar & Y \dar["p"] \\
Z \rar["f"] & X
\end{tikzcd}
\end{equation*}
Here $f^*(p)$ is the first coordinate projection, and turns $f^*(Y)$ into an étale space, with a cover by open sections $f^*(S)$ for open sections $S \subseteq Y$.
The second projection yields canonical bijections
\begin{equation*}
f^*(Y)_z \cong Y_{f(z)}.
\end{equation*}

If $Z$ is also étale over $X$, then so is $f^*(Y) = Z \times_X Y$, with open sections consisting of $T \times_X S$ for open sections $T \subseteq Z$ and $S \subseteq Y$.
We also write
\begin{equation*}
Y^n_X := Y \times_X \dotsb \times_X Y
\end{equation*}
for the $n$-fold fiber power.
When $n = 0$, by convention $Y^0_X := X$.
Sometimes, it is convenient to use the following alternative (canonically isomorphic to the above) definition of $Y^n_X$ that explicitly records the basepoint in $X$, hence extends ``uniformly'' to the case $n = 0$:
\begin{equation}
\label{eq:pb-based}
Y^n_X := \set{(x,y_0,\dotsc,y_{n-1}) \in X \times Y^n}{x = p(y_0) = \dotsb = p(y_{n-1})}.
\end{equation}
\end{definition}

\begin{remark}
It is easily seen that an equivalent characterization of local homeomorphism $p : Y -> X$ is that both $p$ and the diagonal embedding $Y `-> Y \times_X Y$ are open maps.
The latter can be thought of as saying that the equality relation is open in each fiber, uniformly across all fibers.
\end{remark}

\begin{remark}
If $p : Y -> X$ and $q : Z -> X$ are two étale spaces over the same base, and $f : Y -> Z$ is a continuous map \defn{over $X$}, i.e., making the triangle
\begin{equation*}
\begin{tikzcd}
Y \drar["p"'] \ar[rr,"f"] && Z \dlar["q"] \\
& X
\end{tikzcd}
\end{equation*}
commute, then $f$ is automatically open, since for an open $p$-section $S \subseteq Y$, we have
\begin{align*}
f(S) = \bigcup_T \paren[\big]{T \cap q^{-1}(p(S \cap f^{-1}(T)))}
\end{align*}
where $T$ varies over open $q$-sections in $Z$.
It follows that each such $S$ is an open $f$-section, whence $f$ is in fact étale.
\end{remark}

\begin{remark}
If $p : Y -> X$ is a second-countable étale space, then $p$ (is not only open but) maps $\*\Sigma^0_\alpha$ sets onto $\*\Sigma^0_\alpha$ sets, since this is clearly true for subsets of each open section $S \subseteq Y$.

It follows that in the preceding remark, assuming $Y$ is second-countable, $f$ also maps $\*\Sigma^0_\alpha$ sets onto $\*\Sigma^0_\alpha$ sets.
\end{remark}

\begin{definition}
\label{def:estr}
Let $\@L$ be a countable language, $X$ be a topological space.
An \defn{étale $\@L$-structure} $\@M$ over $X$ is defined by ``internalizing the usual definition of first-order structure into the category of étale spaces over $X$''.
Concretely, $\@M$ consists of the following data:
\begin{itemize}
\item  an underlying étale space $p : M -> X$ (in place of an underlying set);
\item  for each $n$-ary function symbol $f \in \@L$, a continuous map $f^\@M : M^n_X -> M$ over $X$;
\item  for each $n$-ary relation symbol $R \in \@L$, an open set $R^\@M \subseteq M^n_X$.
\end{itemize}
By an abuse of notation, we will often refer to these data as ``the étale structure $p : \@M -> X$''.

Given such $\@M$, we may inductively interpret each term $\tau(x_0,\dotsc,x_{n-1})$ in the obvious manner to get a continuous map $\tau^\@M : M^n_X -> M$ over $X$.
We may then interpret each $\Sigma_\alpha$ $\@L_{\omega_1\omega}$ formula $\phi(x_0,\dotsc,x_{n-1})$ to get a subset $\phi^\@M \subseteq M^n_X$, which is $\*\Sigma^0_\alpha$ assuming that $M$ is second-countable, by an easy induction using the three preceding remarks in the $=, \exists$ cases.

If $\phi$ is a sentence, then $\phi^\@M \subseteq M^0_X = X$ (recall \cref{def:pb}); if $\phi^\@M = X$, then we call $\@M$ an \defn{étale model of $\phi$}, denoted $\@M |= \phi$.
\end{definition}

\begin{remark}
\label{rmk:estr-cts}
For an étale structure $p : \@M -> X$, for each $x \in X$, we may restrict everything to the fiber over $x$ to get an ordinary structure $\@M_x$.
Thus, we may think of $\@M$ as a ``continuous bundle of discrete structures, indexed over $X$'', or as a ``continuous map'' (as explained in the introduction):
\begin{align*}
\@M : X &--> \{\text{all $\@L$-structures}\} \\
x &|--> \@M_x.
\end{align*}
Here the right-hand side is meant in a vague, informal sense; we do not literally mean a continuous map to a topological space whose elements are $\@L$-structures.
The whole point is that $\@M$ provides a way to regard $X$ itself as a ``space of structures''.
(One may make the above map precise using the \emph{classifying topos of $\@L$-structures}; see \cite[D3]{Jeleph}.)

For each $n$-ary formula $\phi$, the fibers $\phi^\@M_x$ of $\phi^\@M \subseteq M^n_X$ are the ordinary interpretations $\phi^{\@M_x}$.
In particular, for a sentence $\phi$, to say that $\@M |= \phi$ just means $\@M_x |= \phi$ for each $x$.
\end{remark}

\begin{example}
\label{ex:estr-prop}
For a countable propositional language $\@L$, regarded as a 0-sorted first-order language as in \cref{ex:prop}, an étale $\@L$-structure $p : \@M -> X$ consists simply of an open set $P^\@M \subseteq X$ for each 0-ary relation symbol $P \in \@L$, hence literally corresponds to a continuous map $X -> \#S^\@L$ (whose $P$th coordinate is the indicator function of $P^\@M$).
\end{example}

\begin{remark}
\label{rmk:estr-pb}
If $p : \@M -> X$ is an étale structure, and $f : Z -> X$ is a continuous map, then we have a \defn{pullback structure} $f^*(\@M) -> Z$, given by pulling back the underlying étale space $M$ and the interpretations of all the symbols (\cref{def:pb}).
Clearly, for $z \in Z$ and a formula $\phi$,
\begin{align*}
f^*(\@M)_z &\cong \@M_{f(z)}, \\
\phi^{f^*(\@M)} &= f^*(\phi^\@M).
\end{align*}
In particular, if $\@M$ satisfies some theory, then so does $f^*(\@M)$.
\end{remark}

\begin{definition}
\label{def:estr-morley}
Let $p : \@M -> X$ be a second-countable étale structure, and $\@F$ be a countable fragment of $\@L_{\omega_1\omega}$ as in \cref{def:morley}, with Morleyization $\@T'$.
We may then Morleyize each fiber $\@M_x$ over $\@F$ as in \cref{def:morley}; however, the resulting bundle may no longer be an étale structure in the original topologies on $M, X$.
Instead, we refine the topologies as follows.

Let $X'$ be $X$ with the finer topology generated by all subbasic open sets of the form
\begin{align*}
p(U \cap \phi^\@M) = \set*{x \in X}{\exists \vec{a} \in U_x\, (\phi^{\@M_x}(\vec{a}))},
\end{align*}
where $U \subseteq M^n_X$ is a (basic) open set and $\phi \in \@F$ is an $n$-ary formula.
This topology indeed refines that of $X$, since we may take $n = 0$ and $\phi = \top$.
Since for $m$-ary $\phi$ and $n$-ary $\psi$,
\begin{align*}
p(U \cap \phi^\@M) \cap p(V \cap \psi^\@M) = p\paren*{(U \times_X V) \cap (\phi(x_0, \dotsc, x_{m-1}) \wedge \psi(x_m, \dotsc, x_{m+n-1}))^\@M},
\end{align*}
a basis for this topology consists of all sets $p(U \cap \phi^\@M)$ as above but now with $\phi$ ``basic over $\@F$'', i.e., a finite conjunction of formulas in $\@F$.

Now let $p' : \@M' -> X'$ be the pullback of $\@M$ along the identity $X' -> X$.
Thus, for each $n \in \#N$, an open basis for $M'^n_{X'}$ consists of
\begin{equation*}
U \cap p^{-1}(p(V \cap \phi^\@M))
\end{equation*}
for $U \subseteq M^n_X$ (basic) open, $V \subseteq M^m_X$ (basic) open for some other $m \in \#N$, and $\phi$ a finite conjunction of $m$-ary formulas in $\@F$.
For $U = V \subseteq M^n_X$ an open section, the above set becomes simply $U \cap \phi^\@M$; taking a union over all $U$ yields that $\phi^\@M \subseteq M'^n_{X'}$ is open for each $n$-ary $\phi \in \@F$, so that we may canonically expand $\@M'$ to an étale model of $\@T'$ over $X'$, which we call the \defn{Morleyization of the étale structure $\@M$} (over $\@F$).
\end{definition}

\begin{lemma}
If $X$ above is quasi-Polish, then so is $X'$.
\end{lemma}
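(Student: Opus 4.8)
The plan is to realize $X'$ as the result of adjoining countably many Borel sets of a single bounded complexity level to the quasi-Polish topology of $X$, and then to invoke \cref{it:qpol-dis}. The work is therefore split into two parts: a counting argument showing the defining subbasis is countable, and a complexity estimate showing all the adjoined sets lie in a common class $\*\Sigma^0_\alpha(X)$.

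First I would observe that the subbasis defining $X'$ is countable. Since $\@M$ is a second-countable étale structure and $X$ is quasi-Polish (hence second-countable), each fiber power $M^n_X$ is second-countable, so there are only countably many basic open sets $U \subseteq M^n_X$ across all $n$; and the fragment $\@F$ is countable by hypothesis. Hence there are only countably many subbasic sets $p(U \cap \phi^\@M)$, and in particular $X'$ is second-countable. Next I would bound the complexity of each such set as a subset of $X$: by \cref{cvt:geom} each $\phi \in \@F$ is $\Sigma_{\alpha(\phi)}$ for some countable ordinal $\alpha(\phi)$, so by the interpretation clause of \cref{def:estr} (using that $M$ is second-countable) its interpretation $\phi^\@M \subseteq M^n_X$ is $\*\Sigma^0_{\alpha(\phi)}$, and intersecting with the open set $U$ keeps it $\*\Sigma^0_{\alpha(\phi)}$. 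Because the projection $M^n_X \to X$ is a second-countable étale map, it carries $\*\Sigma^0_{\alpha(\phi)}$ sets onto $\*\Sigma^0_{\alpha(\phi)}$ sets (by the remark that a second-countable étale map preserves each $\*\Sigma^0_\alpha$), so each subbasic set $p(U \cap \phi^\@M)$ is $\*\Sigma^0_{\alpha(\phi)}$ in $X$.

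Finally, since there are only countably many of these sets, I would set $\alpha := \sup_{(U,\phi)} \alpha(\phi) < \omega_1$, so that every subbasic set is $\*\Sigma^0_\alpha(X)$. The topology of $X'$ is then exactly the topology generated over the quasi-Polish topology of $X$ by these countably many $\*\Sigma^0_\alpha$ sets, whence \cref{it:qpol-dis} gives that $X'$ is quasi-Polish.

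The only point requiring care — the mild obstacle — is the uniformity in complexity: \cref{it:qpol-dis} is stated for countably many sets at a \emph{single} level $\*\Sigma^0_\alpha$, whereas the formulas in $\@F$ a priori have unbounded complexity. Collapsing to one level is legitimate precisely because the subbasis is countable, so the supremum of the countably many ordinals $\alpha(\phi)$ remains a countable ordinal. I would also note explicitly that it is the \emph{generated} topology $X'$ that we need to be quasi-Polish, which is what parts \ref{it:qpol-dis:clopen}–\ref{it:qpol-dis:join} of \cref{it:qpol-dis} deliver, rather than merely the existence of some finer quasi-Polish refinement.
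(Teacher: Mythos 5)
The counting and complexity estimates in your first two paragraphs are fine, but the final step is where the argument breaks: \cref{it:qpol-dis} asserts only that there \emph{exists} a finer quasi-Polish topology containing the given $\*\Sigma^0_\alpha$ sets, not that the topology they \emph{generate} over $X$ is quasi-Polish, and these are genuinely different statements. Indeed, the topology generated by adjoining even a single $\*\Sigma^0_2$ set $A$ need not be quasi-Polish: the map $x \mapsto (x, \chi_A(x))$ is a topological embedding of $(X, \langle \tau_X \cup \{A\}\rangle)$ into $X \times \#S$, so by \cref{it:qpol-pi02} quasi-Polishness of the generated topology would force the image to be $\*\Pi^0_2$ in $X \times \#S$, hence its section $A$ over the open point of $\#S$ to be $\*\Pi^0_2$ in the original $X$ --- which fails, e.g., for $X$ the real line and $A$ the rationals. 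This is exactly why part \cref{it:qpol-dis:clopen} is restricted to $\*\Delta^0_2$ sets; the proof of the general statement of \cref{it:qpol-dis} secretly adjoins many auxiliary lower-complexity sets and so produces a topology strictly finer than the generated one. Your closing claim that parts \cref{it:qpol-dis:clopen}--\cref{it:qpol-dis:join} ``deliver the generated topology'' is therefore not correct above the $\*\Delta^0_2$ level, and collapsing all the formulas to a single level $\alpha$ does not help.

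What saves the lemma --- and what the paper's proof actually does --- is that the generating sets $p(U \cap \phi^\@M)$ are not arbitrary Borel sets: since the fragment $\@F$ is closed under subformulas (\cref{def:morley}), the subbasis of $X'$ already contains $p(V \cap \psi^\@M)$ for every subformula $\psi$ of every $\phi \in \@F$. One then inducts on $\phi$: atomic formulas contribute open sets; $\bigvee$ and $\exists$ contribute unions of sets already adjoined, combined via \cref{it:qpol-dis}\cref{it:qpol-dis:join}; and for $\phi = \neg\psi$ with $U$ an open section, injectivity of $p|U$ gives $p(U \cap \phi^\@M) = p(U) \setminus p(U \cap \psi^\@M)$, which is $\*\Delta^0_2$ in the topology already refined by the subformula sets, so \cref{it:qpol-dis}\cref{it:qpol-dis:clopen} applies. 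Your argument never uses closure of $\@F$ under subformulas, which is the essential hypothesis here.
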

\begin{proof}
Adjoining each $p(U \cap \phi^\@M)$ (in the notation above) to the topology of $X$ yields a quasi-Polish topology, by an induction on $\phi$ using \cref{it:qpol-dis}\cref{it:qpol-dis:clopen} and \cref{it:qpol-dis:join} to handle $\neg, \bigvee, \exists$.
\end{proof}

\begin{remark}
\label{rmk:estr-morley}
From the perspective of \cref{rmk:estr-cts}, the ``space of all models of $\@T'$'' can be seen as the ``space of all $\@L$-structures, with a finer topology (generated by $\@F$)''.
The Morleyized base space $X'$ above is then the result of ``pulling back this finer topology along $\@M : X -> \{\text{all $\@L$-structures}\}$'':
\begin{equation*}
\begin{tikzcd}
X' \dar["\@M'"'] \rar & X \dar["\@M"] \\
\{\text{all $\@T'$-models}\} \rar & \{\text{all $\@L$-structures}\}
\end{tikzcd}
\end{equation*}
\end{remark}

\begin{definition}
\label{def:isogpd}
Let $p : \@M -> X$ be an étale structure.
Its \defn{isomorphism groupoid} $\Iso_X(\@M)$ is the set of all triples $(x,y,g)$ where $x, y \in X$ and $g : \@M_x \cong \@M_y$ is an isomorphism.
We equip it with the topology generated by the first and second projections to $X$ as well as the subbasic open sets
\begin{align*}
\brbr{U |-> V} := \set[\big]{(x,y,g) \in \Iso_X(\@M)}{g(U_x) \cap V_y \ne \emptyset}
\end{align*}
for open $U, V \subseteq M$.
Equivalently, $\Iso_X(\@M)$ has an open basis consisting of the sets $\brbr{U |-> V}$, defined in the same way, for all open $U, V \subseteq M^n_X$ where $n \in \#N$.

Since the expression $\brbr{U |-> V}$ is easily seen to preserve unions in each of the variables $U, V$, it is enough to consider $U, V$ in any basis, e.g., a basis of open sections.
This reveals the topology on $\Iso_X(\@M)$ to be a generalization of the usual pointwise convergence topology on the automorphism group of a single structure, to which $\Iso_X(\@M)$ reduces when $X = 1$.

As its name suggests, the isomorphism groupoid $\Iso_X(\@M)$ is (the space of morphisms of) a \defn{topological groupoid} with space of objects $X$.
The domain and codomain of a morphism are
\begin{align*}
\dom(x,y,g) &:= x, &
\cod(x,y,g) &:= y.
\end{align*}
These maps $\dom, \cod : \Iso_X(\@M) -> X$ and the evident composition, identity, and inverse operations are easily seen to be continuous.
(For the abstract notion of topological groupoid, see \cref{def:gpd-subgpd}.)

We have a natural action of $\Iso_X(\@M)$ on the underlying étale space $M -> X$, where each morphism $(x,y,g) : x -> y \in \Iso_X(\@M)$ acts as $g : M_x -> M_y$; this action is easily seen to be jointly continuous.
Similarly, $\Iso_X(\@M)$ acts continuously on the fiber powers $M^n_X$.

We will often abuse notation and refer to $(x,y,g) \in \Iso_X(\@M)$ simply as $g : \@M_x \cong \@M_y$.
\end{definition}

\begin{lemma}
\label{thm:isogpd-qpol}
If $\@L$ is a countable language and $p : \@M -> X$ is a second-countable étale $\@L$-structure over a quasi-Polish space $X$, then $\Iso_X(\@M)$ is quasi-Polish.
\end{lemma}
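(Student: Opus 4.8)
The strategy is to realize $\Iso_X(\@M)$ as a $\*\Pi^0_2$ subspace of a suitable fiberwise lower powerspace and then invoke \cref{it:qpol-pi02}. Since $M$ is a second-countable étale space over the quasi-Polish base $X$, it is itself quasi-Polish, and hence so are $M \times M$ and $X^2 = X \times X$ by \cref{it:qpol-prod}. Consider the continuous projection $M \times M -> X^2$, $(a,b) |-> (p(a),p(b))$, and form its fiberwise lower powerspace $\@F_{X^2}(M \times M)$ (\cref{def:lowpow}), which is quasi-Polish. I would embed
\begin{align*}
\iota : \Iso_X(\@M) &--> \@F_{X^2}(M \times M), &
(x,y,g) &|--> (x,y,\Gamma_g),
\end{align*}
sending an isomorphism to (the basepoint and) its graph $\Gamma_g = \{(a,g(a)) \mid a \in M_x\}$, which lies in the discrete fiber $M_x \times M_y$ and is therefore automatically closed. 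This $\iota$ is injective since $g$ is recoverable from $\Gamma_g$. To see it is a topological embedding, I would check that its generating subbasic open sets pull back correctly: the preimage of the projection to $X^2$ recovers the first and second projections of $\Iso_X(\@M)$ to $X$, while for open $U, V \subseteq M$ one has $\iota^{-1}(\Dia_{X^2}(U \times V)) = \brbr{U |-> V}$, exactly the subbasis from \cref{def:isogpd}. Since these are precisely the $\iota$-preimages of a subbasis of the codomain, $\iota$ is a homeomorphism onto its image.

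It then remains to show that $\operatorname{im}(\iota)$ is $\*\Pi^0_2$, i.e., to cut out, by a countable intersection of $\*\Pi^0_2$ conditions, those $(x,y,F) \in \@F_{X^2}(M \times M)$ for which $F$ is the graph of an isomorphism $\@M_x \cong \@M_y$. Fixing a countable basis of open sections $\{S_j\}$ for $M$, I would express each defining clause using that, because the $S_j$ are sections, membership of $F$ in $\Dia_{X^2}(S_j \times S_k)$ over $(x,y)$ asserts exactly $((S_j)_x, (S_k)_y) \in F$. The clauses are: totality, surjectivity, functionality, injectivity, preservation and reflection of each relation $R$, and commutation with each function symbol $h$. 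For instance, totality is $\bigcap_j \bigl[\{x \in p(S_j)\} => \bigcup_k \Dia_{X^2}(S_j \times S_k)\bigr]$, an intersection of open-implies-open conditions, hence $\*\Pi^0_2$.

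\textbf{The main obstacle} will be phrasing functionality (and, symmetrically, injectivity) as a $\*\Pi^0_2$ condition, since this requires expressing ``$b = b'$'' for elements of the non-Hausdorff fiber without separating them by globally disjoint sections. The key observation is that for sections $S_k, S_l$ one has $(S_k)_y = (S_l)_y$ if and only if $y \in p(S_k \cap S_l)$, and $p(S_k \cap S_l)$ is \emph{open}; so functionality becomes
\begin{align*}
\bigcap_{j,k,l}\bigl[\,\Dia_{X^2}(S_j \times S_k) \cap \Dia_{X^2}(S_j \times S_l) => \{(x,y,F) \mid y \in p(S_k \cap S_l)\}\,\bigr],
\end{align*}
again open-implies-open, hence $\*\Pi^0_2$. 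The relational clauses use that $\vec a \in R^{\@M_x}$ is the open condition $x \in p(R^\@M \cap (S_{j_0} \times_X \dotsb \times_X S_{j_{n-1}}))$ on $x$, giving a biconditional between two open sets under an open premise; and the functional clauses use that a continuous map over $X$ carries the open section $S_{j_0} \times_X \dotsb \times_X S_{j_{n-1}}$ to an open section $H_{\vec\jmath} := h^\@M(S_{j_0} \times_X \dotsb \times_X S_{j_{n-1}})$, reducing commutation with $h$ to $\Dia_{X^2}(H_{\vec\jmath} \times H_{\vec k})$ under an open premise (the case $n = 0$ handling constants). Each clause is thus $\*\Pi^0_2$, their countable intersection is $\*\Pi^0_2$, and so $\operatorname{im}(\iota)$ is quasi-Polish by \cref{it:qpol-pi02}; transporting along the homeomorphism $\iota$ completes the proof.
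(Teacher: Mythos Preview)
Your proposal is correct and follows essentially the same route as the paper: embed $\Iso_X(\@M)$ into the fiberwise lower powerspace $\@F_{X^2}(M \times M)$ via the graph, observe this matches the defining topology, and cut out the image by countably many $\*\Pi^0_2$ conditions expressing totality, functionality (and their duals), and preservation of the language. Your functionality clause $y \in p(S_k \cap S_l)$ is a cosmetic variant of the paper's $\exists\, T_3 \subseteq T_1 \cap T_2$ with $F \cap (S \times T_3) \ne \emptyset$, and your explicit treatment of function symbols via the open sections $H_{\vec\jmath} = h^\@M(S_{j_0} \times_X \dotsb \times_X S_{j_{n-1}})$ just spells out what the paper leaves as ``similarly''.
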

\begin{proof}
First, consider the case where $\@L = \emptyset$, so that $\@M$ is just a second-countable étale space and $\Iso_X(\@M) = \{\text{all bijections between fibers}\}$.
The definition of the topology on $\Iso_X(\@M)$ amounts to embedding it in the fiberwise lower powerspace (\cref{def:lowpow}) of $p \times p : M \times M -> X \times X$ via
\begin{align*}
\Iso_X(\@M) &--> \@F_{X \times X}(M \times M) \\
(x,y,g) &|--> (x,y,\graph(g)).
\end{align*}
So we need only check that the image of this embedding is $\*\Pi^0_2$.
Fix a countable basis $\@S$ of open sections in $M$.
For $(x,y,F) \in \@F_{X \times X}(M \times M)$, $F \subseteq M_x \times M_y$ is the graph of a function iff
\begin{gather*}
\forall S \in \@S\, \paren[\big]{x \in p(S) \implies \exists T \in \@S\, (F \cap (S \times T) \ne \emptyset)}, \\
\forall S, T_1, T_2 \in \@S\, \paren[\big]{F \cap (S \times T_1), F \cap (S \times T_2) \ne \emptyset \implies \exists \@S \ni T_3 \subseteq T_1 \cap T_2\, (F \cap (S \times T_3) \ne \emptyset)}
\end{gather*}
(which say that for each $a \in M_x$, there is at least, resp., at most, one $b \in M_y$ such that $(a,b) \in F$); these conditions are $\*\Pi^0_2$ in the topology on $\@F_{X \times X}(M \times M)$.
Similarly, to say that $F^{-1}$ is the graph of a function is a $\*\Pi^0_2$ condition.

Now if $R \in \@L$ is an $n$-ary relation symbol, then to say that $g : M_x \cong M_y$ preserves $R$ means
\begin{gather*}
\forall \vec{S}, \vec{T} \in \@S^n\, \paren*{
    \begin{aligned}
    &x \in p((S_0 \times_X \dotsb \times_X S_{n-1}) \cap R^\@M)
    \AND g \in \bigcap_i \brbr{S_i |-> T_i} \\
    &\implies y \in p((T_0 \times_X \dotsb \times_X T_{n-1}) \cap R^\@M)
    \end{aligned}
}.
\end{gather*}
Similarly for $g^{-1}$ to preserve $R$, and for preservation of functions.
\end{proof}

\begin{remark}
\label{rmk:isogpd-zpol}
If $X$ and $M$ are both zero-dimensional Polish, then so is $\Iso_X(\@M)$.
Indeed, we may find countable bases for each $M^n_X$ consisting of clopen sections $S \subseteq M^n_X$ such that $p(S) \subseteq X$ is also clopen.
If $S, T \subseteq M^n_X$ are two such sections, then the basic open $\brbr{S |-> T} \subseteq \Iso_X(\@M)$ has open complement $(\dom^{-1}(p(S)) \cap \cod^{-1}(p(T)) => \brbr{S |-> \neg T}) \subseteq \Iso_X(\@M)$.
\end{remark}

\begin{remark}
\label{rmk:isogpd2}
We have the following evident generalization of $\Iso_X(\@M)$:
if $p : \@M -> X$ and $q : \@N -> Y$ are two étale structures over two base spaces, then we have a space $\Iso_{X,Y}(\@M,\@N)$ of isomorphisms between a fiber of $\@M$ and a fiber of $\@N$.
If $\@M, \@N$ are both second-countable over quasi-Polish $X, Y$, then $\Iso_{X,Y}(\@M,\@N)$ is quasi-Polish (zero-dimensional if $X, Y, M, N$ are).
\end{remark}

\section{Étale structures with $\Sigma_1$ saturations}

We now come to the crucial condition on an étale $\@L$-structure, that implies that it is a ``nice'' parametrization of countable $\@L$-structures obeying the usual model-theoretic results.

\begin{definition}
\label{def:estr-osat}
Let $\@M$ be a second-countable étale $\@L$-structure over a space $X$.
We say that $\@M$ \defn{has $\Sigma_1$-definable saturations of open sets}, or more succinctly \defn{has $\Sigma_1$ saturations}, if for every $n \in \#N$ and (basic) open set $U \subseteq M^n_X$, the saturation $\Iso_X(\@M) \cdot U$ under the natural action of $\Iso_X(\@M)$ (recall \cref{def:isogpd}) is equal to $\phi^\@M$ for some $\Sigma_1$ formula $\phi(x_0,\dotsc,x_{n-1})$.
\end{definition}

\begin{remark}
\label{rmk:estr-osat-cts}
From the perspective of \cref{rmk:estr-cts}, this means that
\begin{equation*}
\@M : X --> \{\text{all $\@L$-structures}\}
\end{equation*}
is a ``continuous open map onto its image''.
(This is literally true for a propositional language $\@L$ via \cref{ex:estr-prop}, and is made precise for general first-order $\@L$ by the notion of an \emph{open geometric surjection} between toposes; see \cite[C3.1]{Jeleph}, \cite[IX.6]{MMtopos}.)
\end{remark}

\begin{remark}
\label{rmk:estr-osat-pt}
\Cref{def:estr-osat} may also be regarded as saying that $\@M$ is ``uniformly $\Sigma_1$-atomic'': when $X = 1$, it means that every $\@L_{\omega_1\omega}$-type realized in $\@M$ is axiomatized by a $\Sigma_1$ formula.
\end{remark}

\begin{theorem}
\label{thm:estr-osat-pi02}
If $\@L$ is a countable language and $\@M$ is a second-countable étale $\@L$-structure with $\Sigma_1$ saturations over a quasi-Polish space $X$, then the class of countable $\@L$-structures isomorphic to a fiber of $\@M$ is axiomatizable by a countable $\Pi_2$ theory.
\end{theorem}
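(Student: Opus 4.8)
The plan is to produce an explicit countable $\Pi_2$ theory $\@T$ whose countable models are precisely the structures isomorphic to a fiber $\@M_x$. We already know from \cref{thm:isogpd-qpol} that $\Iso_X(\@M)$ is quasi-Polish, and by \cref{thm:estr-osat-pi02}'s hypothesis we have $\Sigma_1$ saturations at our disposal. The natural candidate for $\@T$ is generated by two kinds of axioms: first, the $\Sigma_1$-type information witnessing which open sets are nonempty and how they relate (the ``positive'' part), and second, the $\Pi_2$ axioms asserting that the realized $\Sigma_1$-types are \emph{complete} in the appropriate sense, i.e.\ that every element of a model lies in one of the $\Sigma_1$-definable saturations coming from an open section.

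I would first set up notation by fixing a countable basis $\@S$ of open sections for each fiber power $M^n_X$, and for each basic open $U \subseteq M^n_X$ let $\phi_U$ be the $\Sigma_1$ formula with $\phi_U^\@M = \Iso_X(\@M)\cdot U$ provided by the $\Sigma_1$-saturation hypothesis (\cref{def:estr-osat}). These $\phi_U$ are the ``atomic building blocks'' describing the isomorphism type of tuples. The key structural fact I would extract is that, because the saturations are $\Sigma_1$ and the action of $\Iso_X(\@M)$ is transitive on fibers sharing an isomorphism type, the family $\{\phi_U\}$ behaves like a system of principal types: each realized complete $\Sigma_1$-type over $\@M$ is axiomatized by some $\phi_U$ (this is the ``uniformly $\Sigma_1$-atomic'' reading of \cref{rmk:estr-osat-pt}). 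The theory $\@T$ I would write down consists of: (i) for each relevant implication $\phi_U \to \phi_V$ that holds in every fiber, a $\Pi_1$ (hence $\Pi_2$) axiom recording it; (ii) for each $n$ and each $\phi_U$ with $U \subseteq M^n_X$, the $\Pi_2$ ``coherence/extension'' axiom $\forall \vec{x}\,(\phi_U(\vec{x}) \to \bigvee_{W} \exists y\, \phi_W(\vec{x},y))$ asserting that every realization of a type extends to a realization of a larger type (witnessing saturation of the fibers as \emph{countable} structures); and (iii) a $\Pi_2$ ``totality/covering'' axiom $\forall \vec{x}\, \bigvee_U \phi_U(\vec{x})$ forcing every tuple to realize one of the designated $\Sigma_1$-types.

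The verification splits into two directions. Soundness --- every fiber $\@M_x$ satisfies $\@T$ --- should be routine: each axiom is by construction true across all fibers since the $\phi_U$ were defined via $\phi_U^\@M$, and the coherence/covering axioms hold because $\Iso_X(\@M)$ acts transitively on isomorphic fibers, so any realized tuple genuinely lies in its own saturation and extends within its fiber. Completeness --- every countable model $\@N \models \@T$ is isomorphic to some fiber --- is where the real work lies. Here I would build an isomorphism $\@N \cong \@M_x$ by a back-and-forth argument: the covering axiom (iii) assigns to each finite tuple $\vec{a}$ from $\@N$ a $\Sigma_1$-type $\phi_U$, the implication axioms (i) guarantee these type assignments are consistent and refine correctly, and the extension axioms (ii) provide the back-and-forth steps on both the $\@N$ side (directly) and the $\@M_x$ side (using that the $\phi_U$ are realized in $\@M$, with $x$ itself chosen so that $\@M_x$ realizes the empty-tuple type of $\@N$). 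Because everything is $\Sigma_1$-definable and the saturations are exactly the isomorphism-orbits, matching $\Sigma_1$-types across the back-and-forth produces a genuine $\@L$-isomorphism.

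The main obstacle I anticipate is the completeness direction, specifically ensuring that the back-and-forth can be driven entirely by $\Sigma_1$-type data and that a \emph{single} basepoint $x \in X$ can be located whose fiber $\@M_x$ realizes exactly the type of $\@N$. The subtlety is that two tuples with the same $\Sigma_1$-type need not a priori be in the same $\Iso_X(\@M)$-orbit unless the saturation hypothesis is leveraged correctly --- one must check that $\Sigma_1$-type equality is \emph{exactly} isomorphism of the generated substructures, not merely implied by it. I expect this to follow by induction on formula complexity from the fact that the saturations $\Iso_X(\@M)\cdot U = \phi_U^\@M$ are closed under the groupoid action, combined with a Baire category argument over the quasi-Polish space $X$ (or over $\Iso_X(\@M)$) to extract the required basepoint $x$ realizing a prescribed complete $\Sigma_1$-type; the quasi-Polishness from \cref{thm:isogpd-qpol} and the Baire property \cref{it:qpol-t0-cb-baire} are exactly what make this extraction possible.
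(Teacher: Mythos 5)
Your overall architecture matches the paper's: fix $\Sigma_1$ formulas $\phi_U$ defining the saturations $\Iso_X(\@M)\cdot U$ of basic open $U \subseteq M^n_X$, write down $\Pi_2$ axioms governing them, check soundness fiberwise, and prove completeness by a genericity argument. But there are genuine gaps in the two places where the real work happens. On the axiomatic side, your list (i)--(iii) omits two families the paper cannot do without. Nothing in your axioms ties the $\phi_U$ to the atomic $\@L$-formulas in the $\Pi_2$ direction: the paper's axiom \cref{eq:estr-osat-pi02:ax-cohom}, $\forall\vec{x}\,(\phi_U(\vec{x}) \wedge \psi(\vec{x}) \to \bigvee_{V \subseteq U \cap \psi^\@M} \phi_V(\vec{x}))$ for basic $\psi$, is what forces the eventual bijection to \emph{reflect} atomic formulas rather than merely preserve them; without it your construction yields at best a bijective homomorphism. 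You also have no analogue of the cover-refinement axioms \cref{eq:estr-osat-pi02:ax-coidl}, which encode a posite presentation of the quasi-Polish spaces $M^n_X$ and are what guarantee that the set $\@C_{\vec{b}} = \{U \mid \phi_U^\@N(\vec{b})\}$ attached to a tuple of an abstract model $\@N \models \@T$ is the trace of an actual closed set $F_{\vec{b}} \subseteq M^n_X$, rather than an abstract filter realized nowhere in $\@M$. This is precisely where quasi-Polishness of $X$ enters, and it is invisible in your sketch.

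More fundamentally, the completeness direction cannot be run as a back-and-forth that ``matches complete $\Sigma_1$-types,'' because a tuple $\vec{b} \in N^n$ does not determine a single orbit of $\@M$: it determines only the closed set $F_{\vec{b}}$ of candidate partners, which in general meets many fibers and many orbits, and the ``uniform atomicity'' of \cref{rmk:estr-osat-pt} is a statement about tuples \emph{of $\@M$}, not about tuples of an arbitrary model of $\@T$. The paper resolves this not by a Baire category argument over $X$ or over $\Iso_X(\@M)$, as you propose, but by assembling all the $F_{\vec{b}}$ into a forest of partial homomorphisms, topologizing its space of branches as a quasi-Polish lax colimit $Y$, and taking a generic branch: genericity simultaneously selects the fiber $x$, enumerations of $N$ and of $M_x$, and --- via the cohom axioms, which make the partial-isomorphism condition dense $\*\Pi^0_2$ inside each $F_{\vec{b}}$ --- the isomorphism property. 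You correctly flag this as the main obstacle, but the resolution you suggest (induction on formula complexity plus Baire category on $X$) does not supply the needed mechanism, so the proof as proposed does not go through without the additional axioms and the auxiliary space on which the category argument actually lives.
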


This result can be regarded as a common generalization of the well-known special cases alluded to by the two preceding remarks: that a continuous map between (quasi-)Polish spaces which is open onto its image has $\*\Pi^0_2$ image (the combination of \cref{it:qpol-openquot}, \cref{it:qpol-pi02}), and that an atomic model has a $\Pi_2$ Scott sentence (relative to a fragment; see \cite[11.5.7]{Gidst}).
The following proof can be seen as combining these two special cases, namely the proofs of \cref{it:qpol-openquot}, \cref{it:qpol-pi02} in \cite[10.1, 4.1]{Cqpol} and the usual back-and-forth construction of Scott sentences in e.g., \cite[\S3.3]{Mscott}.

\begin{proof}
For each $n \in \#N$, let $\@U_n$ be a countable open basis for the fiber power $M^n_X$.
We use the fact that every quasi-Polish space has a countable posite representation; see \cite[\S8]{Cqpol} for details.
Briefly, each $\vec{a} \in M^n_X$ is determined by its neighborhood filter basis $\@N_{\vec{a}} \subseteq \@U_n$; and this map $\vec{a} |-> \@N_{\vec{a}}$ yields an embedding $M^n_X `-> \#S^{\@U_n}$, which thus has $\*\Pi^0_2$ image \cref{it:qpol-pi02}.
Using that $\@U_n$ is a basis, one can manipulate a $\*\Pi^0_2$ definition of this image into the following form: for a countable binary relation $|>_n$ consisting of certain pairs $(\@V, U)$ of open covers $\@V \subseteq \@U_n$ and $U \in \@U_n$ with $\bigcup \@V = U$, such that
\begin{equation*}
\@V |>_n U \supseteq U' \in \@U_n \implies \exists \@V' |>_N U'\, \forall V' \in \@V'\, \exists V \in \@V\, (V \supseteq V')
\end{equation*}
(``every cover of $U$ refines to one of $U' \subseteq U$''),
we have that $\@A \subseteq \@U_n$ is a neighborhood filter of some $\vec{a} \in M^n_X$ iff it is a $|>_n$-prime filter, meaning a filter satisfying the further $\*\Pi^0_2$ condition
\begin{equation}
\label{eq:estr-osat-pi02:coidl}
\forall \@V |>_n U\, \paren[\big]{U \in \@A \implies \exists V \in \@V\, (V \in \@A)}.
\end{equation}
It then follows that the $\@C \subseteq \@U_n$ which are merely upward-closed and satisfy this condition correspond via \cref{eq:lowpow} to arbitrary closed sets $F \in \@F(M^n_X)$; see \cite[proof of 9.2]{Cqpol}.

For each $U \in \@U_n$, let $\phi_U(x_0,\dotsc,x_{n-1})$ be a $\Sigma_1$ formula defining $\Iso_X(\@M) \cdot U \subseteq M^n_X$.
We claim that the following $\Pi_2$ sentences axiomatize the countable $\@L$-structures isomorphic to a fiber of $\@M$.
For each $n$, let $\pi_n : M^{n+1}_X -> M^n_X$ denote the projection onto the first $n$ coordinates.
\begin{align}
\label{eq:estr-osat-pi02:ax-mono}
\forall \vec{x}\, \paren[\big]{\phi_U(\vec{x}) &-> \phi_V(\vec{x})}
&&\text{for each $U, V \in \@U_n$, $U \subseteq V$}, \\
\label{eq:estr-osat-pi02:ax-coidl}
\forall \vec{x}\, \paren[\big]{\phi_U(\vec{x}) &-> \bigvee_{V \in \@V} \phi_V(\vec{x})}
&&\text{for $\@V |>_n U$}, \\
\label{eq:estr-osat-pi02:ax-hom}
\forall \vec{x}\, \paren[\big]{\phi_U(\vec{x}) &-> \psi(\vec{x})}
&&\text{for basic $n$-ary $\psi$ and $\@U_n \ni U \subseteq \psi^\@M$}, \\
\label{eq:estr-osat-pi02:ax-cohom}
\forall \vec{x}\, \paren[\big]{\phi_U(\vec{x}) \wedge \psi(\vec{x}) &-> \bigvee_{\@U_n \ni V \subseteq U \cap \psi^\@M} \phi_V(\vec{x})}
&&\text{for basic $n$-ary $\psi$}, \\
\label{eq:estr-osat-pi02:ax-ext1}
\forall \vec{x}\, \paren[\big]{\exists y\, \phi_U(\vec{x},y) &<-> \bigvee_{\@U_n \ni V \subseteq \pi_n(U)} \phi_V(\vec{x})}
&&\text{for $U \in \@U_{n+1}$}, \\
\label{eq:estr-osat-pi02:ax-ext2}
\forall \vec{x}, y\, \paren[\big]{\phi_U(\vec{x}) &-> \bigvee_{\@U_{n+1} \ni V \subseteq \pi_n^{-1}(U)} \phi_V(\vec{x},y)}
&&\text{for $U \in \@U_n$}, \\
\label{eq:estr-osat-pi02:ax-ex}
\top &-> \bigvee_{U \in \@U_0} \phi_U.
\end{align}
It is straightforward to check that each $\@M_x$ indeed satisfies these axioms.

Now let $\@N$ be another countable $\@L$-structure satisfying these axioms.
For each $\vec{b} \in N^n$,
\begin{equation*}
\@C_{\vec{b}} := \set{U \in \@U_n}{\phi_U^\@N(\vec{b})}
\end{equation*}
is upward-closed and satisfies \cref{eq:estr-osat-pi02:coidl} by
\cref{eq:estr-osat-pi02:ax-mono},
\cref{eq:estr-osat-pi02:ax-coidl},
thus corresponds to a closed $F_{\vec{b}} \subseteq M^n_X$ such that
\begin{equation*}
F_{\vec{b}} \cap U \ne \emptyset  \iff  U \in \@C_{\vec{b}}  \iff  \phi_U^\@N(\vec{b})
\qquad \forall U \in \@U_n.
\end{equation*}
Using this equivalence, for $\@N$ to satisfy the other 5 axioms above means the following:
\begin{itemize}

\item
\cref{eq:estr-osat-pi02:ax-hom} means
$F_{\vec{b}} \cap \psi^\@M \ne \emptyset \implies \psi^\@N(\vec{b})$
for each basic formula $\psi$, which means that each $\vec{a} \in F_{\vec{b}}$ yields a well-defined partial homomorphism $\@M_x \rightharpoonup \@N$ given by $a_i |-> b_i$.

\item
\cref{eq:estr-osat-pi02:ax-cohom} means
$F_{\vec{b}} \cap U \ne \emptyset \AND \psi^\@N(\vec{b})  \implies  F_{\vec{b}} \cap U \cap \psi^\@M \ne \emptyset$,
i.e., the set of $\vec{a} \in F_{\vec{b}}$ for which the above partial homomorphism $a_i |-> b_i$ is a partial isomorphism is dense $G_\delta$.

\item
\cref{eq:estr-osat-pi02:ax-ext1} means
$(\bigcup_{d \in N} F_{\vec{b},d}) \cap U \ne \emptyset  \iff  F_{\vec{b}} \cap \pi_n(U) \ne \emptyset$,
i.e.,
$\-{\bigcup_{d \in N} F_{\vec{b},d}} = \pi_n^{-1}(F_{\vec{b}}).$

\item
In particular, $\pi_n : M^{n+1}_X -> M^n_X$ restricts to a map $F_{\vec{b},d} -> F_{\vec{b}}$;
\cref{eq:estr-osat-pi02:ax-ext2}
says it has dense image.

\item
\cref{eq:estr-osat-pi02:ax-ex} means $F_\emptyset \ne \emptyset$.

\end{itemize}

Consider now the forest of all $\vec{a} \in F_{\vec{b}}$, each of which determines a partial homomorphism as above, as well as all infinite branches through this forest, which we arrange into a space as follows:
\begin{align*}
Y_n &:= \bigsqcup_{\vec{b} \in N^n} F_{\vec{b}} = \set[\big]{(\vec{b},\vec{a}) \in N^n \times M^n_X}{\vec{a} \in F_{\vec{b}}} \subseteq N^n \times M^n_X, \\
Y_\omega &:= \projlim_{n \in \#N} Y_n = \set[\big]{(\vec{b},\vec{a}) \in N^\#N \times M^\#N_X}{\forall n \in \#N\, ((\vec{b}|n,\vec{a}|n) \in Y_n)}, \\
Y &:= \bigsqcup_{n \le \omega} Y_n.
\end{align*}
For $m \le n \le \omega$, let $\pi_{nm} : Y_n -> Y_m$ denote the projection.
Put a topology on $Y$ with an open basis consisting of, for each $m \in \#N$ and (basic) open $U \subseteq Y_m$ (where $Y_m$ has the disjoint union topology),
\begin{align*}
\Up U := \bigsqcup_{\omega \ge n \ge m} \pi_{nm}^{-1}(U).
\end{align*}
This topology is quasi-Polish; see \cite[A.2]{Cmd}, where it was called the \emph{lax colimit} of the levels $Y_n$.
Since each $\pi_n : F_{\vec{b},d} -> F_{\vec{b}}$ has dense image by \cref{eq:estr-osat-pi02:ax-ext2}, the closure in $Y$ of each $F_{\vec{b}} \subseteq Y_n \subseteq Y$ is all $F_{\vec{b}|m}$ for initial segments $\vec{b}|m$ of $\vec{b}$.
Thus each $\Up Y_n \subseteq Y$ is dense, and so $Y_\omega = \bigcap_n \Up Y_n$ is dense $G_\delta$.
And by \cref{eq:estr-osat-pi02:ax-ex}, $Y_0 = F_\emptyset \ne \emptyset$, i.e., the forest has at least one root.

We now verify that the set of all $(\vec{b},\vec{a}) \in Y$ for which the partial homomorphism $a_i |-> b_i$ is an isomorphism $\@M_x \cong \@N$ is comeager, which will complete the proof:
\begin{itemize}

\item
The set of $(\vec{b},\vec{a}) \in Y$ for which the partial homomorphism $a_i |-> b_i$ is a \emph{partial} isomorphism is dense $\*\Pi^0_2$, since it is the intersection, over all $n \in \#N$, of the union of the lower levels $Y_0, \dotsc, Y_{n-1}$ (which is closed) and $\Up$ of the disjoint union over all $\vec{b} \in N^n$ of those $\vec{a} \in \smash{F_{\vec{b}}}$ for which $a_i |-> b_i$ is a partial isomorphism; and this latter $\Up$ is dense $G_\delta$ in $\Up Y_n$ by \cref{eq:estr-osat-pi02:ax-cohom}.

\item
The set of $(\vec{b},\vec{a}) \in Y$ for which $\vec{b}$ enumerates $N$ is clearly $G_\delta$, and is dense because for a nonempty basic open $\Up (F_{\vec{b}} \cap U) \subseteq Y$ where $U \in \@U_n$, also $\smash{F_{\vec{b},d}} \cap \pi_n^{-1}(U) \ne \emptyset$ by \cref{eq:estr-osat-pi02:ax-ext2}.

\item
Finally, the set of $(\vec{b},\vec{a}) \in Y$ for which $\vec{a} \in M^n_X$ enumerates the fiber $M_x$ in which it lies is dense $\*\Pi^0_2$: we will show that for each open section $S \in \@U_1$, the set of $(\vec{b},\vec{a})$ such that if $\vec{a}$ lies in a fiber over $p(S)$, then $\vec{a}$ includes a point in $S$ (which is a $\*\Pi^0_2$ set), is dense, which suffices because each point in $M$ is in some open section.
For a nonempty basic open $\Up (F_{\vec{b}} \cap U) \subseteq Y$ where $U \in \@U_n$, pick any $\vec{a} \in F_{\vec{b}} \cap U$, say lying in the fiber $M_x$.
If $x \in p(S)$, then $U \times_X S \subseteq M^{n+1}_X$ is an open set such that $F_{\vec{b}} \cap \pi_n(U \times_X S) \ne \emptyset$, since it contains $\vec{a}$.
Thus by the $\Longleftarrow$ direction of \cref{eq:estr-osat-pi02:ax-ext1}, for some $d \in N$, we have $F_{\vec{b},d} \cap (U \times_X S) \ne \emptyset$; an element $(\vec{a}',c)$ of this intersection then belongs to $\Up (F_{\vec{b}} \cap U)$ (since $\vec{a}' \in U$) and includes $c \in S$.
\qedhere

\end{itemize}
\end{proof}

\begin{definition}
If $\@M -> X$ is a second-countable étale structure, and a theory $\@T$ axiomatizes the countable structures isomorphic to some $\@M_x$, we say that \defn{$X$ parametrizes models of $\@T$ via $\@M$}.
\end{definition}

Thus, \cref{thm:estr-osat-pi02} says that if $X$ is quasi-Polish and $\@M$ has $\Sigma_1$ saturations, then $X$ parametrizes models of a $\Pi_2$ theory.
Conversely, every $\Pi_2$ theory has a quasi-Polish parametrization with $\Sigma_1$ saturations; see \cref{ex:estr-per}.

\begin{lemma}
\label{thm:estr-osat-pb}
If $p : \@M -> X$ is a second-countable étale structure with $\Sigma_1$ saturations, and $f : Z -> X$ is a continuous open map, then the pullback structure $f^*(\@M) -> Z$ (\cref{rmk:estr-pb}) also has $\Sigma_1$ saturations.
\end{lemma}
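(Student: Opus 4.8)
The plan is to compute the $\Iso_Z(f^*(\@M))$-saturation of a basic open set directly and reduce it to an $\Iso_X(\@M)$-saturation, which is $\Sigma_1$ by hypothesis. First I would set up the fibered picture. Using the canonical bijections $f^*(M)_z \cong M_{f(z)}$ of \cref{rmk:estr-pb}, the $n$-fold fiber power is identified with $(f^*(M))^n_Z \cong Z \times_X M^n_X = \set{(z,\vec{a})}{f(z) = p(\vec{a})}$, and a basis for its topology consists of the sets $W \times_X U := \set{(z,\vec{a})}{z \in W,\ \vec{a} \in U}$ for $W \subseteq Z$ and $U \subseteq M^n_X$ (basic) open. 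Under the canonical identifications, each $(z,w,g) \in \Iso_Z(f^*(\@M))$ corresponds to an isomorphism $g : M_{f(z)} \cong M_{f(w)}$, and its action on $(f^*(M))^n_Z$ sends $(z,\vec{a}) \mapsto (w, g(\vec{a}))$, where $g(\vec{a})$ is computed via the $\Iso_X(\@M)$-action on $M^n_X$.

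Second, I would carry out the key computation. Unwinding the action (\cref{def:isogpd}), a point $(w,\vec{b}) \in Z \times_X M^n_X$ lies in $\Iso_Z(f^*(\@M)) \cdot (W \times_X U)$ iff there exist $z \in W$, a tuple $\vec{a} \in U$ with $p(\vec{a}) = f(z)$, and an isomorphism $g : M_{f(z)} \cong M_{f(w)}$ with $g(\vec{a}) = \vec{b}$. The crucial observation is that $z$ enters this condition only through the value $f(z)$, so the requirement ``there is $z \in W$ with $p(\vec{a}) = f(z)$'' collapses to the single condition $p(\vec{a}) \in f(W)$. Hence the saturation is exactly the set of $(w,\vec{b})$ for which $\vec{b} \in \Iso_X(\@M) \cdot U_W$, where $U_W := U \cap p^{-1}(f(W)) \subseteq M^n_X$.

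Third, I would conclude. This is the one and only point at which openness of $f$ is used: since $f$ is open, $f(W) \subseteq X$ is open, whence $U_W$ is open in $M^n_X$. By the $\Sigma_1$-saturations hypothesis (\cref{def:estr-osat})—extended from basic to arbitrary open sets by writing $U_W$ as a countable union of basic opens (second-countability) and taking the countable $\bigvee$ of the corresponding $\Sigma_1$ formulas—we get $\Iso_X(\@M) \cdot U_W = \psi^\@M$ for some $\Sigma_1$ formula $\psi(x_0,\dotsc,x_{n-1})$. Then by the pullback identity $\psi^{f^*(\@M)} = f^*(\psi^\@M)$ of \cref{rmk:estr-pb}, the set of $(w,\vec{b})$ with $\vec{b} \in \psi^\@M$ is precisely $\psi^{f^*(\@M)}$, so $\Iso_Z(f^*(\@M)) \cdot (W \times_X U) = \psi^{f^*(\@M)}$ is $\Sigma_1$-definable. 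Since the sets $W \times_X U$ form a basis, this shows $f^*(\@M)$ has $\Sigma_1$ saturations.

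I expect the main obstacle to be the bookkeeping in the second step: correctly transporting the canonical identifications $f^*(M)_z \cong M_{f(z)}$ through the groupoid action so as to recognize that the $z$-coordinate is ``free,'' which is exactly what allows the $\Iso_Z$-saturation to collapse onto an $\Iso_X$-saturation in the $\vec{b}$-variable alone. Once this is seen, identifying openness of $f$ as precisely the hypothesis needed to keep $U_W$ open is the only substantive use of the assumption, and the remaining steps are formal.
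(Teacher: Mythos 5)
Your proposal is correct and is essentially the paper's argument: the paper states the identity $\Iso_Z(f^*(\@M)) \cdot U = \pi_2^{-1}(\Iso_X(\@M) \cdot \pi_2(U))$ for the pullback projection $\pi_2 : f^*(M) \to M$ (which is open because it is a pullback of the open map $f$), and your computation on basic opens $W \times_X U$ is exactly this identity unwound, with $\pi_2(W \times_X U) = U \cap p^{-1}(f(W))$ playing the role of your $U_W$. The extra care you take in extending $\Sigma_1$ saturations from basic to arbitrary open sets via countable disjunctions is a correct and implicitly needed detail.
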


Via \cref{rmk:estr-osat-cts}, this says that ``the composite of open maps $\@M \circ f$ is still open''.

\begin{proof}
Let $\pi_2 : f^*(M) -> M$ be the pullback projection (see the diagram in \cref{def:pb}), which is open because it is a pullback of $f$.
It is easily seen that for $U \subseteq f^*(M)^n_Z$, we have
\begin{equation*}
\Iso_Z(f^*(\@M)) \cdot U = \pi_2^{-1}(\Iso_X(\@M) \cdot \pi_2(U)).
\end{equation*}
Thus if $\phi$ defines $\Iso_X(\@M) \cdot \pi_2(U) \subseteq M^n_X$, then $\phi$ also defines $\Iso_Z(f^*(\@M)) \cdot U$.
\end{proof}

\begin{remark}
\label{thm:estr-osat-res}
If $p : \@M -> X$ is a second-countable étale structure with $\Sigma_1$ saturations, and $Z = \{x \in X \mid \@M_x |= \@T\}$ for a theory $\@T$, then the restriction $\@M|Z$, i.e., pullback of $\@M$ along the inclusion $Z `-> X$, clearly also has $\Sigma_1$ saturations, as witnessed by the same $\Sigma_1$ formulas.

(This is not an instance of the preceding lemma, since $Z \subseteq X$ need not be open.)
\end{remark}

Results such as \ref{thm:estr-osat-pb}, \ref{thm:estr-osat-res} clearly work just as well if ``$\Sigma_1$ saturations'' is replaced with ``$\Sigma_\alpha$ saturations'' throughout, for some $\alpha < \omega_1$.
The following calculation provides a rigorous way of systematically making such generalizations.

\begin{lemma}
\label{thm:estr-osat-morley}
Let $p : \@M -> X$ be a second-countable étale structure, and $\@F$ be a countable fragment of $\@L_{\omega_1\omega}$.
Suppose that saturations of open sets in $\@M$ are definable by formulas which are $\Sigma_1$ over $\@F$.
Then the Morleyized étale structure $p' : \@M' -> X'$ (\cref{def:estr-morley}) has $\Sigma_1$ saturations.
\end{lemma}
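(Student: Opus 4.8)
The plan is to show that for the Morleyized structure $p' : \@M' -> X'$, the saturation $\Iso_{X'}(\@M') \cdot U'$ of each basic open $U' \subseteq M'^n_{X'}$ is $\Sigma_1$-definable (in the Morleyized language $\@L'$). The natural first step is to understand what the isomorphism groupoid $\Iso_{X'}(\@M')$ is, relative to the original $\Iso_X(\@M)$. Since $\@M'$ is obtained from $\@M$ purely by expanding by definitions (adding relation symbols $R_\phi$ interpreted as $\phi^\@M$) and refining the topologies, an isomorphism of fibers $\@M'_x \cong \@M'_y$ is exactly the same underlying map as an isomorphism $\@M_x \cong \@M_y$: any map preserving all the original symbols automatically preserves each $R_\phi$, since $R_\phi$ is defined from them. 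So as \emph{sets}, $\Iso_{X'}(\@M') = \Iso_X(\@M)$ (over the same underlying point-pairs of $X' = X$). The first thing I would carefully verify is that this identification is also a homeomorphism, or at least that it suffices for the saturation computation; the topology on $X'$ is finer, but the group(oid) elements are unchanged.

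**Reducing to the original structure's saturations.** Recall from \cref{def:estr-morley} that a basic open $U' \subseteq M'^n_{X'}$ has the form $U \cap p^{-1}(p(V \cap \phi^\@M))$ for original basic opens $U \subseteq M^n_X$, $V \subseteq M^m_X$, and $\phi$ a finite conjunction of formulas in $\@F$. I would first handle the generating pieces separately and then use that saturation commutes with unions (as the operator $g \cdot (-)$ and hence $\Iso \cdot (-)$ preserves unions). The key observation is that the two types of generator behave differently: a set of the form $U \subseteq M^n_X$ is an \emph{original} open set, whose $\@M$-saturation is $\Sigma_1$-over-$\@F$ by hypothesis, hence $\Sigma_1$ in $\@L'$ after Morleyization; while a set of the form $p^{-1}(p(V \cap \phi^\@M))$ is \emph{isomorphism-invariant already} (it is pulled back from the base along $p$, and $p$ is the object-map of the groupoid, so such ``fiberwise-constant'' conditions are automatically saturated). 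I would show that $p^{-1}(p(V \cap \phi^\@M))$ is defined by the $\Sigma_1$-over-$\@F$ sentence-in-context $\exists \vec{y}\, \phi(\vec{y})$ composed with the saturation of $V$ — more precisely, its saturation is $\exists \vec{y}\,(\phi_{V}^\@M(\vec{y}) \wedge \phi(\vec{y}))$ where $\phi_V$ defines $\Iso_X(\@M)\cdot V$, which is again $\Sigma_1$ over $\@F$ and hence $\Sigma_1$ in $\@L'$.

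**Combining the pieces.** Because $\Iso_{X'}(\@M') = \Iso_X(\@M)$ acts the same way on tuples, and the action preserves the original topology's opens and fixes the pulled-back base sets, I expect
\[
\Iso_{X'}(\@M') \cdot \paren[\big]{U \cap p^{-1}(p(V \cap \phi^\@M))}
= \paren[\big]{\Iso_X(\@M) \cdot U} \cap p^{-1}\paren[\big]{p(\Iso_X(\@M)\cdot(V\cap\phi^\@M))},
\]
the point being that the base-pulled-back factor is already saturated so it passes through the saturation unchanged, while the first factor gets saturated as in $\@M$. Each factor is then $\Sigma_1$-over-$\@F$, hence defined by a $\Sigma_1$ $\@L'$-formula modulo $\@T'$ by the bijection in \cref{def:morley}, and a finite conjunction of $\Sigma_1$ formulas is $\Sigma_1$; a union over the basis then remains $\Sigma_1$.

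**Anticipated main obstacle.** The step I expect to require the most care is verifying that saturation genuinely \emph{factors} through the two generators as above — in particular, justifying that the pulled-back base factor $p^{-1}(p(V \cap \phi^\@M))$ is $\Iso_X(\@M)$-invariant and that intersecting with an invariant set commutes with applying the saturation operator. Saturation does \emph{not} commute with arbitrary intersections, so the argument must genuinely use invariance of one factor: for an invariant set $W$ one has $\Iso\cdot(U\cap W) = (\Iso\cdot U)\cap W$. I would want to state this as a small lemma and check that $p^{-1}(p(V\cap\phi^\@M))$ is invariant because any $g:\@M_x\cong\@M_y$ carries the condition ``$x$ lies in $p(V\cap\phi^\@M)$'' to the identical condition on $y$ (isomorphic fibers satisfy the same $\exists\vec{y}(\phi\wedge\text{``}\vec{y}\in V\text{''})$ modulo saturating $V$). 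Getting this invariance and the resulting factorization precise — and confirming it yields genuinely $\Sigma_1$-over-$\@F$ data rather than something of higher complexity — is the crux; everything else is bookkeeping via \cref{def:morley} and \cref{def:estr-morley}.
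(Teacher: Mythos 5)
There is a genuine gap: the displayed identity at the heart of your argument,
\[
\Iso_{X'}(\@M') \cdot \bigl(U \cap p^{-1}(p(V \cap \phi^\@M))\bigr)
= \bigl(\Iso_X(\@M) \cdot U\bigr) \cap p^{-1}\bigl(p(\Iso_X(\@M)\cdot(V\cap\phi^\@M))\bigr),
\]
is false in general, and the invariance claim you base it on is wrong. The set $p^{-1}(p(V\cap\phi^\@M))$ is \emph{not} $\Iso_X(\@M)$-invariant: being pulled back along $p$ does not help, because the groupoid moves base points, and a subset of $X$ is invariant only if it is saturated under the orbit equivalence relation $\@M_x\cong\@M_y$ on $X$; the set $p(V\cap\phi^\@M)=\{x : V_x\cap\phi^{\@M_x}\ne\emptyset\}$ is not, since $V$ is an arbitrary open set rather than an invariant one. (The set $\phi^\@M$ alone \emph{is} invariant; $V\cap\phi^\@M$ is not.) Consequently your right-hand side, while indeed invariant and containing the true saturation, is in general strictly larger. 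Concretely: let $X=\{x,z\}$ be discrete with $\@M_x\cong\@M_z$, let $U$ be an open set meeting only the fiber over $x$, and let $V\cap\phi^\@M$ meet only the fiber over $z$. Then $U\cap p^{-1}(p(V\cap\phi^\@M))=\emptyset$, so the left side is empty, while the right side contains the whole orbit of any tuple in $U$. The underlying issue is that a tuple belongs to the saturation of $U\cap p^{-1}(p(V\cap\phi^\@M))$ only if a \emph{single} isomorphism $g:\@M_x\cong\@M_y$ simultaneously pulls the tuple back into $U_x$ \emph{and} has a $\phi$-witness in $V_x$ in the \emph{same} source fiber; saturating $U$ and $V$ independently allows two different source fibers and two different isomorphisms.

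The paper's proof avoids this by never splitting the two factors: it rewrites the basic open as a projection $U \cap p^{-1}(p(V \cap \phi^\@M)) = \pi_1\bigl((U\times_X V)\cap \phi(x_n,\dotsc,x_{n+m-1})^\@M\bigr)$, observes that only $\phi(x_n,\dotsc,x_{n+m-1})^\@M = M^n_X\times_X\phi^\@M$ is invariant, and then saturates the single open set $U\times_X V\subseteq M^{n+m}_X$ \emph{jointly} using the hypothesis (which applies to open subsets of every fiber power). The invariant-intersection lemma you propose ($\Iso\cdot(A\cap W)=(\Iso\cdot A)\cap W$ for invariant $W$) and the final projection-as-$\exists$ step are both correct and are exactly what the paper uses — but with $W=\phi(x_n,\dotsc,x_{n+m-1})^\@M$ and $A=U\times_X V$, yielding the formula $\exists \vec y\,(\psi(\vec x,\vec y)\wedge\phi(\vec y))$ with $\psi$ defining $\Iso_X(\@M)\cdot(U\times_X V)$. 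Note that $\Iso_X(\@M)\cdot(U\times_X V)$ is in general strictly smaller than $(\Iso_X(\@M)\cdot U)\times_X(\Iso_X(\@M)\cdot V)$, which is precisely the information your decomposition discards. Your observation that $\Iso_{X'}(\@M')$ and $\Iso_X(\@M)$ have the same underlying morphisms (Morleyization being an expansion by definitions) is correct and is implicitly used by the paper.
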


\begin{proof}
From \cref{def:estr-morley}, $M'^n_{X'}$ has an open basis of sets of the form
\begin{align*}
U \cap p^{-1}(p(V \cap \phi^\@M))
&= \pi_1(U \times_X (V \cap \phi^\@M)) \\
&= \pi_1((U \times_X V) \cap \phi(x_n, \dotsc, x_{n+m-1})^\@M)
\end{align*}
where $U \subseteq M^n$ is open, $V \subseteq M^m$ is open for some other $m \in \#N$, $\phi$ is a finite conjunction of $m$-ary formulas in $\@F$, $\pi_1 : M^n \times_X M^m -> M^n$ is the first coordinate projection, and $\phi(x_n, \dotsc, x_{n+m-1})^\@M = M^n_X \times_X \phi^\@M$;
since the last is isomorphism-invariant, we get
\begin{align*}
\Iso_{X'}(\@M') \cdot (U \cap p^{-1}(p(V \cap \phi^\@M)))
&= \pi_1\paren[\big]{(\Iso_X(\@M) \cdot (U \times_X V)) \cap \phi(x_n, \dotsc, x_{n+m-1})^\@M} \\
&= \paren[\big]{\exists x_n, \dotsc, x_{n+m-1}\, \psi(x_0, \dotsc, x_{n+m-1}) \wedge \phi(x_n, \dotsc, x_{n+m-1})}^\@M
\end{align*}
where $\psi$ is a formula defining $\Iso_X(\@M) \cdot (U \times_X V) \subseteq M^{n+m}_X$ which is $\Sigma_1$ over $\@F$, hence may be replaced in $\@M'$ by an equivalent $\Sigma_1$ formula in the Morleyized language $\@L'$.
\end{proof}

\begin{corollary}
\label{thm:estr-osat-pi0a}
If $p : \@M -> X$ is a second-countable étale structure over quasi-Polish $X$ with $\Sigma_\alpha$ saturations, then the class of structures isomorphic to one of its fibers is $\Pi_{\alpha+1}$-axiomatizable.
\end{corollary}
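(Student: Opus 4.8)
The plan is to reduce this to the already-established $\Sigma_1$ case, \cref{thm:estr-osat-pi02}, by Morleyizing over a fragment large enough to convert the $\Sigma_\alpha$ saturation-defining formulas into atomic ones; this is exactly the reduction that \cref{thm:estr-osat-morley} is built to perform.

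Concretely, I would first fix, for each $n$ and each basic open $U \subseteq M^n_X$, a $\Sigma_\alpha$ formula $\phi_U$ with $\Iso_X(\@M) \cdot U = \phi_U^\@M$ (this is what ``$\Sigma_\alpha$ saturations'' supplies). Since $M$ is second-countable there are only countably many such $U$, so letting $\@F$ be the fragment generated by all atomic formulas together with all the $\phi_U$ (their closure under subformulas and variable substitutions) yields a countable fragment as in \cref{def:morley}. The crucial observation is that every formula of $\@F$ is itself $\Sigma_\alpha$: this holds for atomic formulas, and every subformula of a $\Sigma_\alpha$ formula is again $\Sigma_\alpha$, since the building blocks $\Sigma_\beta, \Pi_\beta$ for $\beta < \alpha$ all embed into $\Sigma_\alpha$ while the intermediate builds stay $\Sigma_\alpha$ because the class is closed under $\wedge, \top, \exists, \bigvee$ by \cref{def:sigmapi}; and variable substitution preserves complexity.

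With $\@F$ in hand, each saturation is defined by the single $\@F$-formula $\phi_U$, which is trivially ``$\Sigma_1$ over $\@F$'', so \cref{thm:estr-osat-morley} applies: the Morleyization $p' : \@M' -> X'$ of \cref{def:estr-morley} has $\Sigma_1$ saturations, and $X'$ is quasi-Polish by the lemma immediately following \cref{def:estr-morley}. Hence \cref{thm:estr-osat-pi02} yields a countable $\Pi_2$ $\@L'$-theory $\@T_0'$ axiomatizing the countable $\@L'$-structures isomorphic to a fiber of $\@M'$.

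Finally, I would translate $\@T_0'$ back to the original language. The fibers of $\@M'$ are exactly the Morleyizations of the fibers of $\@M$, and every model of $\@T_0'$ is such a fiber, hence a model of the expansion-by-definitions theory $\@T'$ of \cref{def:morley}; thus an $\@L$-structure is isomorphic to a fiber of $\@M$ iff its unique $\@T'$-expansion models $\@T_0'$. Replacing each $\Pi_2$ $\@L'$-sentence in $\@T_0'$ by the $\@L$-sentence corresponding to it under the bijection of \cref{def:morley} (the ``$\Pi_2$ over $\@F$'' sentence equivalent to it mod $\@T'$) produces a countable $\@L$-theory $\@T_0$ axiomatizing precisely the fibers of $\@M$. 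I expect the only place real care is needed to be this last complexity count: a ``$\Pi_2$ over $\@F$'' sentence has the form $\bigwedge_i \forall \vec{y}_i\, (\phi_i -> \psi_i)$ with $\phi_i, \psi_i$ built from $\@F \subseteq \Sigma_\alpha$ using $\wedge, \top, \exists, \bigvee$, hence $\phi_i, \psi_i \in \Sigma_\alpha$; by \cref{def:sigmapi} this is exactly a $\Pi_{\alpha+1}$ formula, so $\@T_0$ is $\Pi_{\alpha+1}$ as claimed (and the case $\alpha = 1$ recovers \cref{thm:estr-osat-pi02}).
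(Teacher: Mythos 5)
Your proposal is correct and takes essentially the same route as the paper's proof of \cref{thm:estr-osat-pi0a}: Morleyize over the countable fragment $\@F$ generated by the $\Sigma_\alpha$ formulas defining the saturations, apply \cref{thm:estr-osat-morley} followed by \cref{thm:estr-osat-pi02}, and translate the resulting $\Pi_2$ theory in the expanded language back to a $\Pi_{\alpha+1}$ theory in the original language. The paper's three-sentence proof leaves implicit the bookkeeping you spell out (that $\@F \subseteq \Sigma_\alpha$ and that ``$\Pi_2$ over $\@F$'' sentences are $\Pi_{\alpha+1}$), and your details are right.
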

This is the $\Sigma_\alpha$ version of \cref{thm:estr-osat-pi02}, and generalizes (U1)$\implies$(U2) of Montalbán's characterization of Scott rank \cite{Mscott} (see \cref{thm:scott} below for other parts of the characterization).
\begin{proof}
Let $\@F$ be the countable fragment generated by $\Sigma_\alpha$ formulas defining the saturations of basic open sets in $\@M$.
By \cref{thm:estr-osat-morley}, the Morleyized $p' : \@M' -> X'$ has $\Sigma_1$ saturations.
Thus by \cref{thm:estr-osat-pi02}, its fibers are $\Pi_2$-axiomatizable in the Morleyized language $\@L'$, hence $\Pi_{\alpha+1}$-axiomatizable in the original language $\@L$.
\end{proof}

\begin{lemma}
\label{thm:estr-osat-isogpd}
If $p : \@M -> X$ is a second-countable étale structure with $\Sigma_1$ saturations, then $\Iso_X(\@M)$ is an \defn{open topological groupoid}, i.e., $\dom, \cod : \Iso_X(\@M) \rightrightarrows X$ are open.
\end{lemma}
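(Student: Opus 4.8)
The plan is to reduce everything to a single openness computation. By symmetry it suffices to prove $\dom$ is open: the inversion map $\iota : \Iso_X(\@M) -> \Iso_X(\@M)$, $(x,y,g) |-> (y,x,g^{-1})$, is continuous and equal to its own inverse, hence a homeomorphism, and $\cod = \dom \circ \iota$, so $\cod$ is open once $\dom$ is. Moreover, images under $\dom$ preserve unions, and the sets $\brbr{U |-> V}$ for open $U, V \subseteq M^n_X$ form an open basis for $\Iso_X(\@M)$ (\cref{def:isogpd}); since $\brbr{U |-> V}$ preserves unions in each of $U, V$, I may further restrict to $U, V$ ranging over a countable basis, so that the $\Sigma_1$ saturation hypothesis applies directly to $V$. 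Thus it is enough to show that $\dom(\brbr{U |-> V})$ is open for each such basic open set.

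The heart of the argument is the identity
\[
\dom(\brbr{U |-> V}) = p\paren[\big]{U \cap (\Iso_X(\@M) \cdot V)}.
\]
To establish it, unwind the definitions: a point $x$ lies in the left-hand side iff there are $y \in X$, an isomorphism $g : \@M_x \cong \@M_y$, and a tuple $\vec a \in U_x$ with $g\vec a \in V_y$. For a fixed $\vec a$ in the fiber of $M^n_X$ over $x$, the existence of such a pair $(y,g)$ with $g\vec a \in V_y$ says exactly---upon applying $g^{-1}$---that $\vec a$ is the image, under some morphism of $\Iso_X(\@M)$, of a tuple lying in $V$; that is, $\vec a \in (\Iso_X(\@M) \cdot V)_x$. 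Quantifying over $\vec a \in U_x$ then expresses precisely that $x \in p(U \cap (\Iso_X(\@M) \cdot V))$.

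Granting the identity, openness is immediate: by the $\Sigma_1$ saturation hypothesis, $\Iso_X(\@M) \cdot V = \psi^\@M$ for some $\Sigma_1$ formula $\psi$, which is $\*\Sigma^0_1$, i.e.\ open, in $M^n_X$ (\cref{def:estr}); hence $U \cap (\Iso_X(\@M) \cdot V)$ is open, and its image under $p$ is open because $p : M^n_X -> X$, being the projection of an étale space, is an open map. I expect the only real obstacle to be the displayed identity itself---recognizing that the existential quantifier over the pair $(y,g)$, for a fixed source tuple $\vec a$, collapses exactly to membership in the saturation $\Iso_X(\@M) \cdot V$. This is the concrete form of the slogan (\cref{rmk:estr-osat-cts}) that $\Sigma_1$ saturations make $\@M$ ``open onto its image'', with $\Iso_X(\@M) \cdot V$ playing the role of a $\ker$-saturation and $\dom$ that of the quotient map; the remaining reductions and the openness of $p$ are purely formal.
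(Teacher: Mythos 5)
Your proof is correct and is essentially the paper's own argument: the paper's one-line proof rests on the mirror identity $\cod(\brbr{U \mapsto V}) = p((\Iso_X(\@M) \cdot U) \cap V)$ (with ``similarly for $\dom$''), which is exactly your displayed identity with the roles of $\dom$ and $\cod$ exchanged. Your reduction of $\cod$ to $\dom$ via the inversion homeomorphism, and the restriction to basic $\brbr{U \mapsto V}$, are just the details the paper leaves implicit.
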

\begin{proof}
For open $U, V \subseteq M^n_X$, we have
$\cod(\brbr{U |-> V}) = p((\Iso_X(\@M) \cdot U) \cap V)$;
similarly for $\dom$.
\end{proof}

\begin{remark}
\label{rmk:estr-osat-isogpd2}
More generally, if $p : \@M -> X$ and $q : \@M -> Y$ are two étale structures, and $\@M$ has $\Sigma_1$ saturations, then $\cod : \Iso_{X,Y}(\@M,\@N) -> Y$ (recall \cref{rmk:isogpd2}) is open.
\end{remark}

\section{Examples of parametrizations}
\label{sec:param}

We begin by recasting the standard zero-dimensional Polish parametrization of countably infinite models in terms of an étale structure; see \cite[16.5]{Kcdst}, \cite[\S3.6]{Gidst}.

\begin{example}[space of structures on $\#N$]
\label{ex:estr-infdec}
Let $\@L$ be a countable language,
\begin{align*}
X := \prod_{\text{$n$-ary fn } f \in \@L} \#N^{\#N^n} \times \prod_{\text{$n$-ary rel } R \in \@L} 2^{\#N^n}
\end{align*}
be the zero-dimensional Polish space of $\@L$-structures on $\#N$, with the product topology, and
\begin{align*}
M := X \times \#N
\end{align*}
equipped with the first projection $p : M -> X$; this is clearly étale.
On each fiber $M_x = \{x\} \times \#N \cong \#N$ of $M$, we may put the structure $\@M_x := (M_x,x)$; that is,
\begin{align*}
P^\@M(x, a_0, \dotsc, a_{n-1}) &:= x(P)(a_0, \dotsc, a_{n-1})
\end{align*}
for each symbol (function or relation) $P \in \@L$, using here the ``based'' representation \cref{eq:pb-based} of $M^n_X$.
This defines an étale $\@L$-structure $\@M$ over $X$.
The isomorphism groupoid is
\begin{align*}
\Iso_X(\@M) = \set{(x,y,g) \in X \times X \times S_\infty}{g \cdot x = y}
\end{align*}
where $S_\infty$ is the infinite symmetric group of all bijections $\#N \cong \#N$, acting on $X$ via pushforward of structure (the \defn{logic action}).
Up to topological groupoid isomorphism, we may forget about the second coordinate $y$ above, yielding $\Iso_X(\@M) \cong X \times S_\infty$ (the action groupoid of the logic action).

This $\@M$ does not have $\Sigma_1$ saturations, however, but only $\Sigma_2$ saturations.
The complement of the diagonal in $M^2_X$ is open and $\Iso_X(\@M)$-invariant, but not $\Sigma_1$-definable, since $\ne$ is not $\Sigma_1$; similarly for negations of any relation symbols in $\@L$.
In fact, this is the only issue: by definition of the product topology on $X$, a basic open set in $M^n_X \cong X \times \#N^n$ is of the form
\begin{align*}
\set{(x, a_0, \dotsc, a_{n-1})}{\phi^x(a_n, \dotsc, a_{n+m-1})}
\end{align*}
for some fixed $\vec{a} \in \#N^{n+m}$ and finite conjunction $\phi(y_n, \dotsc, y_{n+m-1})$ of atomic $\@L$-formulas and their negations; the $\Iso_X(\@M)$-saturation of this set is easily seen to be defined by the $\Sigma_2$ formula
\begin{equation*}
\begin{aligned}
\psi(y_0,\dotsc,y_{n-1}) := \exists y_n, \dotsc, y_{n+m-1}\, \paren[\big]{\phi(y_n, \dotsc, y_{n+m-1}) \wedge \bigwedge_{a_i = a_j} (y_i = y_j) \wedge \bigwedge_{a_i \ne a_j} (y_i \ne y_j)}.
\end{aligned}
\end{equation*}
(This is the base case of the proof of the classical Lopez-Escobar theorem; see \cite[16.9]{Kcdst}.)

We may thus Morleyize all negated atomic formulas, as in \cref{ex:morley-neg}, to obtain an expanded language $\@L' \supseteq \@L$ and a $\Pi_2$ $\@L'$-theory $\@T'$.
Let $\@M'$ be the unique expansion of $\@M$ to an étale model of $\@T'$, by interpreting the newly added relation symbols as the complements of the atomic formulas interpreted in $\@M$ (which are clopen).
Then $\@M'$ has $\Sigma_1$ saturations, namely given by the Morleyized formulas $\psi'$ corresponding to the above $\psi$ (as in \cref{def:morley}).
A $\Pi_2$ axiomatization of the fibers of $\@M'$ is given by $\@T'$ together with the ``infinite models'' axioms for each $n \in \#N$
\begin{align}
\label{eq:estr-infdec:ax-inf}
\exists x_0, \dotsc, x_{n-1} \bigwedge_{i \ne j} (x_i \ne x_j).
\end{align}
If we started with a $\Pi_2$ $\@L$-theory $\@T$ which already has only infinite models, then we may restrict this $\@M'$ to the models of $\@T$ (as in \cref{thm:estr-osat-res}) to obtain a parametrization of the models of the Morleyized $\@T$; if furthermore $\@T$ already implied that every negated atomic formula is equivalent to a $\Sigma_1$ formula, then there is no need to Morleyize.
\end{example}

The ``infinite models'' restriction in the above example may be lifted, by enlarging the parametrizing space $X$ to include finite models.
There are several ways to do so, however, that affect the saturation complexity of the resulting $\@M$, i.e., the amount of Morleyization needed to obtain $\Sigma_1$ saturations.
The simplest approach is the following:

\begin{example}[space of structures on $\le \#N$ with $\Delta_1$ sizes]
\label{ex:estr-findec1}
For each $N \le \#N$ (i.e., $N \in \#N \cup \{\#N\}$), define an étale structure $\@M_N -> X_N$ over the space $X_N$ of $\@L$-structures on $N$, exactly as above.
We may then simply take the disjoint unions $X := \bigsqcup_{N \le \#N} X_N$ and $\@M := \bigsqcup_{N \le \#N} \@M_N$.
The groupoid $\Iso_X(\@M)$ consists of the disjoint union of the logic actions of each symmetric group $S_N$ on $X_N$.

However, each $X_N \subseteq X = M^0_X$ is clopen invariant, while ``the model has size $N$'' is $\Sigma_3$-definable for finite $N$ (by a conjunction of a sentence \cref{eq:estr-infdec:ax-inf} and a negation of such a sentence) and $\Pi_3$-definable for infinite $N$ (by the infinite conjunction of all \cref{eq:estr-infdec:ax-inf}); thus this $\@M$ only has $\Sigma_4$ saturations.
Even if we first Morleyize $\ne$, i.e., use the traditional $\Sigma_1$ that includes $\ne$, we still only get $\Sigma_3$ saturations.
(It is easy to see that Morleyizing the above sentences \cref{eq:estr-infdec:ax-inf}, their subformulas, their negations, and their conjunction, thereby rendering each $X_N \subseteq X$ $\Sigma_1$-definable, is sufficient to yield $\Sigma_1$ saturations, since each $\@M_N$ individually has $\Sigma_1$ saturations by the same argument as in the preceding example.)
\end{example}

The worst feature of this approach is that infinite models are clopen in $X$ but only $\Pi_3$-definable.
In order to reduce this gap, we need a coarser topology on the disjoint union $X = \bigsqcup_N X_N$ that allows finite models to converge to infinite ones.
In the next few examples, we assume for simplicity that $\@L$ is relational; there is little loss of generality in doing so, since a $\Pi_2$ theory may be used to impose that a relation is the graph of a function.

\begin{example}[space of structures on $\le \#N$ with $\Delta_1$ finite sizes]
\label{ex:estr-findec2}
Let $\-{\#N} := \{0, 1, 2, \dotsc, \#N\}$, with the topology of the one-point compactification of $\#N$, i.e., $0, 1, 2, \dotsc$ are isolated points converging to $\#N$.
We may realize the disjoint union $X$ in the preceding example as
\begin{align*}
X :={}& \{(N, x) \mid N \in \-{\#N} \AND x \in X_N\} \quad \text{where $X_N$ is as above} \\
={}& \set[\big]{(N, x(R))_{R \in \@L} \in \-{\#N} \times \prod_{R \in \@L} 2^{\#N^n}}{\forall \text{ $n$-ary } R \in \@L\, (x(R) \subseteq N^n \subseteq \#N^n)},
\end{align*}
regarded now as a closed subspace of the compact zero-dimensional space $\-{\#N} \times \prod_{R \in \@L} 2^{\#N^n}$, and
\begin{gather*}
M := \{(N, x, a) \in X \times \#N \mid a \in N\} \subseteq \-{\#N} \times \prod_{R \in \@L} 2^{\#N^n} \times \#N, \\
R^\@M(N, x, a_0, \dotsc, a_{n-1}) := x(R)(a_0, \dotsc, a_{n-1}) \quad \text{for $R \in \@L$}.
\end{gather*}
An open set in $\-{\#N}$ is some finite Boolean combination of the sets $\{n, n+1, \dotsc, \#N\}$ which are axiomatized by the sentences \cref{eq:estr-infdec:ax-inf}.
Thus, a basic open set in $M^n_X$ is
\begin{align}
\label{eq:estr-findec2:basic}
\set[\big]{(N, x, a_0, \dotsc, a_{n-1})}{(N,x) \in X \AND a_0, \dotsc, a_{n-1} \in N \AND \phi^{(N,x)}(a_n, \dotsc, a_{n+m-1}) \wedge \psi^N}
\end{align}
for some $\vec{a} \in \#N^{n+m}$, finite conjunction $\phi(y_n, \dotsc, y_{n+m-1})$ of atomic $\@L$-formulas and their negations, and finite conjunction $\psi$ of the sentences \cref{eq:estr-infdec:ax-inf} and their negations.
Note that in addition to the condition ``$a_0, \dotsc, a_{n-1} \in N$'', for each non-negated atomic formula in $\phi$, each of its arguments among $a_n, \dotsc, a_{n+m-1}$ is also implicitly required to be in $N$.
(Once we remember these implicit conditions, we may assume $\phi$ contains no equalities, which have constant truth value given $\vec{a}$.)
All of these conditions of the form ``$a_i \in N$'' together mean $N \ge \max_i (a_i+1)$, which can itself be expressed by a single sentence $\theta$ of the form \cref{eq:estr-infdec:ax-inf}.
Thus the saturation of the above set is defined by
\begin{align}
\label{eq:estr-findec2:ax}
\exists y_n, \dotsc, y_{n+m-1}\, \paren[\big]{\phi(y_n, \dotsc, y_{n+m-1}) \wedge \bigwedge_{a_i = a_j} (y_i = y_j) \wedge \bigwedge_{a_i \ne a_j} (y_i \ne y_j)} \wedge \psi \wedge \theta
\end{align}
which is $\Sigma_3$; it suffices to Morleyize negations of atomic formulas and of \cref{eq:estr-infdec:ax-inf}.
As before, if we are only interested in parametrizing models of a $\Pi_2$ theory modulo which negations of atomic formulas and ``the model has size $\le n$'' are already $\Sigma_1$-definable, then there is no need to Morleyize anything.
\end{example}

We may further reduce the need for Morleyization, provided we are willing to coarsen the topology to be quasi-Polish:

\begin{example}[space of structures on $\le \#N$ with $\Sigma_1$ lower bounds on size]
\label{ex:estr-findecs}
Modify the preceding\break example by putting the \emph{Scott} topology on $\-{\#N}$, with open sets $\{n, n+1, \dotsc, \#N\}$.
Then the $\psi$ in \cref{eq:estr-findec2:ax} above will only be a conjunction of the sentences \cref{eq:estr-infdec:ax-inf}, no longer their negations, and so the final formula defining the saturation will be $\Sigma_2$; it suffices to Morleyize negations of atomic formulas.

If we also replace $2$ with $\#S$ in the definition of $X$, then the $\phi$ in \cref{eq:estr-findec2:ax} will become basic (and equality-free); and so we only need to Morleyize $\ne$.
In other words, we obtain a parametrization of any $\Pi_2$ theory modulo which $\ne$ is $\Sigma_1$-definable.
(This parametrization was used in \cite{Cscc}; its restriction to the countably infinite models was used in \cite{CMRpos}, \cite{BFRSVle}.)
\end{example}

\begin{remark}
\label{rmk:estr-findecs-0d}
In fact, the quasi-Polishness is not essential here: by \cref{it:qpol-openquot}, we may find a continuous open surjection $Z ->> X$ from a zero-dimensional Polish $Z$ (e.g., $\sum : 2^\#N ->> \-{\#N}$), and then pull back $\@M -> X$ from above to an étale structure over $Z$, with underlying étale space $\subseteq Z \times \#N$ which is clearly also zero-dimensional.
Thus \emph{every $\Pi_2$ theory modulo which $\ne$ is $\Sigma_1$-definable has a zero-dimensional Polish parametrization via a zero-dimensional étale structure with $\Sigma_1$ saturations}.
\end{remark}

The final step of removing the need to Morleyize $\ne$ requires a more drastic change.
We need to render the diagonal in $M^2_X$ open but not closed, which clearly cannot be achieved by merely coarsening the topology on the base $X$ while keeping the fibers subsets of some fixed set such as $\#N$.

\begin{example}[space of partially enumerated structures]
\label{ex:estr-per}
Let $\@L$ be a countable relational language, and let $\@L' := \{{\sim}\} \sqcup \@L$ where $\sim$ is a binary relation symbol.
Construct as in \cref{ex:estr-findecs} a parametrization $\@M' -> X'$ of all $\@L'$-structures (which only has $\Sigma_2$ saturations, due to $\ne$).
Let $X \subseteq X'$ be the structures in which $\sim$ is an equivalence relation and all relations in $\@L$ are $\sim$-invariant.
Then $\sim^{\@M'}$, which is an open set in $(M')^2_{X'}$, restricted to the fibers over $X$ is an open fiberwise equivalence relation on the étale space $M'|X -> X$, i.e., only equating pairs of elements in the same fiber.
It follows that the quotient $M := (M'|X)/({\sim^{\@M'}}|X) -> X$ is also an étale space, and that the quotient map $M'|X ->> M$ is open.
Let $\@M$ be the étale structure on $M$ descended from $\@M'|X$.

Explicitly, the space $X$ consists of tuples $(N, {\sim}, x)$, consisting of $N \in \-{\#N}$ (with the Scott topology), an equivalence relation $\sim$ on $N$, and a $\sim$-invariant $\@L$-structure $x$ on $N$ (both with the Sierpinski topology), so that $x$ descends to the quotient $N/{\sim}$; this descended structure is then the fiber $\@M_{(N, {\sim}, x)}$ of $\@M -> X$.
It is clear that $\@M$ parametrizes all countable $\@L$-structures up to isomorphism, since for any countable $\@L$-structure, we may enumerate it with an initial segment $N \le \#N$ and then lift the structure to $N$.

We now check that $\@M$ has $\Sigma_1$ saturations.
A basic open set in $M^n_X$ is the image under the quotient map of one in $(M')^n_{X'}$ as in \cref{eq:estr-findec2:basic} restricted to $X$, where (in the notation of that example) $\vec{a}$ is fixed, $\phi$ is a basic equality-free formula and $\psi$ only asserts that ``the model has size $\ge \dotsb$'' as in \cref{ex:estr-findecs}, since we used the Sierpinski topology for $X'$.
We claim that the saturation of such an image in $M^n_X$ is defined by \cref{eq:estr-findec2:ax} with all occurrences of $\sim$ in $\phi$ replaced with $=$ and all $\ne$ replaced with $\top$.
Suppose $([b_0],\dotsc,[b_{n-1}]) \in (N'/{\sim'})^n = M^n_{(N',{\sim'},x')}$ satisfies this modified version of \cref{eq:estr-findec2:ax}, as witnessed by $([b_n],\dotsc,[b_{n+m-1}]) \in (N'/{\sim'})^m = M^m_{(N',{\sim'},x')}$; we must find $N, {\sim}, x$ such that $(N, {\sim}, x, a_0, \dotsc, a_{n-1})$ is in \cref{eq:estr-findec2:basic} and there is an isomorphism of the quotient structures $(N,x)/{\sim} \cong (N',x')/{\sim'}$ mapping $[a_i] |-> [b_i]$ for $i < n$.
Let $N \subseteq \#N$ be an initial segment such that
\begin{enumerate}[label=(\roman*)]
\item  if $N' = \emptyset$, then $N = \emptyset$;
\item  each $a_0, \dotsc, a_{n+m-1} \in N$ (consistent with (i) since $[b_0], \dotsc, [b_{n+m-1}] \in N'/{\sim'}$);
\item  $N$ is at least as big as required by $\psi$ (consistent with (i) since $N'/{\sim'} |= \psi$);
\item  $N \setminus \{a_0, \dotsc, a_{n+m-1}\}$ is at least as big as $(N'/{\sim'}) \setminus \{[b_0], \dotsc, [b_{n+m-1}]\}$.
\end{enumerate}
By virtue of the $\bigwedge_{a_i = a_j} (y_i = y_j)$ in \cref{eq:estr-findec2:ax}, $a_i |-> [b_i]$ is a well-defined function $\{a_0,\dotsc,a_{n+m-1}\} -> \{[b_0],\dotsc,[b_{n+m-1}]\} \subseteq N'/{\sim'}$; extend it to a surjection $h : N ->> N'/{\sim'}$, which is easily seen to be possible using the above conditions.
Now define $c \sim d  \coloniff  h(c) = h(d)$, and lift the rest of the quotient structure on $(N',x')/{\sim'}$ to the structure $x$ on $N$.
Then $h$ descends to an isomorphism $N/{\sim} \cong N'/{\sim'}$ mapping $[a_i] |-> [b_i]$, whence $(N, {\sim}, x, a_0, \dotsc, a_{n-1})$ is in \cref{eq:estr-findec2:basic}.

As usual, by further restricting this construction to the models of a $\Pi_2$ theory $\@T$, we obtain that \emph{every $\Pi_2$ theory has a quasi-Polish parametrization with $\Sigma_1$ saturations}.
As in \cref{rmk:estr-findecs-0d}, we may further pull back to a zero-dimensional Polish $Z ->> X$; however, unlike there, we can no longer assume that the underlying étale space $M$ is also zero-dimensional.
\end{example}

\begin{remark}
\label{rmk:estr-subn}
Minor variations of \cref{ex:estr-findec1,ex:estr-findec2,ex:estr-findecs,ex:estr-per} are to allow arbitrary subsets $N \subseteq \#N$, not just initial segments.
(Such a variation of \cref{ex:estr-findecs} was essentially used in \cite{AFscc}.)
\end{remark}

\begin{remark}[space of totally enumerated structures]
\label{ex:estr-er}
A further variation of \cref{ex:estr-per} is to again require $N = \#N$, as in \cref{ex:estr-infdec} (while still using the Sierpinski topology for relations including the equivalence relation $\sim$).
In the above verification that $\@M$ has $\Sigma_1$ saturations, the possibility of $N \ne \#N$ only comes into play in (i), and is clearly not needed if also $N' = \#N$.
The minor downside is that now $\@M$ only parametrizes nonempty models.
(This was the original parametrization used by Joyal--Tierney \cite[VII~\S3~Th.~1]{JTloc}.)
\end{remark}

Finally, we consider a parametrization of a somewhat different flavor, which only differs from that in \cref{ex:estr-er} for languages \emph{with} function symbols.
This parametrization or a variant thereof is widely used for studying classification of algebraic structures such as groups; see e.g., \cite{Tqiso}.

\begin{example}[space of marked structures]
\label{ex:estr-marked}
Let $\@L$ be an arbitrary countable language, possibly with function symbols.
Let $T$ be the set of $\@L$-terms over countably many variables $a_0, a_1, \dotsc$.
Let $X'$ be the space of enumerated $\@L$-structures as in \cref{ex:estr-er}, where function symbols in $\@L$ are encoded via their graphs, and where the role of the index set $\#N$ is replaced by $T$; let $\@M' -> X'$ be the corresponding étale structure.
Thus a point in $X'$ consists essentially of an equivalence relation ${\sim}$ on $T$ and an $\@L$-structure on $T/{\sim}$.
Let $X \subseteq X'$ be the $\*\Pi^0_2$ subspace where $\sim$ is a congruence with respect to the usual syntactic action of function symbols on terms, and where the quotient structure on $T/{\sim}$ interprets each function symbol via this syntactic action.
In other words,
\begin{align*}
X
&\cong \set[\big]{({\sim},x)}{x \in \prod_{\text{$n$-ary rel } R \in \@L} \#S^{T^n} \AND {\sim} \in \#S^{T^2} \text{ is a congruence on } (T,x)} \\
&\cong \{({\sim},\~x) \mid {\sim} \text{ is a congruence on } T \AND \~x \text{ interprets relation symbols on } T/{\sim}\} \\
&\cong \{\text{$\@L$-structures generated by } a_0, a_1, \dotsc\}.
\end{align*}
Let $\@M := \@M'|X$, so that $\@M_{(\sim,x)} = (T/{\sim}, \~x)$ is said generated structure.

Note that $X \subseteq X'$ is not $\Iso_{X'}(\@M')$-invariant; thus we cannot immediately conclude that $\@M$ has $\Sigma_1$ saturations by \cref{thm:estr-osat-res}.
Nonetheless, this is the case, as we now check.
Chasing through \cref{ex:estr-er}, \cref{ex:estr-per}, and ultimately \cref{eq:estr-findec2:basic}, a basic open set in $M^n_X$ is
\begin{equation}
\label{eq:estr-marked:basic}
\set[\big]{({\sim}, x, [t_0], \dotsc, [t_{n-1}])}{({\sim},x) \in X \AND \phi^x(t_n,\dotsc,t_{n+m-1})}
\end{equation}
for some fixed terms $t_0, \dotsc, t_{n+m-1} \in T$ and basic $(\@L \cup \{\sim\})$-formula $\phi$ not mentioning $=$; by absorbing function applications into $t_n, \dotsc, t_{n+m-1}$, we may assume $\phi$ only mentions relation symbols in $\@L$ and $\sim$.
Let $k \in \#N$ be sufficiently large so that $t_0, \dotsc, t_{n+m-1}$ only mention the variables $a_0, \dotsc, a_{k-1}$.
Then the $\Iso_X(\@M)$-saturation of \cref{eq:estr-marked:basic} is defined by
\begin{equation}
\label{eq:estr-marked:ax}
\exists a_0, \dotsc, a_{k-1}\, \paren[\big]{(x_0 = t_0) \wedge \dotsb \wedge (x_{n-1} = t_{n-1}) \wedge \phi'(t_n, \dotsc, t_{n+m-1})}
\end{equation}
where $\phi'$ is $\phi$ with all occurrences of $\sim$ replaced with $=$.
Indeed, if $\@N$ is a countable nonempty (which the fibers of $\@M$ all are) $\@L$-structure which satisfies this formula under an assignment $\vec{x} |-> \vec{b} \in N^n$, as witnessed by some $h : \{a_0, \dotsc, a_{k-1}\} -> N$, we may extend $h$ to the remaining variables $a_k, a_{k+1}, \dotsc$ to yield an enumeration of $N$, and then to all terms to obtain a surjective homomorphism $h : T ->> N$ with respect to the function symbols in $\@L$, and finally pull back the relations as well as $=$ in $\@N$ along $h$ to obtain $({\sim},x) \in X$ such that $h : T ->> N$ descends to an isomorphism $\@M_{({\sim},x)} \cong \@N$ and $({\sim},x,[t_0],\dotsc,[t_{n-1}])$ is in \cref{eq:estr-marked:basic} where $t_0, \dotsc, t_{n-1}$ are any $h$-preimages of $\vec{b}$.
\end{example}

\begin{remark}
For finite $N \in \#N$, one could analogously construct a space $X_N$ as above starting from only $N$ variables $a_0, \dotsc, a_{N-1}$, parametrizing $N$-generated structures.
However, saturations would only be $\Sigma_3$, since in the defining formulas \cref{eq:estr-marked:ax} (where without loss we can take $k := N$), we need to add a clause $\forall y \bigvee_{t \in T} (y = t)$ saying that $a_0, \dotsc, a_{N-1}$ are generators.
\end{remark}

\section{Omitting types and Scott rank}
\label{sec:omittype}

The classical Baire category theorem states that in sufficiently nice topological spaces, a countable intersection of dense open (or even $\*\Pi^0_2$) sets is still dense.
The omitting types theorem in model theory, and its infinitary variants, state that a countable conjunction of ``dense $\Sigma_1$ (or even $\Pi_2$) formulas'' is still ``dense''.
It is well-known folklore that these two results are closely related, and in fact omitting types can also be reduced to Baire category via some standard coding tricks; see e.g., \cite{ETomit} and the references therein.
As an application of the ``continuous open map'' perspective of \cref{rmk:estr-osat-cts}, we give here an easy and transparent version of such a reduction.

\begin{theorem}[omitting types]
\label{thm:omittype}
Let $\@T$ be a satisfiable countable $\Pi_2$ theory.
\begin{enumerate}[label=(\alph*)]
\item
Let $\phi_i(\vec{x}_i)$ be countably many $\Pi_2$ formulas of various arities $n_i \in \#N$, such that for each $\Sigma_1$ $\theta(\vec{x}_i)$ satisfiable in some model of $\@T$, $\phi_i \wedge \theta$ is also satisfiable in some model of $\@T$.
Then $\@T \cup \{\forall \vec{x}_i\, \phi_i(\vec{x}_i)\}_i$ is satisfiable.
\item
Let $\psi_i(\vec{x}_i)$ be countably many $\Sigma_2$ formulas of various arities $n_i \in \#N$, such that for each $\Sigma_1$ $\theta(\vec{x}_i)$, if $\@T |= \forall \vec{x}_i\, (\theta(\vec{x}_i) -> \psi_i(\vec{x}_i))$, then $\@T |= \forall \vec{x}_i\, \neg \theta(\vec{x}_i)$.
Then $\@T \cup \{\forall \vec{x}_i\, \neg \psi_i(\vec{x}_i)\}_i$ is satisfiable.
\end{enumerate}
\end{theorem}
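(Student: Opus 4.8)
Suppose first that part (a) is established. I would then deduce (b) by applying (a) to the $\Pi_2$ formulas $\phi_i := \neg\psi_i$. To check the hypothesis of (a), let $\theta(\vec x_i)$ be $\Sigma_1$ and satisfiable in some model of $\@T$, and suppose toward a contradiction that $\phi_i \wedge \theta = \neg\psi_i \wedge \theta$ is satisfiable in no model of $\@T$; then $\@T \models \forall\vec x_i\,(\theta \to \psi_i)$, so by the hypothesis of (b) we get $\@T \models \forall\vec x_i\,\neg\theta$, contradicting satisfiability of $\theta$. Thus $\phi_i\wedge\theta$ is satisfiable, the hypothesis of (a) holds, and the conclusion $\@T \cup \{\forall\vec x_i\,\phi_i\}_i = \@T \cup \{\forall\vec x_i\,\neg\psi_i\}_i$ of (a) is exactly that of (b). So it remains to prove (a), which I would treat as a higher-order instance of the Baire category theorem transported along a ``continuous open'' parametrization, in the spirit of \cref{rmk:estr-osat-cts}.

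For (a), I would fix by \cref{ex:estr-per} (or \cref{ex:estr-marked} when $\@L$ has function symbols) a quasi-Polish parametrization $p : \@M -> X$ with $\Sigma_1$ saturations whose fibers are exactly the countable models of $\@T$; since $\@T$ is satisfiable it has a countable model, so $X \neq \emptyset$. Each $\phi_i^\@M \subseteq M^{n_i}_X$ is then $\*\Pi^0_2$ (as $\phi_i$ is $\Pi_2$) and $\Iso_X(\@M)$-invariant, since isomorphisms preserve satisfaction of formulas. The key step is to show that \emph{each $\phi_i^\@M$ is dense in $M^{n_i}_X$}. Given a nonempty basic open $U \subseteq M^{n_i}_X$, use \cref{def:estr-osat} to pick a $\Sigma_1$ formula $\theta(\vec x_i)$ with $\theta^\@M = \Iso_X(\@M)\cdot U$. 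As $\emptyset \neq U \subseteq \theta^\@M$, the formula $\theta$ is satisfiable in a model of $\@T$, so by hypothesis $\phi_i^\@M \cap \theta^\@M \neq \emptyset$; choosing $\vec b = g\cdot\vec a$ in this intersection with $\vec a \in U$ and $g \in \Iso_X(\@M)$, invariance of $\phi_i^\@M$ gives $\vec a = g^{-1}\cdot\vec b \in \phi_i^\@M$, so $\phi_i^\@M \cap U \neq \emptyset$. This is precisely where $\Sigma_1$-definability of saturations and invariance of formula interpretations are used: the model-theoretic ``density of $\phi_i$ among $\Sigma_1$ types'' is pulled back, through the open parametrization, to genuine topological density in $M^{n_i}_X$.

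Finally I would pass from density in the fiber powers to a single generic fiber by a fiberwise Baire argument. For each $i$ fix a countable basis $\@S_i$ of open sections of $M^{n_i}_X$; for $T \in \@S_i$ the restriction $p|T : T -> p(T)$ is a homeomorphism onto an open subset of $X$, so $p(T \cap \phi_i^\@M)$ is $\*\Pi^0_2$ in $X$ and dense in $p(T)$ (as $\phi_i^\@M$ is dense). Hence
\begin{equation*}
D_{i,T} := (X \setminus p(T)) \cup p(T \cap \phi_i^\@M)
\end{equation*}
is a dense $\*\Pi^0_2$ subset of $X$: it contains the closed set $X \setminus p(T)$, so meets every open $V$ disjoint from $p(T)$, and meets every $V$ with $V \cap p(T) \neq \emptyset$ by density in $p(T)$. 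By the Baire category theorem for quasi-Polish spaces \cref{it:qpol-t0-cb-baire}, $\bigcap_i \bigcap_{T \in \@S_i} D_{i,T}$ is dense, hence nonempty. For any $x$ in it and any $\vec a \in M_x^{n_i}$, pick $T \in \@S_i$ with $\vec a \in T$; then $x = p(\vec a) \in p(T)$, so $x \in D_{i,T}$ forces $\vec a = (p|T)^{-1}(x) \in \phi_i^\@M$, i.e. $\@M_x \models \phi_i(\vec a)$. Thus $\@M_x \models \forall\vec x_i\,\phi_i$ for all $i$, and since $\@M_x \models \@T$, the fiber $\@M_x$ witnesses satisfiability of $\@T \cup \{\forall\vec x_i\,\phi_i\}_i$.

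The main obstacle is the density step of the third-to-last paragraph: turning the purely combinatorial satisfiability hypothesis into topological density of $\phi_i^\@M$, which crucially requires both that saturations are $\Sigma_1$-definable (so the hypothesis, phrased for $\Sigma_1$ formulas, applies to the invariant open set $\Iso_X(\@M)\cdot U$) and that formula interpretations are isomorphism-invariant (so density of the saturation reflects down to density at $U$ itself). Once that is in place, the concluding fiberwise Baire computation is routine bookkeeping with open sections.
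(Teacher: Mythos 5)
Your proposal is correct and follows essentially the same route as the paper: parametrize $\@T$ via \cref{ex:estr-per}, use $\Sigma_1$ saturations together with isomorphism-invariance of $\phi_i^\@M$ to show each $\phi_i^\@M \subseteq M^{n_i}_X$ is dense $\*\Pi^0_2$, and then transfer Baire category to $X$ along a countable basis of open sections; your $D_{i,T}$ sets merely make explicit the paper's remark that $p$-images of meager sets are meager. Your explicit deduction of (b) from (a) is likewise exactly the duality the paper invokes with ``the statements are dual.''
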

\begin{proof}
The statements are dual; we prove (a).
By \cref{ex:estr-per}, there is a quasi-Polish parametrization $p : \@M -> X$ of $\@T$ with $\Sigma_1$ saturations, i.e., a ``continuous open map $\@M : X ->> \{\text{models of $\@T$}\}$''.
Then ``the $\@M$-preimage of each dense $\Pi_2$ $\phi_i$ is dense $\*\Pi^0_2$'', i.e., each $\phi_i^\@M \subseteq M^{n_i}_X$ is dense $\*\Pi^0_2$, since for each open $\emptyset \ne U \subseteq M^{n_i}_X$, letting $\theta$ be $\Sigma_1$ with $\theta^\@M = \Iso_X(\@M) \cdot U \supseteq U$, $\theta$ is satisfiable in a model of $\@T$, whence so is $\phi \wedge \theta$, whence $\emptyset \ne \phi^\@M \wedge \theta^\@M = \Iso_X(\@M) \cdot (\phi^\@M \cap U)$ since $\phi^\@M$ is $\Iso_X(\@M)$-invariant.
Hence, each $\phi_i^\@M \subseteq M^{n_i}_X$ is comeager.
It follows that each $(\forall \vec{x}_i\, \phi_i(\vec{x}_i))^\@M \subseteq X$ is comeager, since dually, the $p$-image of each meager set in $M^{n_i}_X$ is easily seen to be meager in $X$, by considering a countable basis of open sections in $M^{n_i}_X$.
Thus $\bigcap_i (\forall \vec{x}_i\, \phi_i(\vec{x}_i))^\@M$ is also comeager, hence dense by Baire category; picking any $x$ in this set, we have $\@M_x |= \@T \cup \{\forall \vec{x}_i\, \phi_i(\vec{x}_i)\}_i$.
\end{proof}

\begin{example}
Taking $\@T$ to be a propositional theory, as in \cref{ex:prop} (and the arities $n_i$ above to all be $0$), we recover the Baire category theorem for quasi-Polish spaces as a special case.
\end{example}

\begin{remark}
Via Morleyization, \cref{thm:omittype} generalizes to an omitting types theorem for an arbitrary countable fragment $\@F$, which generalizes the classical omitting types for $\@L_{\omega_1\omega}$ (see e.g., \cite[4.9]{Mlo1o}) to our more general fragments (\cref{def:morley}) not necessarily closed under $\neg, \wedge, \vee, \forall, \exists$.
\end{remark}

In particular, \cref{thm:omittype} is sufficiently general to imply Montalbán's omitting types theorem for $\Pi_\alpha$ \cite[3.2]{Mscott}, using which we may generalize most parts of Montalbán's characterization of Scott rank \cite[1.1]{Mscott} to our weaker notions of $\Sigma_\alpha, \Pi_\alpha$:

\begin{theorem}[characterization of Scott rank]
\label{thm:scott}
Let $p : \@M -> X$ be a second-countable étale structure with $\Sigma_1$ saturations over a quasi-Polish space $X$, and let $x \in X$.
The following are equivalent:
\begin{enumerate}[label=(\roman*)]
\item \label{thm:scott:orbit}
Every automorphism orbit of $\@M_x$ is $\Sigma_\alpha$-definable without parameters.
\item \label{thm:scott:scott}
$\@M_x$ has a $\Pi_{\alpha+1}$ Scott sentence $\phi$.
\item \label{thm:scott:iso}
$\Iso_X(\@M) \cdot x \subseteq X$ is $\*\Pi^0_{\alpha+1}$.
\item \label{thm:scott:type}
For every $\Pi_\alpha$ formula $\psi(\vec{z})$ with $\psi^{\@M_x} \ne \emptyset$, there is a $\Sigma_\alpha$ formula $\theta(\vec{z})$ with $\emptyset \ne \theta^{\@M_x} \subseteq \psi^{\@M_x}$.
\end{enumerate}
\end{theorem}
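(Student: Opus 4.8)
The plan is to run the cycle of implications (i) $\Rightarrow$ (ii) $\Rightarrow$ (iii) $\Rightarrow$ (iv) $\Rightarrow$ (i), organized around the observation that (i), (ii), (iv) are \emph{intrinsic} to the single countable structure $\@M_x$, whereas (iii) is the ``external'' condition referring to the ambient parametrization. The decisive bridge between the two kinds of condition is \cref{thm:estr-osat-pi02}: the fibers of $\@M$ are \emph{exactly} the countable models of a $\Pi_2$ theory $\@T$, so $\@M \to X$ realizes every countable model of its own theory. This lets me convert external statements about $\Iso_X(\@M) \cdot x \subseteq X$ into genuine statements about all countable models of $\@T$, and conversely to realize any countable model of $\@T$ that I build (e.g.\ by omitting types) as a fiber $\@M_y$.

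For (i) $\Rightarrow$ (ii), I would specialize the étale machinery to the one-point base $X = 1$, where $\@M$ is just the discrete structure $\@M_x$: its basic open sets are singletons $\{\vec a\}$, and the $\Iso_1(\@M) = \Aut(\@M_x)$-saturation of $\{\vec a\}$ is precisely the orbit of $\vec a$. Thus (i) says exactly that $\@M_x$ (over a point) has $\Sigma_\alpha$ saturations, and \cref{thm:estr-osat-pi0a} delivers a $\Pi_{\alpha+1}$ axiomatization, i.e.\ a genuine $\Pi_{\alpha+1}$ Scott sentence. For (ii) $\Rightarrow$ (iii), interpreting a $\Pi_{\alpha+1}$ Scott sentence $\phi$ in $\@M$ gives $\phi^\@M = \{y \in X \mid \@M_y \cong \@M_x\} = \Iso_X(\@M) \cdot x$, which is $\*\Pi^0_{\alpha+1}$ by the complexity bookkeeping of \cref{def:estr}. (As a quick sanity check, (i) $\Rightarrow$ (iv) is immediate, since any $\Pi_\alpha$-definable subset of $\@M_x^n$ is $\Aut(\@M_x)$-invariant, hence, if nonempty, contains a whole orbit, which by (i) is defined by a nonempty $\Sigma_\alpha$ formula sitting inside it.)

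The step I expect to be the main obstacle is (iii) $\Rightarrow$ (iv), which I would prove by contraposition. By \cref{thm:lopez-escobar} (in its dual $\Pi$-form at arity $0$), the invariant $\*\Pi^0_{\alpha+1}$ set $\Iso_X(\@M)\cdot x$ equals $\phi^\@M$ for some $\Pi_{\alpha+1}$ sentence $\phi$. Suppose (iv) fails, witnessed by a $\Pi_\alpha$ formula $\psi$ with $\psi^{\@M_x} \ne \emptyset$ admitting no nonempty $\Sigma_\alpha$ formula below it. Morleyizing the relevant fragment down to level $\alpha$ (\cref{thm:estr-osat-morley}) turns $\psi$ into a relation symbol $R_\psi$ and the full $\Pi_{\alpha+1}$-theory of $\@M_x$ into a $\Pi_2$ theory $\@T'$; the failure of (iv) is then precisely the non-isolation hypothesis of \cref{thm:omittype}(b) for the formula $R_\psi$. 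That theorem produces a countable model $\@N \models \@T'$ omitting $R_\psi$, whose $\@L$-reduct satisfies the whole $\Pi_{\alpha+1}$-theory of $\@M_x$ (so $\@N \models \phi$) yet has $\psi^\@N = \emptyset$, so $\@N \not\cong \@M_x$. Realizing $\@N$ as a fiber $\@M_y$ (possible since $\@N$ is a countable model of $\@T$), we get $y \in \phi^\@M = \Iso_X(\@M)\cdot x$, hence $\@M_y \cong \@M_x$, a contradiction. The delicate part is exactly the matching between the semantic density failure $\neg$(iv) and the syntactic non-isolation premise of \cref{thm:omittype}(b), which rests on the fact that $\forall\vec z\,(\theta \to \psi)$ is itself $\Pi_\alpha$ for $\theta \in \Sigma_\alpha$, $\psi \in \Pi_\alpha$.

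Finally, for (iv) $\Rightarrow$ (i) I would run a back-and-forth over the countable structure $\@M_x$, in which condition (iv) supplies exactly the level-$\alpha$ density needed for the extension step, so that $\Sigma_\alpha$-equivalence of tuples coincides with being automorphic; a Baire/omitting-types argument then extracts, for each of the countably many orbits, a single isolating $\Sigma_\alpha$ formula. This last arrow, together with (iii) $\Rightarrow$ (iv), constitutes the model-theoretic heart and is where the present weaker $\Sigma_\alpha,\Pi_\alpha$ hierarchy (rather than the classical one) must be handled with care; the remaining arrows are either direct interpretations or a direct appeal to \cref{thm:estr-osat-pi0a}. Throughout, the passage between a fixed ordinal $\alpha$ and the base level is mediated by Morleyization, which simultaneously explains how the relevant parts of Montalbán's classical characterization transfer to this setting.
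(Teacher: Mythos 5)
Your proposal is correct and matches the paper's own proof, which is precisely Montalbán's argument with the same three substitutions you identify: \cref{thm:estr-osat-pi0a} for (i)$\implies$(ii), the Lopez-Escobar \cref{thm:lopez-escobar} for passing between (ii) and (iii), and the omitting types \cref{thm:omittype} for reaching (iv), with (iv)$\implies$(i) handled by the classical back-and-forth. Routing (iii)$\implies$(iv) through the Lopez-Escobar-produced Scott sentence rather than stating (ii)$\implies$(iv) separately is only a cosmetic reorganization of the same cycle (and the crux of the omitting-types step is really that $\@T\cup\{\phi\}$ is a Scott theory plus completeness, not the complexity of $\forall\vec z\,(\theta\to\psi)$).
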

To recover \cite[1.1]{Mscott}, Morleyize negated atomic formulas as in \cref{ex:morley-neg} and take the standard parametrization from \cref{ex:estr-infdec}.
The proof is the same as in \cite{Mscott} but with results used therein replaced by their generalizations from this paper, namely \cref{thm:estr-osat-pi0a} for \cref{thm:scott:orbit}$\implies$\cref{thm:scott:scott}, the Lopez-Escobar \cref{thm:lopez-escobar} for \cref{thm:scott:scott}$\iff$\cref{thm:scott:iso}, and \cref{thm:omittype} for \cref{thm:scott:scott}$\implies$\cref{thm:scott:type}.

\begin{remark}
The omitting types theorem has an analog in topos theory, called the \emph{$\neg\neg$-subtopos}, which is the ``theory of models of $\@T$ omitting all non-isolated $\Pi_1$ types''; see \cite[A4.5.9]{Jeleph}.
\end{remark}

\section{Imaginary sorts and the Joyal--Tierney theorem}
\label{sec:jt}

\begin{definition}
For an étale structure $p : \@M -> X$, an \defn{étale $\Iso_X(\@M)$-space over $X$} is an étale space $q : A -> X$ equipped with a (jointly) continuous action of the topological groupoid $\Iso_X(\@M)$, where each $g : \@M_x \cong \@M_y \in \Iso_X(\@M)$ acts via a bijection $A_x \cong A_y$.
\end{definition}

The prototypical étale $\Iso_X(\@M)$-space is $M$.
We may build other étale $\Iso_X(\@M)$-spaces via the following basic operations:
\begin{itemize}

\item
Each fiber power $M^n_X$ is also an étale $\Iso_X(\@M)$-space.
More generally, a fiber product of finitely many étale $\Iso_X(\@M)$-spaces is again such a space, under the diagonal action.

\item
If $A -> X$ is an étale $\Iso_X(\@M)$-space, and $U \subseteq A$ is an open set invariant under the action of $\Iso_X(\@M)$, then $U$ is itself an étale $\Iso_X(\@M)$-space.
If $A = M^n_X$, and $\@M$ has $\Sigma_1$ saturations, then such $U$ are precisely the $\Sigma_1$-definable $n$-ary relations $\phi^\@M \subseteq M^n_X$.

\item
A (possibly infinite) disjoint union of étale $\Iso_X(\@M)$-spaces is again such.

\item
If $p : A -> X$ is an étale $\Iso_X(\@M)$-space, and ${\sim} \subseteq A \times_X A$ is an $\Iso_X(\@M)$-invariant open (in $A \times_X A$) equivalence relation, then $p$ and the action descend to the quotient space $A/{\sim}$ which again becomes an étale $\Iso_X(\@M)$-space.

\end{itemize}
We now introduce syntactic names for these last two operations, generalizing formulas:

\begin{definition}
\label{def:imag}
For a countable $\@L_{\omega_1\omega}$ theory $\@T$, a \defn{$\Sigma_1$ imaginary sort over $\@T$} is an expression
\begin{align*}
\Phi = (\bigsqcup_i \phi_i)/(\bigsqcup_{i,j} \epsilon_{ij})
\end{align*}
where $i, j$ run over the same countable index set, each $\phi_i$ is a $\Sigma_1$ formula in some number of free variables $n_i \in \#N$, each $\epsilon_{ij}$ is a $\Sigma_1$ formula in $n_i+n_j$ variables, and $\@T$ proves the sentences
\begin{align*}
\forall \vec{x}, \vec{y}\, \paren[\big]{\epsilon_{ij}(\vec{x},\vec{y}) &-> \phi_i(\vec{x}) \wedge \phi_j(\vec{y})}, \\
\forall \vec{x}\, \paren[\big]{\phi_i(\vec{x}) &-> \epsilon_{ii}(\vec{x},\vec{x})}, \\
\forall \vec{x}, \vec{y}\, \paren[\big]{\epsilon_{ij}(\vec{x},\vec{y}) &-> \epsilon_{ji}(\vec{y},\vec{x})}, \\
\forall \vec{x}, \vec{y}, \vec{z}\, \paren[\big]{\epsilon_{ij}(\vec{x},\vec{y}) \wedge \epsilon_{jk}(\vec{y},\vec{z}) &-> \epsilon_{ik}(\vec{x},\vec{z})}.
\end{align*}
Equivalently by the completeness theorem, in each $\@M |= \@T$,
\begin{align*}
\bigsqcup_{i,j} \epsilon_{ij}^\@M \subseteq \bigsqcup_{i,j} M^{n_i+n_j} \cong (\bigsqcup_i M^{n_i})^2
\text{ is an equivalence relation on }
\bigsqcup_i \phi_i^\@M \subseteq \bigsqcup_i M^{n_i}.
\end{align*}
We then define the \defn{interpretation of $\Phi$ in $\@M$} to be the quotient set
\begin{align*}
\Phi^\@M := (\bigsqcup_i \phi_i^\@M) / (\bigsqcup_{i,j} \epsilon_{ij}^\@M).
\end{align*}
Similarly, if $p : \@M -> X$ is an étale model of $\@T$, then we have an étale $\Iso_X(\@M)$-space $\Phi^\@M -> X$ defined in the same way, as the quotient of $\bigsqcup_i \phi_i^\@M -> X$ by the open equivalence relation $\bigsqcup_{i,j} \epsilon_{ij}^\@M$.
\end{definition}

\begin{remark}
\label{rmk:imag-spretop}
Thus, a $\Sigma_1$ imaginary is a syntactic name for a \emph{quotient} of a \emph{countable disjoint union} of \emph{$\Sigma_1$-definable subsets} of \emph{finite products} $M^n$ of copies of the underlying set or étale space $M$ of an (étale) model $\@M$.
We may always distribute these 4 types of operations over each other to put them into this order; thus, imaginaries are themselves closed under these operations:
\begin{itemize}

\item
If $\Phi = (\bigsqcup_i \phi_i)/(\bigsqcup_{i,j} \epsilon_{ij})$ and $\Psi = (\bigsqcup_k \psi_k)/(\bigsqcup_{k,l} \eta_{kl})$ are imaginaries, we may define their \defn{product sort} $\Phi \times \Psi := (\bigsqcup_{i,k} \phi_i \wedge \psi_k)/(\bigsqcup_{i,j,k,l} \epsilon_{ij} \wedge \eta_{kl})$.
There is also the \defn{singleton sort} (nullary product) $\*1 := \top/\top$ where here $\top$ is regarded as having no free variables.

\item
If $\Phi = (\bigsqcup_i \phi_i)/(\bigsqcup_{i,j} \epsilon_{ij})$ is a $\Sigma_1$ imaginary sort over $\@T$, a \defn{$\Sigma_1$-definable subsort} $\Psi \subseteq \Phi$ is given by countably many $\Sigma_1$ formulas $\psi_i$ such that ``$\bigsqcup_i \psi_i \subseteq \bigsqcup_i \phi_i$ is $\@T$-provably $(\bigsqcup_{i,j} \epsilon_{ij})$-invariant'', which can be expressed by certain $\Pi_2$ sentences much as in \cref{def:imag}.
Such $\Psi$ can also be thought of as a $\Sigma_1$ imaginary in its own right, namely $\Psi = (\bigsqcup_i \psi_i)/(\bigsqcup_{i,j} \epsilon_{ij} \wedge \psi_i \wedge \psi_j)$, such that $\Psi^\@M \subseteq \Phi^\@M$ for every $\@M |= \@T$.

\item
A \defn{countable disjoint union} of imaginaries $\bigsqcup_i \Phi_i$ is given by simply merging the formulas.

\item
Finally, if $\Phi = (\bigsqcup_i \phi_i)/(\bigsqcup_{i,j} \epsilon_{ij})$ is an imaginary, and $H \subseteq \Phi \times \Phi$ is a $\Sigma_1$-definable subsort which is $\@T$-provably an equivalence relation, then we may define the \defn{quotient sort} $\Phi/H$ by replacing the $\epsilon_{ij}$'s with the formulas $\eta_{ij}$ defining $H$.
\end{itemize}
\end{remark}

\begin{theorem}[Joyal--Tierney \cite{JTloc}]
\label{thm:jt}
Let $\@T$ be a countable $\@L_{\omega_1\omega}$ theory, $p : \@M -> X$ be a second-countable étale structure with $\Sigma_1$ saturations parametrizing models of $\@T$.
Then every second-countable étale $\Iso_X(\@M)$-space $q : A -> X$ is isomorphic to $\Phi^\@M$ for some $\Sigma_1$ imaginary $\Phi$ over $\@T$.
\end{theorem}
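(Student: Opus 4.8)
The plan is to realize $A$ as a \emph{quotient} of a countable disjoint union of $\Sigma_1$-definable sets, by first exhibiting it as an equivariant open image of such a union and then ``definabilizing'' the resulting kernel using the $\Sigma_1$ saturations hypothesis.

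\textbf{Reduction to a presentation.} First I would argue it suffices to construct a countable family of $\Sigma_1$ formulas $\phi_i(\vec{x})$ (of arities $n_i$) together with an $\Iso_X(\@M)$-equivariant continuous open surjection $\pi : \bigsqcup_i \phi_i^\@M ->> A$ over $X$. Given such a $\pi$, set $\epsilon_{ij}^\@M := \{(\vec{b},\vec{c}) \in \phi_i^\@M \times_X \phi_j^\@M \mid \pi(\vec{b}) = \pi(\vec{c})\}$, i.e.\ the preimage of the diagonal $\Delta_A \subseteq A \times_X A$ under $\pi \times_X \pi$. Since $A$ is étale, $\Delta_A$ is open, so each $\epsilon_{ij}^\@M$ is open in $M^{n_i+n_j}_X$; and since $\pi$ is equivariant, $\bigsqcup_{ij}\epsilon_{ij}^\@M$ is $\Iso_X(\@M)$-invariant. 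An invariant open set is its own saturation, so by the $\Sigma_1$ saturations hypothesis each $\epsilon_{ij}^\@M$ is defined by a $\Sigma_1$ formula $\epsilon_{ij}$. These formulas satisfy the equivalence-relation axioms of \cref{def:imag} in every fiber $\@M_x$, hence (being isomorphism-invariant and true in every countable model of $\@T$, as these are exactly the fibers up to isomorphism) are provable from $\@T$ by the completeness theorem. Thus $\Phi := (\bigsqcup_i \phi_i)/(\bigsqcup_{ij}\epsilon_{ij})$ is a $\Sigma_1$ imaginary over $\@T$, and $\pi$ descends to a continuous equivariant bijection $\Phi^\@M -> A$ over $X$, which is open (as $\pi$ is open and $\Phi^\@M$ carries the quotient topology), hence an isomorphism of étale $\Iso_X(\@M)$-spaces.

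\textbf{Constructing the presentation.} The heart is a local ``naming'' construction producing, for each $a \in A$, one equivariant map into $A$ whose image is an open invariant neighborhood of $a$. Fix $a$ over $x := q(a)$ and an open section $a \in S \subseteq A$, and consider $R := \{h \in \Iso_X(\@M) \mid \dom h, \cod h \in p(S) \text{ and } h \cdot S(\dom h) = S(\cod h)\}$, where $S(\,\cdot\,)$ denotes the section value. Using continuity of the action together with the openness of $\Delta_A$, one checks that $R$ is open and contains every identity morphism $1_{x'}$ with $x' \in p(S)$. Unwinding the subbasis $\brbr{U |-> V}$ of the pointwise-convergence topology at $1_x \in R$, I would extract finitely many open sections $U_1,\dots,U_m$ (marking a finite tuple $\vec{b} = (U_1(x),\dots,U_m(x)) \in M^N_x$) and an open $V \ni x$ with $V \subseteq p(S)$ such that $\bigcap_l \brbr{U_l |-> U_l} \cap \dom^{-1}(V) \cap \cod^{-1}(V) \subseteq R$. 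Writing $U := U_1 \times_X \cdots \times_X U_m$ (an open section of $M^N_X$ over $V$ through $\vec{b}$), the key feature of this neighborhood is that any $h$ with endpoints in $V$ satisfying $h(U(\dom h)) = U(\cod h)$ lies in each $\brbr{U_l |-> U_l}$, hence in $R$, so $h \cdot S(\dom h) = S(\cod h)$.

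This implication is exactly what lets me extend the continuous section-map $f_0 : U -> A$, $U(x') |-> S(x')$, equivariantly: define $f(g \cdot \vec{c}) := g \cdot f_0(\vec{c})$ for $\vec{c} \in U$. Well-definedness reduces, after cancelling the $g$'s, precisely to the implication just obtained. The domain $\Iso_X(\@M) \cdot U$ is open and invariant, hence equals $\phi^\@M$ for a $\Sigma_1$ formula $\phi$; $f$ is continuous since it factors the continuous map $(g,\vec{c}) |-> g \cdot f_0(\vec{c})$ through the open surjection $(g,\vec{c}) |-> g\cdot\vec{c}$ (open because $\Iso_X(\@M)$ is an open groupoid by \cref{thm:estr-osat-isogpd}); and $f$ is automatically open, being a continuous map of étale spaces over $X$. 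Since $a = f(\vec{b})$, the image of $f$ is an open invariant neighborhood of $a$, so as $A$ is second-countable (hence hereditarily Lindelöf) countably many such images cover $A$, and the disjoint union of the corresponding maps is the desired $\pi$. I expect the decisive step to be this naming construction — showing $R$ is open and contains the identities, and then converting the pointwise-convergence neighborhood of $1_x$ into a genuine finite tuple $\vec{b}$ whose marked-tuple-preservation forces membership in $R$ — since it is here that the $\brbr{U |-> V}$ topology on the isomorphism groupoid and the discreteness of the fibers of $A$ are both used essentially; the reduction and the passage to the quotient imaginary are comparatively routine given $\Sigma_1$ saturations.
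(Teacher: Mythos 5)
Your proposal is correct and follows essentially the same route as the paper's proof: name each open section $S$ of $A$ by an open section of some $M^n_X$ using continuity of the action at identity morphisms, extend equivariantly over the ($\Sigma_1$-definable) saturation of that name, cover $A$ by countably many such images via second countability, and quotient by the open invariant (hence $\Sigma_1$-definable) congruence kernel. The only notable variation is that you obtain continuity of the extended map by factoring through the open action map (openness coming from \cref{thm:estr-osat-isogpd}) rather than by the paper's direct verification; both arguments work.
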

The following proof is a streamlined version of the proofs in \cite[\S1.4]{AFscc} and \cite[8.1]{Cscc} for two specific parametrizations (namely those in \cref{rmk:estr-subn,ex:estr-findecs}).
\begin{proof}
Let $a \in A_x$, and let $a \in S \subseteq A$ be an open section.
Since the identity $1_{\@M_x} \in \Iso_X(\@M)$ fixes $a$, by continuity of the action, there are open neighborhoods $a \in S' \subseteq S$ and $1_{\@M_x} \in \brbr{T |-> T'} \subseteq \Iso_X(\@M)$, where we may assume $T, T' \subseteq M^n_X$ are open sections, such that $\brbr{T |-> T'} \cdot S' \subseteq S$.
Since $1_{\@M_x} \in \brbr{T |-> T'}$, we have $x \in p(T \cap T')$.
By replacing $S$ with $S' \cap q^{-1}(p(T \cap T'))$ and $T, T'$ with $T \cap T' \cap p^{-1}(q(S'))$, we get $p(T) = q(S)$ and $\brbr{T |-> T} \cdot S \subseteq S$.

So we have shown that $A$ has a basis of open sections $S \subseteq A$ for which there exists an open section $T \subseteq M^n_X$ for some $n$ such that $p(T) = q(S)$ and $\brbr{T |-> T} \cdot S \subseteq S$.
For such $S, T$, consider
\begin{align*}
h_{S,T} : \Iso_X(\@M) \cdot T &--> A \\
(g : \@M_y \cong \@M_z) \cdot T_y &|--> g \cdot S_y.
\end{align*}
(Here we are abusing notation by writing $T_y$ for the unique element of $T_y$, etc.)
We claim that this is a well-defined, continuous $\Iso_X(\@M)$-equivariant map, whose image contains $S$.
Well-definedness is because if $T_y \ne \emptyset$ then $S_y \ne \emptyset$ as $p(T) \subseteq q(S)$, and if $g \cdot T_y = g' \cdot T_{y'}$, where $g : \@M_y \cong \@M_z$ and $g' : \@M_{y'} \cong \@M_z$ and both $y, y' \in p(T)$, then $g^{\prime-1} \circ g : \@M_y \cong \@M_{y'} \in \brbr{T |-> T}$ and so $g \cdot S_y = g' \cdot S_{y'}$.
Equivariance is clear; and $S \subseteq \im(h_{S,T})$ by taking $g = 1$, using $p(T) = q(S)$.
For continuity: suppose
\begin{equation*}
h_{S,T}(g \cdot T_y) = g \cdot S_y \in S',
\end{equation*}
where $S' \subseteq A$ is another open section for which there is an open section $T' \subseteq M^{n'}_X$ such that $p(T') = q(S') \ni z$ and $\brbr{T' |-> T'} \cdot S' \subseteq S'$.
Let $g^{-1} \cdot T'_z \in T'' \subseteq M^{n'}_X$ be another open section, and let
\begin{equation*}
U := q(S \cap (\brbr{T' |-> T''} \cdot S')) \subseteq X.
\end{equation*}
Then $S_y = g^{-1} \cdot S'_z \in \brbr{T' |-> T''} \cdot S'$ whence $y \in U$ whence $g \cdot T_y \in \brbr{T'' |-> T'} \cdot (T \cap p^{-1}(U))$; and
\begin{align*}
h_{S,T}(\brbr{T'' |-> T'} \cdot (T \cap p^{-1}(U)))
&= \brbr{T'' |-> T'} \cdot (S \cap q^{-1}(U)) \\
&= \brbr{T'' |-> T'} \cdot (S \cap (\brbr{T' |-> T''} \cdot S')) \quad \text{since $q|S$ is injective} \\
&\subseteq \brbr{T' |-> T'} \cdot S' \subseteq S'.
\end{align*}

Now since $A$ is second-countable, hence Lindelöf, it has a countable cover by open sections $S_i \subseteq A$ for which there exist corresponding $T_i \subseteq M^{n_i}_X$ as above.
For each $i$, let $\phi_i$ be a $\Sigma_1$ formula defining $\Iso_X(\@M) \cdot T_i \subseteq M^{n_i}_X$.
We then have a continuous equivariant surjection
\begin{align*}
h := \bigsqcup_i h_{S_i,T_i} : \bigsqcup_i \phi_i^\@M -->> A.
\end{align*}
Its congruence kernel ${\sim} = \{(\vec{a},\vec{b}) \in (\bigsqcup_i \phi_i^\@M)^2_X \mid h(\vec{a}) = h(\vec{b})\} \subseteq (\bigsqcup_i M^{n_i}_X)^2_X \cong \bigsqcup_{i,j} M^{n_i+n_j}_X$ is an invariant open set, thus there are $\Sigma_1$ formulas $\epsilon_{ij}$ defining ${\sim}$ restricted to each $M^{n_i+n_j}_X$.
Then $\Phi := (\bigsqcup_i \phi_i)/(\bigsqcup_{i,j} \epsilon_{ij})$ is a $\Sigma_1$ imaginary over $\@T$, because the interpretation of $\bigsqcup_{i,j} \epsilon_{ij}$ in each countable model of $\@T$, which is isomorphic to some fiber $\@M_x$, is the equivalence relation $\sim_x$ on $\bigsqcup_i \phi_i^{\@M_x}$.
And $h$ descends to an isomorphism of étale $\Iso_X(\@M)$-spaces
$\Phi^\@M = (\bigsqcup_i \phi_i^\@M)/{\sim} \cong A$.
\end{proof}

\begin{example}
If $X = 1$ and $\@M$ is a countable structure with $\Sigma_1$-definable automorphism orbits, as in \cref{rmk:estr-osat-pt}, then \cref{thm:jt} says that every continuous action of $\Aut(\@M)$ on a countable discrete space $A$ is named by some $\Sigma_1$ imaginary of $\@M$.
To see this directly: for every $a \in A$, the stabilizer $\Aut(\@M)_a \subseteq \Aut(\@M)$ is a clopen subgroup, hence contains $\Aut(\@M,\vec{b})$ for finitely many constants $\vec{b} \in M^n$; then the orbit $\Aut(\@M) \cdot a$ is a definable quotient of the $\Sigma_1$-definable orbit $\Aut(\@M) \cdot \vec{b}$.
The above proof can be seen as the natural generalization of this to étale structures.
\end{example}

\begin{remark}
\label{rmk:jt}
\Cref{thm:jt} says that the map
\begin{align*}
\{\text{$\Sigma_1$ imaginaries over $\@T$}\} &\overset{\sim}{-->} \{\text{second-countable étale $\Iso_X(\@M)$-spaces}\} \\
\Phi &|--> \Phi^\@M
\end{align*}
is surjective up to isomorphism.
The full Joyal--Tierney theorem says that it is in fact an equivalence of categories; \cref{thm:jt} above is merely the key ingredient.
Namely, the category on the left has:
\begin{itemize}
\item  $\Sigma_1$ imaginary sorts $\Phi$ as objects;
\item  subobjects of $\Phi$ are $\Sigma_1$-definable subsorts $\Psi \subseteq \Phi$ (\cref{rmk:imag-spretop}), which (modulo $\@T$-provable equivalence) correspond bijectively to open invariant subsets of $\Phi^\@M$ by $\Sigma_1$ saturations;
\item  morphisms $\Theta : \Phi -> \Psi$ are \defn{$\Sigma_1$-definable functions}, i.e., $\Sigma_1$-definable subsorts $\Theta \subseteq \Phi \times \Psi$ which are $\@T$-provably the graph of a function.
We usually identify definable functions modulo $\@T$-provable equivalence.
\end{itemize}
For a countable $\Pi_2$ theory $\@T$, this category of $\Sigma_1$ imaginaries is called the \defn{syntactic $\sigma$-pretopos} (also known as \defn{classifying $\sigma$-pretopos}) of the theory $\@T$, and can be thought of as a canonical algebraic representation of the syntax of $\@T$; see \cite[D1.4]{Jeleph}, \cite[\S10]{Cscc}.
Now the above equivalence boils down to the standard category-theoretic fact \cite[D3.5.6]{Jeleph} that a functor preserving finite categorical limits is an equivalence iff it is surjective on objects up to isomorphism and bijective on subobjects (in the usual categorical parlance, \emph{essentially surjective}, \emph{conservative}, and \emph{full on subobjects}).
\end{remark}

\section{Interpretations and functors}
\label{sec:interp}

Taking a more global view, the equivalence of categories in \cref{rmk:jt} says that the map
\begin{align*}
\yesnumber
\label{eq:interp0}
\{\text{countable $\Pi_2$ theories}\} &`--> \{\text{(quasi-)Polish groupoids}\} \\
\@T &|--> \Iso(\text{some parametrization of $\@T$})
\end{align*}
is ``injective'': we may recover a theory from the groupoid of its parametrization.
To make this statement precise, we need to discuss the appropriate structure on the class of all theories.

\begin{definition}
\label{def:interp}
Let $\@T_1, \@T_2$ be countable $\Pi_2$ theories in two respective countable languages $\@L_1, \@L_2$.
A \defn{$\Sigma_1$ interpretation} $\@F : \@T_1 -> \@T_2$ is a ``model of $\@T_1$ within the category of $\Sigma_1$ imaginaries over $\@T_2$'':
\begin{itemize}
\item
First, one defines an interpretation $\@F : \@L_1 -> \@T_2$ to be an ``$\@L_1$-structure in $\@T_2$-imaginaries'', consisting of an underlying $\Sigma_1$ imaginary $F$ over $\@T_2$, an interpretation of each $n$-ary $R \in \@L_1$ as a $\Sigma_1$-definable subsort $R^\@F \subseteq F^n$, and an interpretation of each $n$-ary $f \in \@L_1$ as a $\Sigma_1$-definable function $f^\@F : F^n -> F$.
\item
Next, one inductively defines, in the usual way, interpretations of $\@L_1$-terms and formulas as definable functions and subsorts over $\@T_2$.
\item
One then says that $\@F$ satisfies an $\@L_1$-sentence $\phi \in \@T_1$ if its interpretation $\phi^\@F$, which will be a definable subsort of the singleton sort $\*1$ over $\@T_2$ (recall \cref{rmk:imag-spretop}), is the entirety of $\*1$.
\item
Finally, one can extend the interpretation of $\@L_1$-formulas to imaginaries $\Phi$ over $\@T_1$, which become interpreted as imaginaries $\Phi^\@F$ over $\@T_2$.
\end{itemize}
The categorical viewpoint of \cref{rmk:jt} is particularly useful when dealing with interpretations: a $\Sigma_1$ interpretation $\@F : \@T_1 -> \@T_2$ is simply a functor $\Phi |-> \Phi^\@F$ from the syntactic $\sigma$-pretopos of $\@T_1$ to the syntactic $\sigma$-pretopos of $\@T_2$, preserving finite limits and countable colimits.
For instance, this makes it easy to define the \emph{composition} of interpretations $\@T_1 -> \@T_2 -> \@T_3$.
See \cite[\S10]{Cscc}.

A \defn{$\Sigma_1$-definable isomorphism} $h : \@F \cong \@G : \@T_1 -> \@T_2$ between two interpretations is defined in the same way as a usual isomorphism, but replacing the underlying function with a $\Sigma_1$-definable function over $\@T_2$.
Equivalently, it is a natural isomorphism between functors.

An interpretation
\begin{align*}
\@F : \@T_1 &--> \@T_2 \\
\intertext{can be thought of as a syntactic specification for a map}
\yesnumber
\label{eq:interp-mod}
\{\text{models of $\@T_1$}\} &<-- \{\text{models of $\@T_2$}\} : \@F^* \\
\@F^\@M &<--| \@M.
\end{align*}
Namely, given a (countable/étale) $\@M |= \@T_2$, the model $\@F^*(\@M) = \@F^\@M |= \@T_1$ has underlying set/étale space $F^\@M$ and each symbol $P \in \@L_1$ interpreted as $(P^\@F)^\@M$.
Similarly, a $\Sigma_1$-definable isomorphism $h : \@F \cong \@G$ between interpretations yields, for each $\@M |= \@T_2$, an $\@L_1$-isomorphism $h^\@M : \@F^\@M \cong \@G^\@M$.
\end{definition}

For detailed background on interpretations and imaginaries (albeit in the context of finitary first-order logic), see \cite[Ch.~5]{Hmod}.
We first give a familiar finitary example:

\begin{example}
The construction from each integral domain $\@R$ of its field of fractions is specified by an interpretation
$\@F : (\text{theory of fields}) -> (\text{theory of integral domains})$:
\begin{itemize}
\item  The underlying imaginary is $F = \phi/\epsilon$ where $\phi, \epsilon$ are the formulas in the language of rings:
\begin{align*}
\phi(x,y) &:= (y \ne 0), &
\epsilon(x_1,y_1,x_2,y_2) := (x_1y_2 = x_2y_1).
\end{align*}
Thus given an integral domain $\@R$, the field of fractions $\@F^\@R$ has underlying set
\begin{align*}
F^\@R = \{(a,b) \in R^2 \mid b \ne 0\}/{\sim} \quad \text{where} \quad (a_1,b_1) \sim (a_2,b_2) \coloniff a_1b_2 = a_2b_1.
\end{align*}
\item  The operation $+$ of fields is interpreted as the definable function $+^\@F = \psi/\epsilon^3$ where
\begin{align*}
\psi(x_1,y_1,x_2,y_2,x_3,y_3) &:= \epsilon(x_1y_2+x_2y_1, y_1y_2, x_3, y_3).
\end{align*}
In other words, for an integral domain $\@R$, this defines the graph of $+$ on $\@F^\@R$:
\begin{align*}
[(a_1,b_1)] + [(a_2,b_2)] = [(a_3,b_3)]  \iff  (a_1b_2+a_2b_1, b_1b_2) \sim (a_3,b_3).
\end{align*}
\end{itemize}
Similarly for the other operations.
(This is not a $\Sigma_1$ interpretation, unless we include a unary relation symbol for ``nonzero'' in the language of integral domains.)
\end{example}

For an example that uses the availability of countable disjoint unions:

\begin{example}
We have an interpretation from the theory of groups to the theory of sets, that specifies the construction from a set $X$ of the free group over $X$.
Its underlying imaginary is a countable disjoint union $\bigsqcup_{\vec{s} \in \{\pm1\}^{<\omega}} \phi_s$ where for each $\vec{s} = (s_0,\dotsc,s_{n-1})$,
the formula $\phi_{\vec{s}}(x_0,\dotsc,x_{n-1}) := \bigwedge_{s_i \ne s_{i+1}} (x_i \ne x_{i+1})$ says ``$x_0^{s_0} \dotsm x_{n-1}^{s_{n-1}}$ is a reduced word in the free group''.
\end{example}

\begin{example}
The notion of $\Sigma_1$ interpretation subsumes that of étale structure, which is essentially the same thing as an interpretation into a propositional theory.
Indeed, recall from \cref{ex:prop} that a quasi-Polish space $X$ is the space of models of a countable $\Pi_2$ propositional theory $\@T_0$.
A second-countable étale space $p : A -> X$ then corresponds to a $\Sigma_1$ imaginary over $\@T_0$ (this is the trivial case of the Joyal--Tierney \cref{thm:jt}, for the trivial étale structure $X -> X$).
Thus, a $\Sigma_1$ interpretation $\@M$ from another countable $\Pi_2$ theory $\@T$ into $\@T_0$ is a model of $\@T$ in $\@T_0$-imaginaries, i.e., étale spaces over $X$, i.e., an étale model of $\@T$ over $X$.
The induced map $\@M^*$ as in \cref{eq:interp-mod}, from models $x \in X$ of $\@T_0$ to models of $\@T$, is just the ``continuous map $x |-> \@M_x$'' from \cref{rmk:estr-cts}.
\end{example}

\begin{remark}
\label{rmk:2cat}
Thus, a natural structure to put on the class of all countable $\Pi_2$ theories, on the left-hand side of \cref{eq:interp0}, is that of a \defn{2-category}, whose objects are countable $\Pi_2$ theories $\@T$, morphisms are $\Sigma_1$ interpretations $\@F : \@T_1 -> \@T_2$, and 2-cells are $\Sigma_1$-definable isomorphisms%
\footnote{It makes sense to consider more generally non-invertible 2-cells which are definable homomorphisms; however, in order to reflect these on the semantic (right) side of \cref{eq:interp0}, one should then replace $\Iso_X(\@M)$ with the \emph{homomorphism category}, a topological category, which is a more involved notion than a topological groupoid.
See \cite{Mtop2}.}
between interpretations $h : \@F \cong \@G : \@T_1 -> \@T_2$.
We may visualize this 2-category as follows:
\begin{equation*}
\begin{tikzcd}
\@T_1 \rar[bend left,"\@F"] \rar[bend right,"\@G"'] \rar[phantom,"\scriptstyle\Down h"] &
\@T_2 \rar["\@H"] &
\@T_3
\end{tikzcd}
\end{equation*}
For general background on 2-categories, see \cite[B1.1]{Jeleph}, \cite[I~Ch.~7]{Bcat}.
\end{remark}

However, there remain annoying technicalities in defining a map (technically, a \emph{2-functor}) from this 2-category of theories to groupoids, as in \cref{eq:interp0}.
This is due to non-canonical coding choices.
To define such a 2-functor on objects, we have to pick, for each countable $\Pi_2$ theory $\@T$, some particular quasi-Polish parametrization $\@M -> X$ of it, and then take its isomorphism groupoid $\Iso_X(\@M)$.
Worse yet, to define the 2-functor on \emph{morphisms}, we have to pick, for an interpretation between theories $\@F : \@T_1 -> \@T_2$, some map $f : X_2 -> X_1$ between the respective chosen parametrizing spaces which realizes the operation on models specified by $\@F$, i.e., so that $(\@M_1)_{f(x)} \cong \@F^{(\@M_2)_x}$.

It is possible to make all of these coding choices in some explicit \emph{ad hoc} manner, and this was done in \cite{HMMMcomp} and \cite{Cscc}.
Here, we take the opportunity to illustrate a more abstract approach, via a standard trick in higher-dimensional category theory: instead of defining the 2-functor \cref{eq:interp0} directly, we first ``cover'' its domain with a bigger 2-category that is equivalent, but contains isomorphic copies that include all possible coding choices beforehand.

\begin{definition}
\label{def:interp-param}
A \defn{quasi-Polish parametrized $\Pi_2$ theory} will mean a tuple $(\@L,\@T,X,\@M,p)$, consisting of a countable $\Pi_2$ theory $\@T$ in some countable language $\@L$, together with some quasi-Polish parametrization $X$ of models of $\@T$ via an étale model $p : \@M -> X$ with $\Sigma_1$ saturations.
We will often abbreviate the tuple $(\@L,\@T,X,\@M,p)$ to $(\@T,X,\@M)$.

A \defn{parametrized $\Sigma_1$ interpretation} $(\@F,f) : (\@L_1,\@T_1,X_1,\@M_1,p_1) -> (\@L_2,\@T_2,X_2,\@M_2,p_2)$ consists of a $\Sigma_1$ interpretation $\@F : \@T_1 -> \@T_2$, a continuous map $f : X_2 -> X_1$, and a (specified, but left nameless) isomorphism of étale structures (over $X_2$) $f^*(\@M_1) \cong \@F^{\@M_2}$:
\begin{equation*}
\begin{tikzcd}
\@M_1 \dar["p_1"'] &
f^*(\@M_1) \cong \@F^{\@M_2} \lar \dar[""{coordinate,name=fp2}] &
\@M_2 \dlar["p_2",""{coordinate,name=p2}] \lar[mapsto,"\@F^*"'] \\
X_1 & X_2 \lar["f"']
\end{tikzcd}
\end{equation*}
Regarding $\@M_1, \@M_2$ as ``continuous maps to countable models'' as in \cref{rmk:estr-cts}, the picture becomes
\begin{equation*}
\begin{tikzcd}
X_1 \dar[two heads,"\@M_1"'] & X_2 \dar[two heads,"\@M_2"] \lar["f"'] \\
\{\text{models of $\@T_1$}\} & \{\text{models of $\@T_2$}\} \lar["\@F^*"'] \\[-1em]
\@T_1 \rar["\@F"] & \@T_2
\end{tikzcd}
\end{equation*}
Two consecutive parametrized interpretations may be composed in the obvious manner.
A \defn{$\Sigma_1$-definable isomorphism} between parametrized interpretations $h : (\@F,f) \cong (\@G,g)$ is simply one between the underlying intepretations $h : \@F \cong \@G$.

We thus get a 2-category of quasi-Polish parametrized $\Pi_2$ theories, parametrized $\Sigma_1$ interpretations, and $\Sigma_1$-definable isomorphisms, which admits canonical maps to both sides of \cref{eq:interp0}:%
\begin{equation}
\label{eq:interp2}
\begin{tikzcd}[column sep=0pt]
& \{\text{q-Pol parametrized $\Pi_2$ theories}\}
    \dlar
    \drar \\
\{\text{ctbl $\Pi_2$ theories}\} &&
\{\text{quasi-Polish groupoids}\}
\end{tikzcd}
\end{equation}
The left leg simply forgets about the parametrizations.
The right leg, a contravariant 2-functor, takes a parametrized theory $(\@T,X,\@M)$ to the isomorphism groupoid $\Iso_X(\@M)$; takes a parametrized interpretation $(\@F,f) : (\@T_1,X_1,\@M_1) -> (\@T_2,X_2,\@M_2)$ to the continuous functor
\begin{align*}
\yesnumber
\label{eq:interp2-functor}
(\@F,f)^* : \Iso_{X_2}(\@M_2) &--> \Iso_{X_1}(\@M_1) \\
\paren[\big]{g : (\@M_2)_x \cong (\@M_2)_y} &|--> \smash{\paren[\big]{(\@M_1)_{f(x)} = f^*(\@M_1)_x \cong \@F^{\@M_2}_x \overset{g}{\cong} \@F^{\@M_2}_y \cong f^*(\@M_1)_y = (\@M_1)_{f(y)}};}
\end{align*}
and takes a $\Sigma_1$-definable isomorphism $h : (\@F,f) \cong (\@G,g)$ to the continuous natural isomorphism
\begin{align*}
\yesnumber
\label{eq:interp2-nattrans}
h^* : X_2 &--> \Iso_{X_1}(\@M_1) \\
x &|--> \smash{\paren[\big]{(\@M_1)_{f(x)} = f^*(\@M_1)_x \cong \@F^{\@M_2}_x \overset{h_x}{\cong} \@G^{\@M_2}_x \cong g^*(\@M_1)_x = (\@M_1)_{g(x)}}}
\end{align*}
between the continuous functors $(\@F,f)^*$ and $(\@G,g)^*$ as defined above.
\end{definition}

\begin{theorem}
\label{thm:interp2}
In the above diagram \cref{eq:interp2}:
\begin{enumerate}[label=(\alph*)]
\item \label{thm:interp2:right}
The right leg is a full and faithful contravariant 2-functor, i.e., restricts to an equivalence between each hom-groupoid of parametrized interpretations $(\@T_1,X_1,\@M_1) -> (\@T_2,X_2,\@M_2)$ and the corresponding hom-groupoid of continuous functors $\Iso_{X_2}(\@M_2) -> \Iso_{X_1}(\@M_1)$.
\item \label{thm:interp2:left}
The left leg restricts to an equivalence of 2-categories between the full sub-2-category of \emph{zero-dimensional Polish} parametrized $\Pi_2$ theories (meaning the base space $X$ of the parametrization is zero-dimensional Polish, not necessarily the étale space $M$), and all countable $\Pi_2$ theories.
\end{enumerate}
Thus, we have a composite full and faithful 2-functor, i.e., ``embedding of 2-categories'',
\begin{equation}
\label{eq:interp2-equiv}
\{\text{ctbl $\Pi_2$ theories}\}
\simeq \{\text{0-d Pol parametrized $\Pi_2$ theories}\}
--> \{\text{q-Pol groupoids}\}
\end{equation}
taking each countable $\Pi_2$ theory to the isomorphism groupoid of any of its zero-dimensional Polish parametrizations with $\Sigma_1$ saturations.
\end{theorem}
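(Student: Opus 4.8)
The plan is to establish the two parts \cref{thm:interp2:right} and \cref{thm:interp2:left} separately and then obtain the embedding \cref{eq:interp2-equiv} formally, by composing the right leg with a pseudo-inverse of the (now invertible) left leg: a pseudo-inverse of a biequivalence is fully faithful, and the right leg is fully faithful by \cref{thm:interp2:right}, so their composite is fully faithful, which is exactly what \cref{eq:interp2-equiv} asserts. The substance is thus concentrated in the two legs, and the engine throughout is the Joyal--Tierney \cref{thm:jt} together with its upgrade to a full equivalence of categories in \cref{rmk:jt}.

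For \cref{thm:interp2:right} I would verify that the right leg is an equivalence on each hom-groupoid, the crux being essential surjectivity. Given a continuous functor $\Phi : \Iso_{X_2}(\@M_2) \to \Iso_{X_1}(\@M_1)$, I would set $f := \Phi_0 : X_2 \to X_1$ on objects and form the pullback étale space $f^*(\@M_1) \to X_2$, turning it into an étale $\Iso_{X_2}(\@M_2)$-space by letting a morphism $g : x \to y$ act via $\Phi(g) : (\@M_1)_{f(x)} \cong (\@M_1)_{f(y)}$; joint continuity of this action is inherited from continuity of $\Phi$ and of the canonical action of $\Iso_{X_1}(\@M_1)$ on $\@M_1$. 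Because each $\Phi(g)$ lies in $\Iso_{X_1}(\@M_1)$ and so preserves the $\@L_1$-structure, the pulled-back interpretations of the symbols of $\@L_1$ are $\Iso_{X_2}(\@M_2)$-invariant, making $f^*(\@M_1)$ an étale model of $\@T_1$ over $X_2$. Applying \cref{thm:jt} to its underlying étale $\Iso_{X_2}(\@M_2)$-space, and the categorical form \cref{rmk:jt} to the invariant relations and functions, yields a $\Sigma_1$ interpretation $\@F : \@T_1 \to \@T_2$ with $\@F^{\@M_2} \cong f^*(\@M_1)$; that $\@F$ satisfies all of $\@T_1$ follows fiberwise, since each $(\@F^{\@M_2})_x \cong (\@M_1)_{f(x)} \models \@T_1$ and $\@M_2$ realizes every model of $\@T_2$. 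Unwinding \cref{eq:interp2-functor} shows $(\@F,f)^* \cong \Phi$. Full faithfulness on $2$-cells is then a direct translation: a natural isomorphism $\eta : (\@F,f)^* \Rightarrow (\@G,g)^*$ is exactly an $\Iso_{X_2}(\@M_2)$-equivariant isomorphism $\@F^{\@M_2} \cong \@G^{\@M_2}$, which by the categorical Joyal--Tierney corresponds bijectively to a $\Sigma_1$-definable isomorphism $h : \@F \cong \@G$ with $h^* = \eta$.

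For \cref{thm:interp2:left}, essential surjectivity on objects is supplied by \cref{ex:estr-per}: every countable $\Pi_2$ theory has a quasi-Polish parametrization with $\Sigma_1$ saturations, which I would pull back along an open surjection from a zero-dimensional Polish space (via \cref{it:qpol-openquot} and \cref{thm:estr-osat-pb}) to land in the required sub-$2$-category. Full faithfulness on $2$-cells is immediate, since a parametrized definable isomorphism is by definition nothing more than its underlying definable isomorphism. The one genuinely new point is local essential surjectivity: every $\Sigma_1$ interpretation $\@F : \@T_1 \to \@T_2$ must be lifted to a parametrized interpretation $(\@F,f)$ for the given zero-dimensional Polish parametrizations. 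Here I would consider the space $\Iso_{X_1,X_2}(\@M_1,\@F^{\@M_2})$ of \cref{rmk:isogpd2}; the map $\cod$ onto $X_2$ is open by \cref{rmk:estr-osat-isogpd2} (as $\@M_1$ has $\Sigma_1$ saturations) and surjective, since each fiber $(\@F^{\@M_2})_x$ is a countable model of $\@T_1$ and hence isomorphic to some fiber of $\@M_1$. A continuous section $s$ of this open surjection simultaneously produces the classifying map $f := \dom \circ s$ and the isomorphism $f^*(\@M_1) \cong \@F^{\@M_2}$ witnessing that $(\@F,f)$ lifts $\@F$.

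The main obstacle, and the only step not reducible to bookkeeping or to the already-proven Joyal--Tierney theorem, is the existence of this continuous section. I would obtain it from the zero-dimensional selection theorem, using that $X_2$ is zero-dimensional Polish and that $x \mapsto \cod^{-1}(x)$ is lower semicontinuous (by openness of $\cod$) with closed values in $\Iso_{X_1,X_2}(\@M_1,\@F^{\@M_2})$; when the latter space fails to be regular I would instead run the standard clopen-tree construction of the section directly, refining a clopen partition of $X_2$ while choosing compatible basic open sets of the total space, and exploiting the complete basis of a quasi-Polish space to force these to shrink to a genuine section. This selection-theoretic step is the analytic heart of \cref{thm:interp2:left}; everything else is formal once \cref{thm:jt} and \cref{rmk:jt} are in hand.
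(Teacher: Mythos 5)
Your proposal is correct and follows essentially the same route as the paper: part (a) is the chain of equivalences through \cref{rmk:jt} (parametrized interpretations $\leftrightarrow$ models of $\@T_1$ in $\Sigma_1$ imaginaries over $\@T_2$ $\leftrightarrow$ étale models of $\@T_1$ in étale $\Iso_{X_2}(\@M_2)$-spaces $\leftrightarrow$ continuous functors extending some $f$), and part (b) reduces local essential surjectivity to a continuous selection for the open surjection $\cod : \Iso_{X_1,X_2}(\@M_1,\@F^{\@M_2}) \to X_2$. The paper obtains that selection by citing Michael's selection theorem for zero-dimensional domains (with the same caveat you raise about the total space being only quasi-Polish, handled via \cite{dBPSovert} or \cref{it:qpol-openquot}), which is exactly the clopen-refinement construction you describe.
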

\begin{proof}
\cref{thm:interp2:right} follows from the Joyal--Tierney theorem.
Indeed, for a parametrized interpretation $(\@F,f) : (\@T_1,X_1,\@M_1) -> (\@T_2,X_2,\@M_2)$,
$\@F$ is a model of $\@T_1$ in the category of $\Sigma_1$ imaginaries over $\@T_2$,
which by Joyal--Tierney in the form of \cref{rmk:jt} is equivalently a model $\@F^{\@M_2}$ in the category of second-countable étale $\Iso_{X_2}(\@M_2)$-spaces,
i.e., an étale model of $\@T_1$ over $X_2$ equipped with a continuous action of $\Iso_{X_2}(\@M_2)$ via isomorphisms;
this action corresponds, via the isomorphism $f^*(\@M_1) \cong \@F^{M_2}$, to the extension of $f : X_2 -> X_1$ to a functor $\Iso_{X_2}(\@M_2) -> \Iso_{X_1}(\@M_1)$ via \cref{eq:interp2-functor}.
It is straightforward to check that this correspondence is functorial via \cref{eq:interp2-nattrans}.

For \cref{thm:interp2:left}, the left leg is essentially surjective, i.e., every countable $\Pi_2$ theory has a zero-dimensional Polish parametrization, by \cref{ex:estr-per}; and it is locally full and faithful, i.e., bijective on 2-cells between each fixed pair of morphisms, by definition of said 2-cells as $\Sigma_1$-definable isomorphisms.
It remains only to check that it is locally (essentially) surjective, i.e., for two zero-dimensional Polish parametrized theories $(\@T_1,X_1,\@M_1), (\@T_2,X_2,\@M_2)$, every $\Sigma_1$ interpretation $\@F : \@T_1 -> \@T_2$ can be parametrized via some continuous map $f : X_2 -> X_1$ and isomorphism $f^*(\@M_1) \cong \@F^{\@M_2}$.
That is, for each fiber $(\@M_2)_x$ of $\@M_2$, we know the model $\@F^{(\@M_2)_x} |= \@T_1$ is isomorphic to some fiber $(\@M_1)_{f(x)}$ of $\@M_1$; we need to find the fiber $f(x)$ and the isomorphism in a continuous manner.
Form the space
\begin{equation*}
\Iso_{X_1,X_2}(\@M_1,\@F^{\@M_2}) = \set[\big]{(y,x,g)}{y \in X_1 \AND x \in X_2 \AND g : (\@M_1)_x \cong \@F^{(\@M_2)_x}}
\end{equation*}
as in \cref{rmk:isogpd2}.
By \cref{rmk:estr-osat-isogpd2}, the second projection $\cod : \Iso_{X_1,X_2}(\@M_1,\@F^{\@M_2}) -> X_2$ is open, and it is also surjective, since as noted before, each $(\@M_2)_x$ is isomorphic to some $(\@M_1)_y$.
Thus by Michael's selection theorem \cite[1.4]{Mselect2} (see also \cite{Mselect0}, \cite{dBPSovert}; to deal with the fact that $\Iso_{X_1,X_2}(\@M_1,\@F^{\@M_2})$ is quasi-Polish instead of Polish, use the latter paper or \cref{it:qpol-openquot}), $\cod$ has a continuous section $X_2 -> \Iso_{X_1,X_2}(\@M_1,\@F^{\@M_2})$, whose first coordinate yields $f$ and third coordinate yields the isomorphism $f^*(\@M_1) \cong \@F^{\@M_2}$.
\end{proof}

\begin{remark}
\label{rmk:interp2}
If one is only interested in certain restricted kinds of theories, then it suffices in \cref{thm:interp2} to restrict to a full sub-2-category of zero-dimensional Polish parametrizations which are known to parametrize all such theories.

For example, if one is only interested in theories with no finite models, and modulo which negated atomic formulas are equivalent to $\Sigma_1$ formulas (e.g., from Morleyizing to recover the traditional definition of $\Sigma_1$ as in \cref{ex:morley-neg}), then it suffices to consider the usual Polish space of models on $\#N$ as in \cref{ex:estr-infdec}, thereby recovering the boldface version of \cite[1.5]{HMMMcomp}.

Likewise, if one wants to keep track of positive atomic formulas other than $=$, then applying \cref{thm:interp2} to the parametrization of \cref{ex:estr-findecs} recovers the boldface version of \cite[3.3]{CMRpos}.
(In fact, as long as $\ne$ is $\Sigma_1$, one may replace ``quasi-Polish groupoids'' with ``zero-dimensional Polish groupoids'' in \cref{eq:interp2-equiv}, by \cref{rmk:estr-findecs-0d,rmk:isogpd-zpol}.)
\end{remark}

\section{The Lopez-Escobar theorem and $\mathcal{L}_{\omega_1\omega}$ imaginaries}

An important tool in descriptive set theory is the Baire category quantifier $\exists^*$ (and its dual $\forall^*$); see \cite[\S8.J, 22.22]{Kcdst}, \cite[\S A]{MTrep}, \cite[\S2.3--4]{Cbk}.
Given a continuous open map $f : X -> Y$ between quasi-Polish spaces, each $A \in X$ has a ``Baire-categorical image'' under $f$:
\begin{equation*}
\exists^*_f(A) := \set[\big]{y \in Y}{A \text{ is nonmeager in the fiber } f^{-1}(y)}.
\end{equation*}
The usefulness of $\exists^*_f$ is largely because it, unlike ordinary image, preserves Borel sets; in fact, it preserves $\*\Sigma^0_\alpha$ sets for all $\alpha$.

We have an analogous result for étale structures $\@M -> X$ with $\Sigma_1$ saturations, thought of as ``maps $x |-> \@M_x : X -> \{\text{all structures}\}$'' as in \cref{rmk:estr-osat-cts}.
The ``fibers'' of such a map should be thought of as the isomorphism orbits $\Iso_X(\@M) \cdot \vec{a}$ of each $\vec{a} \in M^n_X$ (and not the fiber structures $\@M_x$, which are the ``values'' of the map $x |-> \@M_x$).
Thus, ``image'' becomes ``saturation''.

\begin{definition}
\label{def:vaught}
For an étale structure $p : \@M -> X$, open $U \subseteq \Iso_X(\@M)$, and any $A \subseteq M^n_X$, the \defn{Vaught transform} is the ``Baire-categorical saturation''
\begin{equation*}
U * A := \set[\big]{\vec{a} \in M^n_x}{\exists \text{ nonmeagerly many } g \in U \cap \cod^{-1}(x) \text{ s.t.\ } (\vec{a} \in g \cdot A)}.
\end{equation*}
(Here $\cod : \Iso_X(\@M) -> X$ is the codomain map; recall \cref{def:isogpd}.)
\end{definition}

The more common notation for $U * A$ is $A^{\triangle U^{-1}}$, including in \cite{Lgpd} where it was first studied for groupoids; the above notation suggestive of the ordinary saturation $U \cdot A$ is from \cite[4.2.1]{Cbk}.

\begin{theorem}[Lopez-Escobar for étale structures]
\label{thm:lopez-escobar}
Let $\@L$ be a countable language, $p : \@M -> X$ be a second-countable étale $\@L$-structure with $\Sigma_1$ saturations over a quasi-Polish $X$.
For any $\*\Sigma^0_\alpha$ set $A \subseteq M^n_X$, there is a $\Sigma_\alpha$ formula defining $\Iso_X(\@M) * A$.
\end{theorem}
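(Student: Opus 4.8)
The plan is to induct on $\alpha$, reducing everything to the behaviour of the Vaught transform on basic open subsets of $\Iso_X(\@M)$, and then reassembling using that $\Iso_X(\@M) * A$ is isomorphism-invariant (so that it is at least a candidate to be $\phi^\@M$ for some formula $\phi$, the point being that $\Sigma_1$ saturations is exactly what makes invariant open sets definable). Since the expression $U * A$ preserves unions in the variable $U$, and $\Iso_X(\@M)$ is a countable union of basic open sets, it suffices to understand each $U * A$ for $U = \brbr{S |-> T}$ with $S, T \subseteq M^k_X$ open sections, and then take a countable union.

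The base case $\alpha = 1$ is where $\Sigma_1$ saturations enters: for open $A$, the set of $g \in \cod^{-1}(x)$ with $\vec a \in g \cdot A$ is open in the fibre $\cod^{-1}(x)$ (by joint continuity of the action), which is quasi-Polish by \cref{thm:isogpd-qpol} hence Baire, so nonmeagerness collapses to nonemptiness and $\Iso_X(\@M) * A = \Iso_X(\@M) \cdot A$, which is $\Sigma_1$-definable by \cref{def:estr-osat}. The engine of the inductive step will be a \emph{section-parameter reduction}: for sections $S, T$ one checks directly from the definitions that $\vec a \in \brbr{S |-> T} * A$ iff $(\vec a, T_x) \in \Iso_X(\@M) * (A \times_X S)$, where $A \times_X S \subseteq M^{n+k}_X$ has the same Borel class as $A$. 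Thus a local transform is an invariant (full) transform of a fibre-product set, read off at the parameter $T_x$; and, crucially, letting $T$ range over a basis of sections sweeps $T_x$ over the entire fibre, converting the union $\bigcup_T$ into an existential quantifier $\exists \vec z$ over the trailing coordinates. This is the origin of the existential in the $\Sigma_\alpha$ normal form, and it lets the induction hypothesis, applied at arity $n+k$, feed directly into a formula.

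For the inductive step I would treat a Selivanov generator $A_i \setminus B_i = A_i \cap \neg B_i$ with $A_i, B_i \in \*\Sigma^0_\beta$, $\beta < \alpha$, handling the outer countable union by commutation of $*$ with unions. Here I localize by Baire category: $\vec a \in \Iso_X(\@M) * (A_i \cap \neg B_i)$ iff the set of witnessing $g$ is comeager in some basic $U$, which splits into a comeager condition on the $A_i$-part and a meagerness condition on the $B_i$-part. The meagerness condition is the complement of $U * B_i$, handled by the section-parameter reduction together with the induction hypothesis for $*$; the comeager condition is the dual (comeager) Vaught transform, which I would keep applied only to $\Pi$-type sets — its ``good'' direction — so that it lands in $\Pi_\beta$. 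The two transforms are interchanged by the full-group complement identity, and the Baire property of all the relevant subsets of $\cod^{-1}(x)$ (they are continuous preimages of Borel sets) legitimises the localization.

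The main obstacle is the exact complexity bookkeeping in this last step. One must arrange the simultaneous induction — the nonmeager transform $*$ sending $\*\Sigma^0_\beta$ into $\Sigma_\beta$, and the comeager transform sending $\*\Pi^0_\beta$ into $\Pi_\beta$, each invoked only in its good direction — so that the single existential produced by localization and the single negation produced by the set difference combine into exactly one $\Sigma_\alpha$ layer of the shape $\bigvee_S \exists \vec z\,(\phi_S \wedge \neg \psi_S)$ with $\phi_S, \psi_S \in \Sigma_\beta$, with no off-by-one inflation of the level. This is precisely the delicate point in the classical Lopez--Escobar argument (cf.\ \cite[16.9]{Kcdst}): keeping the recursion from silently replacing $\Sigma_\beta$ by $\Sigma_{\beta+1}$ at the successor step is where the real care lies, and the étale formulation concentrates that care into the section-parameter reduction above.
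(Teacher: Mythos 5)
Your outline is essentially the paper's proof: induction on $\alpha$ through the Selivanov hierarchy, base case from $\Sigma_1$ saturations, commutation of $*$ with countable unions, and above all your ``section-parameter reduction'', which is verbatim the paper's \cref{rmk:lopez-escobar} (expressing $\brbr{U |-> V} * A$ via $\Iso_X(\@M) * (A \times_X U)$ evaluated at a parameter in $V_x$, with the union over basis sections $V$ reassembling into a single $\exists \vec{y}$). Your predicted normal form $\bigvee_U \exists \vec{y}\,(\phi_U \wedge \neg\psi_U)$ with $\phi_U, \psi_U \in \Sigma_\beta$ is exactly the formula the paper writes down.

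The one step where your plan, as stated, does not typecheck is the split of ``$\{g \mid \vec{a} \in g(A_i \setminus B_i)\}$ is nonmeager''. You ask for a \emph{comeager} condition on the $A_i$-part and propose to handle it with the dual (comeager) transform ``applied only to $\Pi$-type sets'' --- but $A_i$ is $\*\Sigma^0_\beta$, so that is precisely not the dual transform's good direction, and the comeager transform of a $\*\Sigma^0_\beta$ set is not controlled by your induction. The resolution (and what the paper does) is to use the \emph{nonmeager} condition on the $A_i$-part: by the property of Baire, the difference is nonmeager iff there is a basic $\brbr{U |-> V}$ in which the $A_i$-part is nonmeager and the $B_i$-part is meager, so both conjuncts are instances of the single nonmeager transform applied to $\*\Sigma^0_\beta$ sets, giving $\phi_U \in \Sigma_\beta$ and $\neg\psi_U$ with $\psi_U \in \Sigma_\beta$. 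No genuinely simultaneous two-sided induction is needed --- the ``comeager transform of $\*\Pi^0_\beta$ sets'' statement is just the complement of the one induction hypothesis --- and with this correction the single negation and single existential land in one $\Sigma_{\beta+1} \subseteq \Sigma_\alpha$ layer with no off-by-one, exactly as you hoped.
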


\begin{remark}
\label{rmk:lopez-escobar}
It follows from this statement that more generally, for any basic open $\brbr{U |-> V} \subseteq \Iso_X(\@M)$, where $U, V \subseteq M^m_X$ are open,
\begin{align*}
\brbr{U |-> V} * A
={}& \set[\big]{\vec{a} \in M^n_x}{\{g \in \brbr{U |-> V} \mid \vec{a} \in g \cdot A\} \text{ is nonmeager in } \cod^{-1}(x)} \\
={}& \set[\big]{\vec{a} \in M^n_x}{\exists \vec{b} \in V_x\, (\{g \mid (\vec{a},\vec{b}) \in g \cdot (A \times_X U)\} \text{ is nonmeager in } \cod^{-1}(x))} \\
={}& \set[\big]{\vec{a} \in M^n_x}{\exists \vec{b} \in V_x\, ((\vec{a},\vec{b}) \in \Iso_X(\@M) * (A \times_X U))}
\end{align*}
is the fiberwise inverse image of $V$ under a binary relation $\phi^\@M \subseteq M^n_X \times_X M^m_X$, for some $\Sigma_\alpha$ formula $\phi$ with $n+m$ variables depending only on $U$.
\end{remark}

\begin{proof}[Proof of \cref{thm:lopez-escobar}]
By induction on $\alpha$.
For $\alpha = 1$, this is just the fact that $\@M$ has $\Sigma_1$ saturations.
For a countable union of sets for which the result holds, we may take the disjunction of the formulas.
By \cref{def:borel} of the Borel hierarchy, it thus remains to show, assuming the result holds for all $\*\Sigma^0_\alpha$ sets, that for any two $A, B \in \*\Sigma^0_\alpha(M^n_X)$, the result holds for $A \setminus B \in \*\Sigma^0_{\alpha+1}(M^n_X)$.
Fix countable bases $\@U_m$ for each $M^m_X$, so that $\brbr{U |-> V}$ for $U, V \in \@U_m$ form a basis for $\Iso_X(\@M)$.
For $\vec{a} \in M^n_x$,
\begin{align*}
\vec{a} \in \Iso_X(\@M) * (A \setminus B)
&\iff  \{g \mid \vec{a} \in g(A \setminus B)\} \text{ is nonmeager in } \cod^{-1}(x) \\
\intertext{which by the property of Baire is}
&\iff  \exists m\, \exists U, V \in \@U_m\, \paren*{
    \begin{aligned}
    &\{g \in \brbr{U |-> V} \mid \vec{a} \in gA\} \text{ nonmeager}, \\
    &\{g \in \brbr{U |-> V} \mid \vec{a} \in gB\} \text{ meager}
    \end{aligned}
} \\
&\iff  \exists m\, \exists U, V \in \@U_m\, \paren[\big]{\vec{a} \in (\brbr{U |-> V} * A) \setminus (\brbr{U |-> V} * B)}; \\
\intertext{
by the induction hypothesis and preceding remark, there are $\Sigma_\alpha$ formulas $\phi_U, \psi_U$ such that this is}
&\iff  \exists m\, \exists U, V \in \@U_m\, \paren[\big]{(\exists \vec{b} \in V_x)\, \phi_U^\@M(\vec{a},\vec{b}) \wedge \neg (\exists \vec{c} \in V_x)\, \psi_U^\@M(\vec{a},\vec{c})} \\
&\iff  \exists m\, \exists U \in \@U_m\, \exists \vec{b} \in M^m_x\, \paren[\big]{\phi_U^\@M(\vec{a},\vec{b}) \wedge \neg \psi_U^\@M(\vec{a},\vec{b})}
\end{align*}
where $\Longleftarrow$ is because $\vec{b}$ belongs to some open section $V \in \@U_m$.
So this is defined by the $\Sigma_{\alpha+1}$ formula
\begin{align*}
\bigvee_m \bigvee_{U \in \@U_m} \exists y_0, \dotsc, y_{m-1}\, \paren[\big]{\phi_U(\vec{x},\vec{y}) \wedge \neg \psi_U(\vec{x},\vec{y})}.
&\qedhere
\end{align*}
\end{proof}

\begin{example}
Applying \cref{thm:lopez-escobar} to the standard Polish space of models on $\#N$ (\cref{ex:estr-infdec}) recovers the classical Lopez-Escobar theorem \cite{Llo1o}, or rather its strengthening adapted levelwise to the Borel hierarchy by Vaught \cite{Vaught} (see also \cite[16.8]{Kcdst}, \cite[11.3.6]{Gidst}).

Applying it instead to the parametrization of \cref{ex:estr-findecs} yields the version of the Lopez-Escobar theorem used in \cite{Cscc}, which is the boldface version of the effective ``positive'' (but still admitting $\ne$ as $\Sigma_1$) Lopez-Escobar theorem in \cite{BFRSVle}.
\end{example}

We now have the Borel analogs of the material from the two preceding sections:

\begin{definition}
For a second-countable étale structure $p : \@M -> X$ over quasi-Polish $X$, a \defn{(fiberwise) countable Borel $\Iso_X(\@M)$-space over $X$} is a standard Borel space $A$ equipped with a countable-to-1 Borel map $q : A -> X$ and a Borel action of $\Iso_X(\@M)$.
\end{definition}

\begin{definition}
For a countable $\@L_{\omega_1\omega}$ theory $\@T$, a \defn{$\@L_{\omega_1\omega}$ imaginary sort $\Phi = (\bigsqcup_i \phi_i)/(\bigsqcup_{i,j} \epsilon_{ij})$ over $\@T$} is defined exactly as in \cref{def:imag}, except that the formulas $\phi_i, \epsilon_{ij}$ may be $\@L_{\omega_1\omega}$ instead of $\Sigma_1$.
These may be interpreted in a second-countable étale $p : \@M -> X$ over quasi-Polish $X$ to yield a countable Borel $\Iso_X(\@M)$-space $\Phi^\@M -> X$.
\end{definition}

\begin{theorem}
\label{thm:scc}
Let $\@T$ be a countable $\@L_{\omega_1\omega}$ theory, $p : \@M -> X$ be a second-countable étale space with $\Sigma_1$ saturations parametrizing models of $\@T$.
Then every countable Borel $\Iso_X(\@M)$-space $q : A -> X$ is isomorphic to $\Phi^\@M$ for some $\@L_{\omega_1\omega}$ imaginary $\Phi$ over $\@T$.
\end{theorem}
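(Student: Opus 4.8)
The plan is to reduce \cref{thm:scc} to its topological counterpart, the Joyal--Tierney \cref{thm:jt}, by first upgrading the given Borel action on $A$ to a jointly continuous étale action, at the cost of refining the étale structure $\@M$ by a Morleyization. The guiding principle is the analogy of \cref{ex:prop} and \cref{def:estr-morley}: passing from the Borel to the topological setting corresponds to changing the topology on the ``space of structures'', i.e.\ Morleyizing, after which an $\@L_{\omega_1\omega}$ formula over the original language becomes $\Sigma_1$ over the expanded one. This is exactly the mechanism by which $\@L_{\omega_1\omega}$ (rather than $\Sigma_1$) enters the conclusion.

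First I would invoke a Becker--Kechris-type topological realization theorem for groupoid actions \cite{Cbk}. The groupoid $\Iso_X(\@M)$ is open by \cref{thm:estr-osat-isogpd} and quasi-Polish by \cref{thm:isogpd-qpol}, and it acts in a fiberwise countable Borel fashion on $A$. The realization produces, simultaneously and compatibly, a finer second-countable topology on $A$ together with the adjunction to the topology of $\@M$ of countably many isomorphism-invariant Borel (hence $\@L_{\omega_1\omega}$-definable, by \cref{thm:lopez-escobar}) sets — that is, a Morleyization $\@M' \to X'$ over a countable fragment $\@F$ as in \cref{def:estr-morley} — such that $q : A \to X'$ becomes a local homeomorphism and the action of $\Iso_{X'}(\@M')$ on $A$ is jointly continuous. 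Three points make this usable: $\Iso_{X'}(\@M') = \Iso_X(\@M)$ as an \emph{abstract} groupoid, since the adjoined relations are automatically preserved by every isomorphism of fibers, so only the topology on the groupoid is refined; $X'$ and $A$ remain quasi-Polish by \cref{it:qpol-dis}; and $\@M'$ still has $\Sigma_1$ saturations by \cref{thm:estr-osat-morley}, because the plain $\Sigma_1$ saturations of $\@M$ are in particular ``$\Sigma_1$ over $\@F$''.

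With $A$ now a second-countable étale $\Iso_{X'}(\@M')$-space over the quasi-Polish base $X'$, I would apply \cref{thm:jt} directly to $\@M'$, obtaining $A \cong \Phi'^{\@M'}$ for some $\Sigma_1$ imaginary $\Phi' = (\bigsqcup_i \phi'_i)/(\bigsqcup_{i,j} \epsilon'_{ij})$ over the Morleyized theory $\@T'$. I would then translate back: by the correspondence of \cref{def:morley}, each $\Sigma_1$ $\@L'$-formula $\phi'_i, \epsilon'_{ij}$ is, modulo $\@T'$, an $\@L_{\omega_1\omega}$ formula ``$\Sigma_1$ over $\@F$'' in the original language $\@L$, and since $\@M'$ and $\@M$ share the same fibers with the same interpretations of $\@L$-symbols, the interpretations of corresponding formulas agree. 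Reassembling these into an $\@L_{\omega_1\omega}$ imaginary $\Phi$ over $\@T$ yields $\Phi^\@M = \Phi'^{\@M'} \cong A$, completing the argument.

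The main obstacle is the realization step of the second paragraph, which carries all the weight of crossing between the Borel and topological worlds: one must manufacture the Morleyizing fragment $\@F$ and the refined topology on $A$ \emph{together}, so that the open sections of $A$ in the new topology interact continuously with the sections of $\@M'$ — and it is precisely this continuity requirement that forces the relevant invariant Borel sets to be adjoined to $\@M$. (That this generally cannot be arranged with $\@M$ left untouched is what accounts for $\@L_{\omega_1\omega}$ appearing in place of $\Sigma_1$: when $X$ is a single point, the fiberwise stabilizers of any countable Borel action are automatically open, no Morleyization is needed, and one recovers an honest $\Sigma_1$ imaginary; for general $X$ the orbits vary over the base only Borel-measurably.) Verifying that the realization stays within the quasi-Polish category while preserving $\Sigma_1$ saturations is the technical heart, and is exactly where \cite{Cbk} is indispensable.
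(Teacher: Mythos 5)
Your proposal is correct and follows essentially the same route as the paper: topologically realize the Borel action via the Becker--Kechris-type theorem of \cite{Cbk} (the paper cites its 4.5.13 and 4.3.9, which refine the topology on $X$ by adjoining Vaught transforms $\brbr{U_i\mapsto V_i}*B_i$), identify the adjoined sets as $p(U_i\cap\phi_i^{\@M})$ for $\@L_{\omega_1\omega}$ formulas $\phi_i$ via \cref{rmk:lopez-escobar}, Morleyize to get $\@M'\to X'$ with $\Sigma_1$ saturations by \cref{thm:estr-osat-morley}, apply \cref{thm:jt}, and translate the resulting $\Sigma_1$ imaginary over $\@T'$ back to an $\@L_{\omega_1\omega}$ imaginary over $\@T$. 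The only cosmetic difference is that you describe the adjoined data as invariant definable subsets of the fiber powers $M^n_X$ inducing the Morleyization, whereas the paper works directly with the (non-invariant) Vaught-transform subsets of $X$; these amount to the same refinement.
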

This was proved in \cite{Cscc} for a particular parametrization $\@M$, namely that of \cref{ex:estr-findecs} (and implicitly in \cite{HMMborel} for the standard parametrization of \cref{ex:estr-infdec}).
\begin{proof}
By \cite[proofs of 4.5.13 and 4.3.9]{Cbk}, we may topologically realize $A$ as a second-countable étale $\Iso_X(\@M)$-space, after refining the topology on the space of objects $X$ of the groupoid $\Iso_X(\@M)$ by adjoining countably many sets of the form $\brbr{U_i |-> V_i} * B_i$ to the topology, where $U_i, V_i \subseteq M^{m_i}_X$ are open and $B_i \subseteq X$ are Borel.
By \cref{rmk:lopez-escobar}, each
$\brbr{U_i |-> V_i} * B_i = p(U_i \cap \phi_i^\@M)$
for some $\@L_{\omega_1\omega}$ formula $\phi_i(x_0,\dotsc,x_{m_i-1})$.
Morleyize the formulas $\phi_i$ to obtain a new theory $\@T'$ in an expanded language $\@L' \supseteq \@L$, and then Morleyize the étale structure $\@M$ via \cref{def:estr-morley}, to obtain a new étale structure $p' : \@M' -> X'$, now with $\Sigma_1$ saturations by \cref{thm:estr-osat-morley}, such that $X'$ is $X$ with a finer topology in which each $p(U_i \cap \phi_i^\@M)$ becomes open.
Pulling back the topologically realized étale space $A$ to $X'$, we thus obtain a second-countable étale $\Iso_{X'}(\@M')$-space, which by the Joyal--Tierney \cref{thm:jt} is named by some $\Sigma_1$ imaginary in $\@M'$, hence by an $\@L_{\omega_1\omega}$ imaginary in $\@M$.
\end{proof}

\begin{remark}
\label{rmk:scc}
As in \cref{rmk:jt}, it follows that we in fact have an equivalence of categories
\begin{align*}
\{\text{$\@L_{\omega_1\omega}$ imaginaries over $\@T$}\} &\overset{\sim}{-->} \{\text{countable Borel $\Iso_X(\@M)$-spaces}\} \\
\Phi &|--> \Phi^\@M
\end{align*}
where the subobjects on the left, namely $\@L_{\omega_1\omega}$-definable subsorts of $\@L_{\omega_1\omega}$ imaginaries, correspond to $\Iso_X(\@M)$-invariant Borel subspaces by the Lopez-Escobar \cref{thm:lopez-escobar}.
\end{remark}

This recovers one of the main results of \cite{Cscc} (extending the boldface result of \cite{HMMborel}).
To recover the rest, we need to convert \cref{thm:scc} into the 2-categorical form of \cref{thm:interp2}.

\begin{definition}
\label{def:interp-borel}
An \defn{$\@L_{\omega_1\omega}$ interpretation} between two $\@L_{\omega_1\omega}$ theories is defined exactly as in \cref{def:interp}, except that the formulas and imaginaries used may be $\@L_{\omega_1\omega}$ rather than $\Sigma_1$.
\end{definition}

\begin{definition}
\label{thm:interp-param-borel}
Recall the notion of \emph{quasi-Polish parametrized $\Pi_2$ theory} $(\@L, \@T, X, \@M, p)$ from \cref{def:interp-param}.
A \defn{quasi-Polish parametrized $\@L_{\omega_1\omega}$ theory} will mean such a tuple where $\@T$ is a countable $\@L_{\omega_1\omega}$ theory in the countable language $\@L$, and $p : \@M -> X$ is a second-countable étale structure over the quasi-Polish space $X$ with $\@L_{\omega_1\omega}$ saturations of open sets.

A \defn{parametrized $\@L_{\omega_1\omega}$ interpretation} $(\@F,f) : (\@L_1,\@T_1,X_1,\@M_1,p_1) -> (\@L_2,\@T_2,X_2,\@M_2,p_2)$ between two such parametrized theories consists of an $\@L_{\omega_1\omega}$ interpretation $\@F : \@T_1 -> \@T_2$, a \emph{Borel} map $f : X_2 -> X_1$, and a \emph{Borel} isomorphism of fiberwise countable \emph{Borel} structures $f^*(\@M_1) \cong \@F^{\@M_2}$.
\end{definition}

We have the Borel analog of the diagram \cref{eq:interp2}:
\begin{equation}
\label{eq:interp2-borel}
\begin{tikzcd}[column sep=0pt]
& \{\text{q-Pol parametrized $\@L_{\omega_1\omega}$ theories}\}
    \dlar["\simeq"']
    \drar[hook] \\
\{\text{ctbl $\@L_{\omega_1\omega}$ theories}\} &&
\{\text{quasi-Polish groupoids}\}
\end{tikzcd}
\end{equation}
in which the three 2-categories and both 2-functors are as before (see \cref{eq:interp2-functor} and \cref{eq:interp2-nattrans}), but with $\@L_{\omega_1\omega}$ interpretations and definable isomorphisms and \emph{Borel} functors and natural transformations.

\begin{theorem}
\label{thm:interp2-borel}
In the above diagram:
\begin{enumerate}[label=(\alph*)]
\item \label{thm:interp2-borel:right}
The right leg is a full and faithful 2-functor.
\item \label{thm:interp2-borel:left}
The left leg is an equivalence of 2-categories (on its entire domain).
\end{enumerate}
Thus, we have a composite full and faithful 2-functor
$\{\text{ctbl $\@L_{\omega_1\omega}$ theories}\}
-> \{\text{quasi-Polish groupoids}\}$.
\end{theorem}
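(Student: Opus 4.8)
The plan is to transcribe the proof of the topological \cref{thm:interp2}, replacing Joyal--Tierney in its topological form (\cref{rmk:jt}) by its Borel counterpart (\cref{rmk:scc}), and Michael's \emph{continuous} selection by a Borel selection obtained after a change of topology. One preliminary observation streamlines both legs: Morleyizing a parametrization as in \cref{thm:estr-osat-morley} refines the topology on the base $X$ by countably many Borel sets and so leaves the Borel structure of $\Iso_X(\@M)$ unchanged; since the right leg records only \emph{Borel} functors and transformations, we may freely Morleyize and thereby assume whenever convenient that a parametrization has $\Sigma_1$ rather than merely $\@L_{\omega_1\omega}$ saturations.

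For \cref{thm:interp2-borel:right} I would argue exactly as in \cref{thm:interp2:right}. After reducing to $\Sigma_1$ saturations, a parametrized $\@L_{\omega_1\omega}$ interpretation $(\@F,f) : (\@T_1,X_1,\@M_1) \to (\@T_2,X_2,\@M_2)$ amounts to an $\@L_{\omega_1\omega}$ interpretation $\@F$, i.e.\ a model of $\@T_1$ in the $\@L_{\omega_1\omega}$ imaginaries over $\@T_2$, together with the gluing datum $f^*(\@M_1) \cong \@F^{\@M_2}$. By the Borel Joyal--Tierney equivalence \cref{rmk:scc} the former is the same as a fiberwise countable Borel model of $\@T_1$ over $X_2$ carrying a Borel $\Iso_{X_2}(\@M_2)$-action; via the supplied isomorphism this action is exactly the extension of $f$ to the Borel functor $\Iso_{X_2}(\@M_2) \to \Iso_{X_1}(\@M_1)$ of \cref{eq:interp2-functor}, and definable isomorphisms correspond to the natural transformations of \cref{eq:interp2-nattrans}. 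Hence the right leg restricts to an equivalence on each hom-groupoid, i.e.\ is full and faithful.

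For \cref{thm:interp2-borel:left} I would check essential surjectivity on objects, local full faithfulness, and local essential surjectivity. Essential surjectivity holds since any countable $\@L_{\omega_1\omega}$ theory Morleyizes to a $\Pi_2$ theory, which by \cref{ex:estr-per} has a quasi-Polish parametrization with $\Sigma_1 \subseteq \@L_{\omega_1\omega}$ saturations. Local full faithfulness on 2-cells is immediate, a 2-cell on either side being by definition an $\@L_{\omega_1\omega}$-definable isomorphism of underlying interpretations. The substance is local essential surjectivity: given objects $(\@T_1,X_1,\@M_1), (\@T_2,X_2,\@M_2)$ and an $\@L_{\omega_1\omega}$ interpretation $\@F : \@T_1 \to \@T_2$, I must produce a Borel $f : X_2 \to X_1$ and a Borel isomorphism $f^*(\@M_1) \cong \@F^{\@M_2}$. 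Mimicking \cref{thm:interp2:left} at the Borel level, I would first Morleyize $\@M_1$ by \cref{thm:estr-osat-morley} to secure $\Sigma_1$ saturations over a finer quasi-Polish topology on $X_1$ (with the same Borel structure and the same isomorphisms), and use the realization theorem \cite[proofs of 4.5.13 and 4.3.9]{Cbk} to refine the topology on $X_2$ so that $\@F^{\@M_2}$ becomes a second-countable étale structure, neither refinement altering the ambient Borel structures. Then $\Iso_{X_1,X_2}(\@M_1,\@F^{\@M_2})$ (\cref{rmk:isogpd2}) is quasi-Polish with codomain projection to $X_2$ that is continuous, open by \cref{rmk:estr-osat-isogpd2} (using the $\Sigma_1$ saturations of $\@M_1$), and surjective (each $\@F^{(\@M_2)_x} |= \@T_1$ being isomorphic to a fiber of $\@M_1$); Michael's selection theorem \cite[1.4]{Mselect2}, applied through \cref{it:qpol-openquot} exactly as in \cref{thm:interp2:left}, then yields a continuous, hence Borel, section whose first and third coordinates supply the desired $f$ and isomorphism.

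I expect this last step to be the main obstacle. In the $\Sigma_1$ setting of \cref{thm:interp2:left} the codomain projection was open on the nose, yet a continuous section could be extracted only over \emph{zero-dimensional Polish} bases, forcing the restriction there to that sub-2-category. Here I must instead \emph{manufacture} both the $\Sigma_1$ saturations (by Morleyizing $\@M_1$) and the étale structure on $\@F^{\@M_2}$ (by the Becker--Kechris-type realization of \cite{Cbk}), trading honest continuity for a change of topology; the payoff of working in the Borel category---where such refinements cost nothing, only the Borel structure being remembered---is precisely that no hypothesis on the base space survives, so the left leg is an equivalence on its \emph{entire} domain.
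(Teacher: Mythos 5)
Your treatment of part \cref{thm:interp2-borel:right}, of essential surjectivity, and of local full faithfulness matches the paper's proof (including the preliminary observation that Morleyization only refines topologies by Borel sets, which the paper leaves implicit). Where you genuinely diverge is the local essential surjectivity (fullness) step: the paper does \emph{not} reduce to the continuous case at all. It observes that surjectivity of the \emph{Borel} map $\cod : \Iso_{X_1,X_2}(\@M_1,\@F^{\@M_2}) \to X_2$ already yields a Borel section, by Kechris's large section uniformization theorem \cite[18.6$^*$]{Kcdst*} applied to the meager ideal (see \cite[7.9]{Cqpol}). That argument needs no topological realization of $\@F^{\@M_2}$, no second Morleyization of $\@M_1$, and no selection theorem; what it buys is exactly the absence of any hypothesis on the base spaces. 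Your route, by contrast, front-loads the work of \cref{thm:scc} to manufacture a continuous-selection situation.

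As written, your final step has a concrete gap: Michael's selection theorem \cite[1.4]{Mselect2} requires the \emph{domain} of the desired section to be zero-dimensional paracompact. That is precisely why part \cref{thm:interp2:left} of \cref{thm:interp2} had to be restricted to \emph{zero-dimensional Polish} parametrizations; the parenthetical appeal to \cref{it:qpol-openquot} there only repairs the codomain $\Iso_{X_1,X_2}(\@M_1,\@F^{\@M_2})$ being quasi-Polish rather than Polish, not the domain. After your Morleyization of $\@M_1$ and Becker--Kechris-type realization of $\@F^{\@M_2}$, the refined $X_2$ is merely quasi-Polish, so ``continuous, hence Borel, section'' is not justified --- open continuous surjections onto non-zero-dimensional spaces need not admit continuous sections. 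The gap is fixable: further refine $X_2$ to a zero-dimensional Polish topology via \cref{it:qpol-sbor}, which preserves the Borel structure, keeps $\@M_2$ and the realized $\@F^{\@M_2}$ étale, and keeps $\cod$ open (openness being witnessed by the $\Sigma_1$ saturations of the Morleyized $\@M_1$). But you must either carry out this extra refinement explicitly or switch to the paper's uniformization argument, which sidesteps the issue entirely.
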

\begin{proof}
As in \cref{thm:interp2}, \cref{thm:interp2-borel:right} follows from \cref{thm:scc} or rather \cref{rmk:scc}.
For \cref{thm:interp2-borel:left}, essential surjectivity follows from the fact that every countable $\@L_{\omega_1\omega}$ theory has a quasi-Polish (or even zero-dimensional Polish) parametrization with $\@L_{\omega_1\omega}$ saturations by Morleyization and any of the examples from \cref{sec:param} that admit finite models, and local full faithfulness is trivial as before.
For fullness, we again follow the proof of \cref{thm:interp2}; the added ingredient needed is that (in the notation from there) surjectivity of the Borel map $\cod : \Iso_{X_1,X_2}(\@M_1, \@F^{\@M_2}) -> X_2$ is already enough to imply the existence of a \emph{Borel} section, which follows by a straigtforward application of Kechris's large section uniformization theorem \cite[18.6$^*$]{Kcdst*} applied to the meager ideal; see \cite[7.9]{Cqpol}.
\end{proof}

\section{Groupoid representations}
\label{sec:gpd}

\Cref{thm:interp2,thm:interp2-borel} show that the operation of taking the semantics of a theory is an ``embedding'' from the 2-category of theories to the 2-category of groupoids (in both the continuous and Borel settings).
It is natural to ask what is the image of this 2-functor, i.e, which quasi-Polish groupoids arise as the groupoid of models of a theory.
The corresponding question in topos theory was answered by Moerdijk~\cite{Mtop2}, who proved what can be called the topos-theoretic analog of the Yoneda lemma (which says that absent any topological structure, \emph{every} groupoid is canonically a groupoid of isomorphisms between structures, via the left translation action on itself).
In this final section, we adapt Moerdijk's result to the countable model-theoretic setting.

First, we clarify what we mean by an abstract topological groupoid (e.g., $\Iso_X(\@M)$):

\begin{definition}
\label{def:gpd-subgpd}
A \defn{topological groupoid} $G \rightrightarrows X$ consists of topological spaces $G, X$ of \emph{morphisms} and \emph{objects} respectively, with continuous \emph{domain} and \emph{codomain} maps $\dom, \cod : G \rightrightarrows X$, as well as \emph{identity} $1_{(-)} : X -> G$, \emph{inverse} $(-)^{-1} : G -> G$, and \emph{composition} $\circ$ (or simply juxtaposition) of adjacent morphisms, subject to the usual axioms of associativity, identity, and inverse.

As is common when working with topological groupoids, we identify each object $x \in X$ with the corresponding identity morphism $1_x \in G$, so that $X \subseteq G$ is a subspace.

By an \defn{open subgroupoid} of $G$, we mean an open subset $U \subseteq G$ of morphisms which is closed under composition and inverse.
It follows that for each $g : x -> y \in U$, the identity morphisms at both its source $1_x = g^{-1} \circ g$ and target $1_y = g \circ g^{-1}$ are in $U$ as well, whence we may regard $U$ as the space of morphisms of a groupoid in its own right, with objects $\{x \in X \mid 1_x \in U\} = \dom(U)$.

We call $G$ a \defn{non-Archimedean} topological groupoid if every identity morphism $1_x \in G$ has a neighborhood basis of open subgroupoids.
\end{definition}

\begin{definition}
\label{def:ogpd-coset}
Now suppose $G \rightrightarrows X$ is an \emph{open} topological groupoid, i.e., $\dom, \cod$ are open maps (see \cref{thm:estr-osat-isogpd}).
For an open subgroupoid $U \subseteq G$ as above, let (by an abuse of notation)
\begin{equation*}
G/U = \dom^{-1}(U)/U = \set[\big]{gU}{g \in G \AND \dom(g) = 1_{\dom(g)} \in U}
\end{equation*}
denote the \defn{space of left cosets} of $U$.
This is naturally an étale $G$-space over $X$ via $\cod : G/U -> X$ and the left multiplication action of $G$.
An open section $S/U \subseteq G/U$ may be identified with its lift in $G$, which is an open right-$U$-invariant subset $S \subseteq \dom^{-1}(U)$.

For two open subgroupoids $U, V \subseteq G$ and an open right-$V$-invariant $S \subseteq \dom^{-1}(V)$, we have
\begin{align}
\label{eq:ogpd-coset-rmul}
U \subseteq SS^{-1}
&\iff  \text{the right multiplication map } (-)S : G/U -> G/V \text{ is well-defined}.
\end{align}
Note also that such a right multiplication map, when defined, is clearly left-$G$-equivariant.
\end{definition}

\begin{definition}
Let $G \rightrightarrows X$ be an open non-Archimedean topological groupoid.
Fix a family $\@U$ of open subgroupoids forming a neighborhood basis for each identity morphism, and for each $U \in \@U$, an open cover $\@S_U$ of $\dom^{-1}(U)$ by open right-$U$-invariant sets (corresponding to a cover of $G/U$ by open sections).
Note that when $G$ is second-countable, $\@U$ and each $\@S_U$ may be chosen to be countable, whence each $G/U$ is a second-countable étale space over $X$.

The \defn{canonical $G$-structure} $\@M = \@M_{G,\@U,(\@S_U)_U}$ (determined by the $\@U, \@S_U$) is the multi-sorted étale structure over $X$ with a sort $G/U$ for each $U \in \@U$, and a unary function $(-)S : G/U -> G/V$ for each $U, V \in \@U$ and $S \in \@S_V$ obeying \cref{eq:ogpd-coset-rmul}.
The canonical left translation action of $G$ on each $G/U$ is an action via isomorphisms between fibers of $\@M_G$, yielding a canonical functor
\begin{align*}
\iota : G &--> \Iso_X(\@M)
\end{align*}
which is the identity on objects and sending each $g : x -> y \in G$ to its action $\@M_x \cong \@M_y$.
\end{definition}

\begin{theorem}[Moerdijk \cite{Mtop2}]
\label{thm:onagpd-dense}
For any open non-Archimedean $T_0$ topological groupoid $G$ with canonical structure $\@M = \@M_{G,\@U,(\@S_U)_U}$ as above, the canonical functor $\iota : G -> \Iso_X(\@M)$ is a topological embedding with $\cod$-fiberwise dense image.
\end{theorem}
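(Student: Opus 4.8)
The plan is to verify the three conditions defining a topological embedding (injectivity, continuity, and openness onto the image) and then the codomain-fiberwise density, all resting on one preliminary observation. That observation---call it the \emph{basis lemma}---is that every open $O \subseteq G$ is a union of open right-$U$-invariant sets with $U \in \@U$ (equivalently, the open sections of the various coset spaces $G/U$ form a basis for the topology of $G$). To prove it, I would first record that multiplication $m : G \times_X G -> G$ is open: the shear map $(g,h) |-> (gh,h)$ is a self-homeomorphism of the space of composable pairs, and post-composing with the first projection $G \times_X G -> G$---which is the pullback of the open map $\cod$ along $\dom$, hence open---expresses $m$ as a composite of open maps. Then, given $g \in O$, continuity of $m$ at $(g,1_{\dom g})$ yields an open $A \ni g$ and open $B \ni 1_{\dom g}$ with $AB \subseteq O$; shrinking $B$ to some $U \in \@U$ by non-Archimedeanness, the set $AU = m(A \times_X U)$ is open (as $m$ is open), right-$U$-invariant (since $UU = U$), and satisfies $g \in AU \subseteq O$.

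Continuity of $\iota$ I would reduce to showing each $\iota^{-1}(\brbr{U |-> V})$ is open. Since a structure isomorphism preserves sorts, $\brbr{U |-> V}$ splits as a union of same-sort pieces, and by union-preservation of $\brbr{\,\cdot |-> \cdot\,}$ it suffices to treat $\brbr{S |-> S'}$ for open sections $S,S'$ of a single $G/U$. A direct computation identifies $\iota^{-1}(\brbr{S |-> S'})$ with $\{g : gs \in S'$ for the representative $s$ of $S$ over $\dom g\}$, which is the image under the open first projection $G \times_X G -> G$ of the open set of composable pairs $(g,s)$ with $s \in S$ and $gs \in S'$; hence it is open. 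For openness onto the image I would use the identity section, namely $U$ itself viewed as a section of $G/U$: for any open right-$U$-invariant $S'$ one checks $\iota^{-1}(\brbr{U |-> S'}) = S'$, so $\brbr{U |-> S'} \cap \iota(G) = \iota(S')$. Given open $O \ni g$, the basis lemma supplies $U \in \@U$ and a right-$U$-invariant open $S' = AU$ with $g \in S' \subseteq O$; then $W := \brbr{U |-> S'}$ is an open neighborhood of $\iota(g)$ with $W \cap \iota(G) = \iota(S') \subseteq \iota(O)$, proving $\iota$ open onto its image. Injectivity then falls out: if $\iota(g) = \iota(g')$, the identity $\iota^{-1}(\brbr{U |-> S'}) = S'$ shows $g$ and $g'$ lie in exactly the same open right-$U$-invariant sets, hence (basis lemma) in exactly the same open sets of $G$, whence $g = g'$ by $T_0$.

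For the codomain-fiberwise density, fix $k : \@M_x \cong \@M_y$ and a basic neighborhood of $k$ in $\cod^{-1}(y)$; by union-preservation I may assume it has the form $\bigcap_i \brbr{S_i |-> S'_i}$ with each $S_i \in \@S_{U_i}$ a basis section (choosing each $\@S_U$ to be a basis of sections of $G/U$, which is permitted) and $k((S_i)_x) = (S'_i)_y$. The approximant will be a representative $g$ of the single coset $k(1_x U) \in (G/U)_y$ for a suitably small $U \in \@U$: I choose $U$ so that $1_x \in U \subseteq \bigcap_i S_iS_i^{-1}$ (making each right-translation function $(-)S_i : G/U -> G/U_i$ legitimate by \cref{eq:ogpd-coset-rmul}) and so that the set of objects of $U$ lies inside $\bigcap_i \cod(S_i)$ (possible since this is an open neighborhood of $x$ and $\@U$ is a neighborhood basis of $1_x$). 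The latter guarantees that any representative $g$ has $\dom g \in \cod(S_i)$, so all the relevant cosets $(S_i)_{\dom g}$ are defined---this is the step that silently controls the source of $g$, which would otherwise be unconstrained since $k$ need not respect $\dom$. The verification that $\iota(g)$ meets every constraint is then the single computation that $k$, being a structure isomorphism, commutes with the function symbol $(-)S_i$: applying it to $1_xU$ gives $k(1_xU)\cdot S_i = k((1_xU)\cdot S_i) = k((S_i)_x) = (S'_i)_y$ on one side and $(gU)\cdot S_i = g\,(S_i)_{\dom g}\,U_i$ on the other, so $g\,(S_i)_{\dom g} \in S'_i$, i.e.\ $\iota(g) \in \brbr{S_i |-> S'_i}$.

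I expect the density clause to be the main obstacle, and within it the two coupled subtleties above: that the approximant's source $\dom g$ is not determined by $k$ and must be forced into the prescribed neighborhood by shrinking the object set of $U$, and that the neighborhood's sections must first be refined to members of $\@S_U$ (using that $\brbr{\,\cdot |-> \cdot\,}$ preserves unions and that $\@S_U$ is a basis) so that the translation maps used are genuine function symbols of $\@M$ which $k$ preserves. By contrast, the embedding half is essentially bookkeeping once the basis lemma and the openness of multiplication are in hand.
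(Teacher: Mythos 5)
Your proof is correct and, at its core, runs along the same lines as the paper's: the embedding half rests on the identity $\iota^{-1}(\brbr{U/U |-> S/U}) = S$ together with the observation that the open right-$U$-invariant sets over $U \in \@U$ form a basis of $G$ (your ``basis lemma'', which the paper asserts as easily seen; your derivation via openness of multiplication is the standard one), and the density half uses the same approximant, namely a representative $g$ of the single coset $k(1_x U)$ for a sufficiently small $U \in \@U$. The differences are organizational. For continuity you compute $\iota^{-1}(\brbr{S/U |-> S'/U})$ directly as an open image under a fiberwise projection onto $G$, whereas the paper instead rewrites the whole topology of $\Iso_X(\@M)$ in terms of the identity-anchored sections \cref{eq:onagpd-dense-basis} via \cref{eq:onagpd-dense-rmul}; your route bypasses that rewriting for the embedding half, which is a genuine simplification. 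For density you verify the constraints by noting that $k$ commutes with the function symbols $(-)S_i$, while the paper argues that $gU_i$ and $f(1_x U_i)$ are left $U_i$-cosets both containing the nonempty set $gU$; these are two phrasings of the same fact, but yours forces you to first refine the given neighborhood so that its sections lie in $\@S_{U_i}$. That is the one place where your argument is formally narrower than the statement: the canonical structure is defined for an arbitrary open \emph{cover} $\@S_U$ of $G/U$ by sections, and your refinement needs each $\@S_{U_i}$ to be a \emph{basis}. This costs nothing downstream (\cref{thm:onagpd-qpol} only needs some choice of $\@U$ and $\@S_U$), and it can be repaired for general covers by taking any $S_i \in \@S_{U_i}$ through the witness point and adding the open condition $\dom g \in \cod((S_i/U_i) \cap V_i)$ to the constraints absorbed into $U$ --- the same shrinking device you already use to force $\dom g \in \bigcap_i \cod(S_i)$, and which also takes care of the $\dom^{-1}(V)$ subbasic sets that your density argument does not mention explicitly. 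Your two flagged subtleties, controlling $\dom g$ and refining to genuine function symbols, are indeed where the content of the density clause lies.
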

\begin{proof}
First, we convert to an alternate description of $\Iso_X(\@M)$.
An isomorphism $f : \@M_x -> \@M_y$ in this groupoid is determined by its values $f(gU)$ for each element $gU$ of each sort $(G/U)_x$ of $\@M_x$.
But preservation of the right multiplication maps $(-)S$ means that $f$ is in fact determined by its values on just the identity cosets $1_x U$: indeed, for any other coset $gU \in (G/U)_x$, we may find
\begin{align}
\label{eq:onagpd-dense-rmul}
g \in S \in \@S_U  \AND  1_x \in V \in \@U  \AND  V \subseteq SS^{-1}
\implies  f(gU) = f(1_x V S) = f(1_x V) S.
\end{align}
(This is the part most analogous to the Yoneda lemma.)
Moreover, these values $f(1_x U)$ must obey
\begin{equation}
\label{eq:onagpd-dense-cohere}
1_x \in U \subseteq V \in \@U  \implies  f(1_x U) V = f(1_x V),
\end{equation}
by finding $1_x \in S \in \@S_U$ and $1_x \in T \in \@S_V$ and then a neighborhood $1_x \in W \subseteq SS^{-1} \cap TT^{-1}$ in $\@U$, so that
$f(1_x U) V = f(1_x W S) V = f(1_x W) SV = f(1_x W) T = f(1_x WT) = f(1_x V)$.
In other words,
\begin{align}
\label{eq:onagpd-dense-invlim}
\{\text{isomorphisms } f : \@M_x \cong \@M_y \in \Iso_X(\@M)\}
&\cong \projlim_{1_x \in U \in \@U} (G/U)_y,
\end{align}
the set of all families $(f(1_x U) \in (G/U)_y)_{1_x \in U \in \@U}$ obeying the coherence condition \cref{eq:onagpd-dense-cohere}; it is easily checked that conversely, any such coherent family yields an isomorphism $f : \@M_x -> \@M_y$ via \cref{eq:onagpd-dense-rmul}.

Moreover, the above bijections \cref{eq:onagpd-dense-invlim} determine the topology of $\Iso_X(\@M)$ in the following sense.
For each $U \in \@U$ and open right-$U$-invariant $S \subseteq \dom^{-1}(U)$, the set
\begin{equation}
\label{eq:onagpd-dense-basis}
\brbr{U/U |-> S/U} = \set[\big]{f : \@M_x \cong \@M_y}{1_x \in U \AND f(1_x U) \subseteq S} \subseteq \Iso_X(\@M),
\end{equation}
corresponding via \cref{eq:onagpd-dense-invlim} to the open section $S/U \subseteq G/U$, is open.
We claim that these sets together with the maps $\dom, \cod : \Iso_X(\@M) \rightrightarrows X$ suffice to generate the topology of $\Iso_X(\@M)$ (from \cref{def:isogpd}).
Indeed, by \cref{eq:onagpd-dense-rmul}, a subbasic open $\brbr{S/U |-> T/U} \subseteq \Iso_X(G)$, where $S/U, T/U \subseteq G/U$ are basic open sections, may be written in terms of the sets \cref{eq:onagpd-dense-basis} as
\begin{align*}
\brbr{S/U |-> T/U} =
\bigcup_{\@U \ni V \subseteq SS^{-1}}
 \bigcup_{S' \subseteq S; T'/V \subseteq G/V; T'S' \subseteq T}
    \brbr{V/V |-> T'/V}
\end{align*}
(where $T'/V \subseteq G/V$ is an open section containing $f(1_x V)$ in \cref{eq:onagpd-dense-rmul}).

Now the $\iota$-preimage of such a set $\brbr{U/U |-> S/U}$ consists of all $g : x -> y \in G$ such that $1_x \in U$ and $gU \in S/U$, which is just the open right-$U$-invariant $S \subseteq G$.
Since $G$ is non-Archimedean, its open right-$U$-invariant sets over all $U \in \@U$ are easily seen to form a basis; thus $\iota$ is an embedding.

Finally, the image of $\iota$ is $\cod$-fiberwise dense: from above, $\Iso_X(\@M)$ has an open basis of sets
\begin{align*}
\bigcap_{i < n} \brbr{U_i/U_i |-> S_i/U_i} \cap \dom^{-1}(V) \cap \cod^{-1}(W)
\end{align*}
where $U_0, \dotsc, U_{n-1} \in \@U$, $S_i/U_i \subseteq G/U_i$ are open sections, and $V, W \subseteq X$ are open.
If such a set is nonempty in some $\cod^{-1}(y)$, then it contains some isomorphism $f : \@M_x -> \@M_y$ where $x = 1_x \in V \cap \bigcap_i U_i$, $y \in W$, and each $f(1_x U_i) \subseteq S_i$.
Let $U \in \@U$ with $1_x \in U \subseteq V \cap \bigcap_i U_i$, and pick $g \in f(1_x U)$.
Then $g : x -> y \in G$ with each $gU_i \supseteq gU \subseteq f(1_x U) \subseteq f(1_x U_i)$, whence $gU_i = f(1_x U_i)$ since both are left cosets of $U_i$, whence $\iota(g) \in \bigcap_{i < n} \brbr{U_i/U_i |-> S_i/U_i} \cap \dom^{-1}(V) \cap \cod^{-1}(y)$.
\end{proof}

\begin{theorem}
\label{thm:onagpd-qpol}
For an open non-Archimedean quasi-Polish groupoid $G$, with canonical second-countable étale structure $\@M = \@M_{G,\@U,(\@S_U)_U}$ with respect to some countable $\@U, \@S_U$ as above, the canonical functor $\iota : G -> \Iso_X(\@M)$ is a topological groupoid isomorphism.
\end{theorem}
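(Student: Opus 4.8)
The plan is to build on \cref{thm:onagpd-dense}. Since a quasi-Polish groupoid is in particular $T_0$ (by \cref{it:qpol-t0-cb-baire}), that theorem already tells us the canonical functor $\iota : G -> \Iso_X(\@M)$ is a topological embedding with $\cod$-fiberwise dense image. As $\iota$ is the identity on objects and a continuous functor, it will be a topological groupoid isomorphism as soon as we show it is \emph{surjective} on morphisms: its inverse is then automatically continuous because $\iota$ is a homeomorphism onto its (now total) image. So the entire content is to upgrade ``$\cod$-fiberwise dense'' to ``onto'', which I would do by a Baire category (Pettis-type) argument carried out inside each $\cod$-fiber. First I would record the ambient quasi-Polishness: the object space $X$ is quasi-Polish (either by definition of a quasi-Polish groupoid, or since the identities form a $\*\Pi^0_2$ subspace of $G$, being the preimage of the diagonal under the continuous map $g |-> (g, 1_{\cod(g)})$ and using that the diagonal of a second-countable $T_0$ space is $\*\Pi^0_2$). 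Hence $\Iso_X(\@M)$ is quasi-Polish by \cref{thm:isogpd-qpol}, as the canonical $\@M$ is a second-countable multi-sorted étale structure over $X$. Because $\iota$ is a topological embedding, $\iota(G) \cong G$ is a quasi-Polish subspace, so $\iota(G)$ is $\*\Pi^0_2$ in $\Iso_X(\@M)$ by \cref{it:qpol-pi02}; and $\iota(G)$ is a wide subgroupoid, being the image of a functor, hence closed under composition and inverse.

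Next I would analyze the fibers of $\cod : \Iso_X(\@M) -> X$. For each $y \in X$, the singleton $\{y\}$ is $\*\Pi^0_2$ in the second-countable $T_0$ space $X$ (it is the intersection of all basic opens through $y$ with the complements of the basic opens missing $y$), so $\cod^{-1}(y)$ is $\*\Pi^0_2$ in $\Iso_X(\@M)$ and therefore quasi-Polish by \cref{it:qpol-pi02}. The trace $\iota(G) \cap \cod^{-1}(y)$ is then a dense (by fiberwise density) $\*\Pi^0_2$ subset of the Baire space $\cod^{-1}(y)$; note it is nonempty whenever $\cod^{-1}(y)$ is, as it contains $\iota(1_y) = 1_y$.

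Now for surjectivity, fix any $f : \@M_x \cong \@M_y$ in $\Iso_X(\@M)$, with $x = \dom(f)$ and $y = \cod(f)$; I want $f \in \iota(G)$. Left composition $h |-> f \circ h$ is a homeomorphism $\cod^{-1}(x) -> \cod^{-1}(y)$, with continuous inverse $h |-> f^{-1} \circ h$, and so it carries the dense $\*\Pi^0_2$ set $\iota(G) \cap \cod^{-1}(x)$ onto the dense $\*\Pi^0_2$ subset $\{f \circ k : k \in \iota(G) \cap \cod^{-1}(x)\}$ of $\cod^{-1}(y)$. By the Baire category theorem for dense $\*\Pi^0_2$ sets (\cref{it:qpol-t0-cb-baire}) in the quasi-Polish space $\cod^{-1}(y)$, this set meets $\iota(G) \cap \cod^{-1}(y)$. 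Thus there exist $h, k \in \iota(G)$ with $f \circ k = h$, whence $f = h \circ k^{-1} \in \iota(G)$ since $\iota(G)$ is a subgroupoid. This establishes surjectivity, and therefore that $\iota$ is an isomorphism of topological groupoids.

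The main obstacle is packaged into the two structural facts that make the Pettis argument work: that the $\cod$-fibers are genuine Baire spaces (obtained by identifying them as $\*\Pi^0_2$, hence quasi-Polish, subspaces, where $T_0$ plus second countability is used to see that singletons of $X$ are $\*\Pi^0_2$), and that $\iota(G)$ is simultaneously $\*\Pi^0_2$ and a subgroupoid, so that density in each fiber can be promoted to surjectivity by composing with the fixed isomorphism $f$. Everything else is the soft functorial bookkeeping already arranged in \cref{thm:onagpd-dense}.
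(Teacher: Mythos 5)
Your proposal is correct and follows essentially the same route as the paper: Theorem \ref{thm:onagpd-dense} gives a $\cod$-fiberwise dense $\*\Pi^0_2$ subgroupoid, and surjectivity is then obtained by a Pettis-type Baire category argument in the $\cod$-fibers. The only difference is presentational — the paper invokes a groupoid version of Pettis's theorem from \cite{Cbk} as a black box, whereas you inline the standard ``two dense $\*\Pi^0_2$ sets in a quasi-Polish fiber must meet'' argument, which is exactly how that cited result is proved.
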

\begin{proof}
By the preceding theorem, we may identify $G$ with a $\cod$-fiberwise dense quasi-Polish (hence $\cod$-fiberwise dense $\*\Pi^0_2$) subgroupoid of $\Iso_X(\@M)$; since $(-)^{-1}$ is a homeomorphism, $G \subseteq \Iso_X(\@M)$ is also $\dom$-fiberwise dense $\*\Pi^0_2$.
By Pettis's theorem for the left translation action of the open quasi-Polish groupoid $G$ on the bundle $\cod : \Iso_X(\@M) -> X$ (proved the same way as Pettis's theorem for Polish groups; see \cite[4.2.8]{Cbk}), it follows that $G = GG = \Iso_X(\@M)\Iso_X(\@M) = \Iso_X(\@M)$.
\end{proof}

\begin{remark}
\label{rmk:onagpd-qpol}
We note that the last step above of applying Pettis's theorem also has precedent in topos theory, namely the ``closed subgroupoid theorem'' of Johnstone \cite{Jgpd}; see \cite{Cpettis}.
In the terminology of \cite{Mtop1}, \cite{Mtop2}, we have shown that an open quasi-Polish groupoid is non-Archimedean iff it is \emph{étale-complete}.
\end{remark}

\begin{corollary}
\label{thm:interp2-equiv}
The 2-functor from \cref{thm:interp2} is an equivalence of 2-categories
\begin{equation*}
\{\text{countable $\Pi_2$ theories}\} \simeq \{\text{open non-Archimedean quasi-Polish groupoids}\}
\end{equation*}
(with continuous functors and natural isomorphisms on the right).
\end{corollary}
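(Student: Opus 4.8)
The plan is to read off the corollary from two results already in hand: the full-and-faithfulness of the right leg of \cref{eq:interp2}, established in \cref{thm:interp2}\cref{thm:interp2:right}, and the groupoid representation of \cref{thm:onagpd-qpol}. Since \cref{thm:interp2} already shows the composite 2-functor \cref{eq:interp2-equiv} is full and faithful, only two points remain: that it corestricts to the stated target (open non-Archimedean quasi-Polish groupoids), and that it is essentially surjective onto that target.

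First I would check the image lies in the target class. For a zero-dimensional Polish (hence second-countable quasi-Polish) parametrization $p : \@M -> X$ with $\Sigma_1$ saturations, $\Iso_X(\@M)$ is quasi-Polish by \cref{thm:isogpd-qpol} and an open groupoid by \cref{thm:estr-osat-isogpd}, so it remains to note non-Archimedeanness. For this I would use the stabilizer sets $\brbr{S |-> S}$ of open sections $S \subseteq M^n_X$: on a section these are open subgroupoids (clearly closed under composition and inverse, since $g \in \brbr{S |-> S}$ forces $g$ to carry the unique $S$-point to the unique $S$-point), and by the very definition of the pointwise-convergence topology in \cref{def:isogpd}, finite intersections $\bigcap_i \brbr{S_i |-> S_i}$, intersected with $\dom^{-1}$ of an open set, form a neighborhood basis of each identity $1_x$. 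This supplies the required bases of open subgroupoids.

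The substance is essential surjectivity. Given an open non-Archimedean quasi-Polish groupoid $G \rightrightarrows X$, I would use second-countability to choose countable families $\@U$ and $(\@S_U)_U$ as in the construction preceding \cref{thm:onagpd-dense}, forming the canonical second-countable multi-sorted étale structure $\@M = \@M_{G,\@U,(\@S_U)_U}$ over $X$; here $X$ is quasi-Polish, either by hypothesis or because it is the $\*\Pi^0_2$ idempotent locus $\{g \in G \mid gg = g\} \subseteq G$ (via \cref{it:qpol-pi02}). Then \cref{thm:onagpd-qpol} yields a topological groupoid isomorphism $\iota : G \cong \Iso_X(\@M)$. To place $G$ in the essential image I must exhibit $\@M$ as a genuine parametrization of a countable $\Pi_2$ theory with $\Sigma_1$ saturations; granting the latter, \cref{thm:estr-osat-pi02} produces a $\Pi_2$ theory $\@T$, so that $(\@T,X,\@M)$ is an object of the domain of the right leg of \cref{eq:interp2} sent to $\Iso_X(\@M) \cong G$, and full-and-faithfulness of that leg identifies this groupoid (up to equivalence) with the value of \cref{eq:interp2-equiv} at $\@T$.

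The main obstacle is therefore verifying that the canonical structure $\@M$ has $\Sigma_1$ saturations. Since $G \cong \Iso_X(\@M)$ is open, the saturation of any open set is automatically open and invariant, so the task reduces to showing each such saturation is $\Sigma_1$-definable in $\@M$, and because $\Sigma_1$ formulas are closed under $\bigvee$ and saturation commutes with unions it suffices to treat a generating family of open sections. For a section $S/U \subseteq G/U$ I would compute the left-translation orbit $\Iso_X(\@M) \cdot (S/U)$ to be the family of $U$-cosets meeting $\dom^{-1}(\dom(S))$, an open invariant set determined by the open set $\dom(S) \subseteq X$, and then express membership in it by a ${\bigvee}\exists{\wedge}$ combination of equalities among the right-multiplication terms $(-)S'$ comprising the atomic diagram of $\@M$ — this is precisely the feature for which those function symbols were built into the canonical structure. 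I expect this definability bookkeeping, rather than any conceptual difficulty, to be the one genuinely laborious step; everything else is an assembly of \cref{thm:interp2}, \cref{thm:onagpd-qpol}, and \cref{thm:estr-osat-pi02}.
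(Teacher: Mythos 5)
Your proposal is correct and follows the same route as the paper, which presents this statement as an immediate corollary of \cref{thm:interp2} (full faithfulness) together with \cref{thm:onagpd-qpol} (essential surjectivity via the canonical structure); you are in fact more careful than the paper in isolating the two implicit verifications, namely non-Archimedeanness of $\Iso_X(\@M)$ via the open subgroupoids $\bigcap_i \brbr{S_i |-> S_i}$ (where one should include the case $n = 0$, i.e.\ $\brbr{V |-> V}$ for open $V \subseteq X = M^0_X$, rather than a bare $\dom^{-1}(V)$, which is not itself a subgroupoid), and the $\Sigma_1$ saturations of the canonical structure $\@M_{G,\@U,(\@S_U)_U}$. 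Your sketch of the latter does complete --- the saturation of a basic box $\prod_i S_i/U_i$ is defined by $\bigvee \exists y\, \bigwedge_i (x_i = y T_i)$, the disjunction running over $V \in \@U$ and $T_i \in \@S_{U_i}$ with $V \subseteq T_i T_i^{-1}$ and $T_i \cap \cod^{-1}(\dom V)$ contained in the right-invariant lift of $S_i$ --- but it can also be bypassed entirely: adjoin to the canonical structure a relation symbol for each saturation of a basic open set (an open $G$-invariant set, since $G = \Iso_X(\@M)$ is open), which leaves $\Iso_X(\@M) = G$ unchanged and renders every saturation atomic, hence $\Sigma_1$.
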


\begin{corollary}
\label{thm:interp2-borel-equiv}
The 2-functor from \cref{thm:interp2-borel} is an equivalence of 2-categories
\begin{equation*}
\{\text{countable $\@L_{\omega_1\omega}$ theories}\} \simeq \{\text{open non-Archimedean quasi-Polish groupoids}\}
\end{equation*}
(with Borel functors and natural isomorphisms on the right).
\end{corollary}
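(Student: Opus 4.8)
The plan is to combine the Borel full faithfulness of \cref{thm:interp2-borel} with the groupoid representation of \cref{thm:onagpd-qpol}, exactly paralleling the continuous \cref{thm:interp2-equiv}. Recall that a 2-functor is an equivalence of 2-categories precisely when it is \emph{locally} an equivalence (restricts to equivalences on hom-categories) and \emph{essentially surjective} on objects. The local equivalence is already in hand: by \cref{thm:interp2-borel}, the left leg of \cref{eq:interp2-borel} is an equivalence of 2-categories and the right leg is full and faithful, so the composite is a full and faithful 2-functor, i.e.\ restricts to equivalences on hom-categories. Hence the entire remaining task is essential surjectivity onto the open non-Archimedean quasi-Polish groupoids.

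First I would confirm that the essential image really lands among such groupoids. For any quasi-Polish parametrization $p : \@M -> X$ with $\Sigma_1$ (a fortiori $\@L_{\omega_1\omega}$) saturations, the groupoid $\Iso_X(\@M)$ is quasi-Polish by \cref{thm:isogpd-qpol} and open by \cref{thm:estr-osat-isogpd}; it is non-Archimedean because, in the pointwise-convergence topology of \cref{def:isogpd}, each identity $1_x$ has a neighborhood basis of open subgroupoids, namely the finite intersections $\bigcap_{i<k} \brbr{S_i |-> S_i}$ over open sections $S_i \subseteq M^{n_i}_X$ through $x$ (each such set consists of the isomorphisms carrying the $S_i$-point of one fiber to the $S_i$-point of another, and is thus visibly closed under composition and inverse).

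For essential surjectivity, I observe that this is a purely object-level statement, so the construction is the same one underlying \cref{thm:interp2-equiv}. Given an open non-Archimedean quasi-Polish groupoid $G \rightrightarrows X$, choose countable $\@U, (\@S_U)_U$ and form the canonical second-countable étale structure $\@M = \@M_{G,\@U,(\@S_U)_U}$; by \cref{thm:onagpd-qpol} the canonical functor is a topological groupoid isomorphism $G \cong \Iso_X(\@M)$. Granting that $\@M$ has $\Sigma_1$ saturations, \cref{thm:estr-osat-pi02} furnishes a $\Pi_2$ theory $\@T$ of its fibers, making $(\@T, X, \@M)$ a legitimate parametrized theory; since $\Sigma_1 \subseteq \@L_{\omega_1\omega}$, it is in particular a quasi-Polish parametrized $\@L_{\omega_1\omega}$ theory, and the Borel 2-functor carries it to $\Iso_X(\@M) \cong G$, exhibiting $G$ in the essential image.

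The main obstacle is exactly the verification that the canonical structure $\@M_G$ has $\Sigma_1$ saturations, since \cref{thm:onagpd-qpol} delivers only the topological isomorphism and openness of $\Iso_X(\@M_G)$, and openness alone does \emph{not} force $\Sigma_1$ saturations: the standard parametrization of \cref{ex:estr-infdec} has an open isomorphism groupoid yet only $\Sigma_2$ saturations, as $\ne$ fails to be $\Sigma_1$ there. What rescues the canonical structure is that it carries \emph{no relation symbols}, only the right-multiplication function symbols of \cref{def:ogpd-coset}: I would compute the $\Iso_X(\@M)$-saturation of a basic open section $S/U$ of a coset sort directly from the left-translation action and check that it is defined by a $\Sigma_1$ formula built from these function symbols and $=$, with no appeal to $\ne$. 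With that verification secured, packaging the local equivalence together with essential surjectivity into a genuine 2-equivalence is routine 2-categorical bookkeeping.
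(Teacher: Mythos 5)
Your proposal is correct and is exactly the paper's (largely implicit) argument: full faithfulness is \cref{thm:interp2-borel}, essential surjectivity is \cref{thm:onagpd-qpol} applied to the canonical structure $\@M_{G,\@U,(\@S_U)_U}$, and the groupoids in the image are open, quasi-Polish, and non-Archimedean for precisely the reasons you give (open subgroupoids $\bigcap_i \brbr{S_i |-> S_i}$, with the $n=0$ sections $\brbr{W |-> W} = \dom^{-1}(W) \cap \cod^{-1}(W)$ covering the $\dom^{-1}(V) \cap \cod^{-1}(W)$ neighborhoods). The one step you rightly isolate as the paper leaves it unstated --- that the canonical structure has $\Sigma_1$ saturations, so that \cref{thm:estr-osat-pi02} supplies the theory $\@T$ making $(\@T,X,\@M)$ a legitimate parametrized theory --- does go through along the lines you sketch: the saturation of a basic box $(S_1/U_1 \times_X \dotsb \times_X S_n/U_n) \cap \cod^{-1}(W)$ is defined by $\bigvee_{V} \exists y\, \bigwedge_i \paren[\big]{yS_i = x_i}$, where $y$ has sort $G/V$ and $V$ ranges over members of $\@U$ contained in $\dom^{-1}(W) \cap \bigcap_i S_iS_i^{-1}$, a $\Sigma_1$ formula in the purely functional language of the canonical structure.
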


As before (see \cref{thm:interp2}), these equivalences of 2-categories take a theory to the isomorphism groupoid of any of its parametrizations, hence are not completely canonical (depending on a choice of parametrization).
Also, in the latter result we may replace ``quasi-Polish'' with ``zero-dimensional Polish'', due to the freedom to Morleyize arbitrarily (see \cref{rmk:interp2}).

\begin{remark}
It may seem a bit strange that \cref{thm:interp2-borel,thm:interp2-borel-equiv} are mostly about the Borel setting, yet still mention quasi-Polish spaces and groupoids.
This is an instance of the subtle interactions between topological and Borel structure in the presence of a group(oid) action, as exemplified by results such as Pettis's theorem and the Becker--Kechris theorem; see \cite{Cbk} for an extended discussion on this point.

To our knowledge, it is an open problem to give a purely Borel-theoretic characterization of the standard Borel groupoids equivalent to the isomorphism groupoid of some countable $\@L_{\omega_1\omega}$ theory.
\end{remark}

\medskip
\noindent
Department of Mathematics\\
University of Michigan\\
Ann Arbor, MI, USA\\
\nolinkurl{ruiyuan@umich.edu}

\end{document}